\documentclass[11pt]{article}

\usepackage{amsmath,amssymb,amsthm,amsfonts}
\usepackage{mathtools}
\usepackage{mathrsfs}
\usepackage{bm}
\usepackage{graphicx}
\usepackage{float}
\usepackage{multirow}
\usepackage{booktabs}
\usepackage{longtable}
\usepackage{arydshln}
\usepackage{marginnote}
\usepackage{pdfcomment}
\usepackage{enumerate}
\usepackage[colorinlistoftodos]{todonotes}
\usepackage{cite}
\usepackage{hyperref}
\usepackage[a4paper,margin=2.5cm]{geometry}

\hypersetup{
	colorlinks=true,
	linkcolor=blue,
	citecolor=blue,
	urlcolor=blue
}

\allowdisplaybreaks[2]

\numberwithin{equation}{section}

\newtheorem{theorem}{Theorem}[section]
\newtheorem{lemma}[theorem]{Lemma}
\newtheorem{proposition}[theorem]{Proposition}
\newtheorem{corollary}[theorem]{Corollary}
\newtheorem{definition}[theorem]{Definition}
\newtheorem{example}[theorem]{Example}
\newtheorem{remark}[theorem]{Remark}

\newcommand{\HH}{\mathbb{H}}

\newcommand{\SR}{\mathcal{SR}}

\title{\Large\bf
	Slice Regular Composition Operators on Quaternionic Fock Spaces via Matrix Realization
	\footnotetext{\hspace{-0.35cm}
		\endgraf{\it E-mail: zhaopeng.lin@mail.dlut.edu.cn (Zhaopeng Lin)}
		\endgraf \hspace{1.1cm} {\it lyfdlut@dlut.edu.cn (Yufeng Lu)}
		\endgraf \hspace{1.1cm} {\it zuchao@dlut.edu.cn (Chao Zu)}
		\endgraf Y. Lu was supported by the National Natural Science Foundation of China
		(Grant No. 12031002). C. Zu was supported by the National Natural Science
		Foundation of China (Grant No. 12401151), and the Postdoctoral Researcher
		Foundation of China (Grant No. GZB20240100).
}}

\author{
	Zhaopeng Lin,
	Yufeng Lu,
	Chao Zu\thanks{Corresponding author}
}
\date{}

\begin{document}
	
	\maketitle
	
	\vspace{-0.8cm}
	
	\begin{center}
		\begin{minipage}{14cm}\small
{\noindent{\bf Abstract} \quad
We characterize the boundedness and compactness of slice regular
composition operators between quaternionic Fock spaces for the full range \(0<p,q<\infty\), without assuming that the composition symbol preserves a fixed complex slice.  As applications of the same method, we also obtain corresponding criteria for weighted composition operators and for products of Volterra-type integral operators with slice regular composition operators. The main tool is a fixed-slice matrix realization of the regular product, which represents slice regular composition on a fixed complex slice through a holomorphic \(2\times 2\) matrix functional calculus. This representation reveals a genuinely quaternionic rigidity phenomenon: boundedness imposes affine restrictions on the eigenvalue functions of the associated matrix symbol rather than on the original symbol itself. In particular, the original symbol need not be affine, and affine eigenvalue functions alone do not characterize boundedness.
	\endgraf
	{\bf Mathematics Subject Classification (2020).}\quad
	Primary 30G35; Secondary 47B33, 47B38.
	\endgraf
	{\bf Keywords.}\quad
	Quaternionic Fock space; slice regular composition; matrix functional calculus; Fock--Carleson measures; eigenvalue rigidity; weighted composition operators.}
		\end{minipage}
	\end{center}

	\section{Introduction} 
Composition operators form one of the fundamental classes of linear
operators on spaces of analytic functions. If \(\varphi\) is an analytic self-map of the underlying domain, the associated composition operator is
defined by
\[
C_\varphi f=f\circ\varphi.
\]
A central problem in the theory is to determine how operator-theoretic properties of \(C_\varphi\), such as boundedness, compactness, and essential norm estimates, reflect the function-theoretic and geometric properties of the symbol \(\varphi\). In this way, composition operators provide a natural link between operator theory and analytic function theory; see
\cite{MR1237406,MR1397026}.

The nature of this problem depends strongly on the underlying function 
space. On the classical Hardy and Bergman spaces over the unit disk, every
analytic self-map \(\varphi\) induces a bounded composition operator
\(C_\varphi\), by Littlewood's subordination principle. Hence the main
operator-theoretic questions in these spaces concern compactness, essential
norm estimates, Schatten-class properties, and their dependence on the
boundary behavior of \(\varphi\). In the Hardy-space theory, angular
derivatives and Nevanlinna counting functions play a central role, whereas on Bergman spaces compactness is naturally described in terms of the vanishing Carleson behavior of the pull-back measure; see
\cite{MR1575208,MR192062,MR223914,MR854144,MR881273}.

The situation on Fock spaces is quite different. Because of the Gaussian weight, holomorphic self-maps no 
longer automatically induce bounded composition operators. Instead, the 
behavior of the symbol at infinity is severely restricted. This leads to 
the  affine rigidity phenomenon. For complex Fock spaces, 
Carswell, MacCluer and Schuster determined the holomorphic symbols  inducing 
bounded and compact composition operators \cite{MR2034214}. In one complex variable, boundedness forces $
\varphi(z)=az+b$ with $ |a|<1$, 
or $
\varphi(z)=az$ with $|a|=1$, 
while compactness is equivalent to the first alternative.   

Weighted composition operators provide a natural extension. Given a weight 
\(u\) and a holomorphic self-map \(\varphi\),  one defines
\[
W_{u,\varphi}f=u\cdot(f\circ\varphi).
\]
They include both composition and multiplication operators as special 
cases, and they arise naturally in several problems  of  geometric function theory; for instance,
Brennan's conjecture is related to questions about certain weighted composition operators
\cite{MR2198381,MR2886614}.Weighted composition operators have been studied extensively on Hardy and 
Bergman spaces; see, for example, \cite{MR2078907,MR2342670}. On Fock spaces, their boundedness and 
compactness are typically characterized by Berezin-type testing quantities, 
or equivalently by Fock--Carleson measure conditions; see, for example, \cite{ueki2007weighted,MR3905527}. Special operator 
classes of weighted composition operators on Fock spaces, such as normal and
isometric operators, have also been studied; see \cite{MR3239622}.
The same approach also applies to products of Volterra-type integral
operators and composition operators. By means of a Littlewood--Paley type
norm equivalence, the boundedness and compactness of such products can be studied through estimates for weighted composition operators; see, for example, \cite{MR2957216, MR3192295}.

Quaternionic analysis provides a noncommutative analogue of one-variable
complex analysis. Let  \(\mathbb S\) be the sphere of imaginary units in
\(\mathbb H\), and, for each \(I\in\mathbb S\), set  \[
\mathbb C_I:=\{x+yI:\ x,y\in\mathbb R\}.
\]  Each \(\mathbb C_I\) is called a complex slice. The modern theory of slice regular functions, initiated
by Cullen \cite{MR173012} and developed further by Gentili and Struppa
\cite{MR2353257}, retains several basic features of complex function theory,
including power series expansions and Cauchy formulas. This theory has led to quaternionic analogues of classical spaces
of analytic functions, including Hardy spaces
\cite{MR3801294,MR3358083}, Bergman spaces \cite{MR34064751}, and Fock
spaces \cite{MR3587897}. On each fixed slice, 
these spaces are closely related to their complex counterparts, but their 
operator theory is essentially affected by the noncommutativity of 
quaternionic multiplication.

Composition operators in the quaternionic setting have been studied extensively. Ren and Wang introduced slice regular composition operators
and established, among other results, a Littlewood-type subordination
principle and dynamical consequences \cite{MR3482788}. On quaternionic Fock
spaces, Lian and Liang obtained boundedness and compactness criteria for
weighted composition operators \cite{MR4162404}. Further work has treated
differences of weighted composition operators and special classes, such as
self-adjoint, unitary, and isometric weighted 
composition operators; see, for example,
\cite{MR4384577,MR4519267,MR4544164}.

A common simplifying assumption in quaternionic composition operator
problems is that the inducing symbol preserves a fixed complex slice.
 More precisely, one assumes that there exists \(I\in\mathbb S\) such that
\[
\psi(\mathbb C_I)\subset\mathbb C_I.
\]
Under this hypothesis, slice regular composition reduces on
\(\mathbb C_I\) to ordinary holomorphic composition. Consequently, the
  problem can be treated by complex methods.

The purpose of the present paper is to remove this fixed-slice-preserving assumption. We consider arbitrary slice regular symbols
\(\psi\in\mathcal{SR}(\mathbb H)\). For such a symbol, the slice regular
composition of
\[
f(\xi)=\sum_{n=0}^{\infty}\xi^n a_n
\]
with \(\psi\) is defined by
\[
f\bullet\psi
=
\sum_{n=0}^{\infty}\psi^{\star n}a_n,
\]
where \(\star\) is the  regular product. However,  \((f\bullet\psi)_I\) cannot, in general, be expressed through
ordinary scalar compositions. This is the main obstruction to a direct application of the classical pull-back measure method.

We treat in parallel three related classes of operators. The first is the slice regular
composition operator
\[
C_\psi^\bullet f=f\bullet\psi .
\]
The second is the weighted composition operator
\[
W_{(u,\psi)}f=u\star(f\bullet\psi),
\qquad u,\psi\in\mathcal{SR}(\mathbb H).
\]
The third consists of products of Volterra-type integral operators and slice regular composition operators: 
\[
(V_{(g,\psi)}f)(z) =
\int_0^z
\bigl(g'\star(f\bullet\psi)\bigr)_I(w)\,\mathrm dw,
\qquad z\in\mathbb C_I,\quad I\in\mathbb S .
\]
Here \(g'\) denotes the slice derivative.  

Our first results give Berezin-type testing criteria for slice regular 
composition operators. Throughout the paper, \(\alpha>0\) is fixed. For
\(I\in\mathbb S\), let \(\mathrm d m_{2,I}\) denote the planar Lebesgue measure
on \(\mathbb C_I\), and let \(F_\alpha^p(\mathbb H)\) denote the quaternionic
Fock space. For
\(w\in\mathbb C_I\), let \(k_w\) denote the normalized reproducing kernel of
\(F_\alpha^2(\mathbb H)\). Define
\begin{equation}\label{eq:composition-testing-quantity}
	\mathcal B_{\psi,I}(w)
	:=
	\int_{\mathbb C_I}
	\left|(k_w\bullet\psi)_I(z)\right|^q
	\mathrm e^{-\frac{\alpha q}{2}|z|^2}
	\,\mathrm d m_{2,I}(z).
\end{equation}

\begin{theorem}\label{thm:composition-p-le-q}
	Let \(\psi\in\mathcal{SR}(\mathbb H)\) and \(0<p\le q<\infty\). Then the
	following assertions hold.
	
	\smallskip
	\noindent\textup{(i)}
	The operator $
	C_\psi^\bullet:
	F_\alpha^p(\mathbb H)\longrightarrow F_\alpha^q(\mathbb H)$ 
	is bounded if and only if, for some, equivalently for every,
	\(I\in\mathbb S\),
	\[
	\sup_{w\in\mathbb C_I}\mathcal B_{\psi,I}(w)<\infty.
	\]
	Moreover,
	\[
	\|C_\psi^\bullet\|
	\approx
	\sup_{w\in\mathbb C_I}\mathcal B_{\psi,I}(w)^{1/q} .
	\]
	
	\smallskip
	\noindent\textup{(ii)}
	The operator $
	C_\psi^\bullet:
	F_\alpha^p(\mathbb H)\longrightarrow F_\alpha^q(\mathbb H)$  is compact if and only if, for some,
	equivalently for every, \(I\in\mathbb S\),
	\[
	\mathcal B_{\psi,I}(w)\longrightarrow0
	\qquad
	\text{as }|w|\to\infty,\quad w\in\mathbb C_I.
	\]
\end{theorem}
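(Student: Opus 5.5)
The plan is to reduce everything to a fixed slice \(\mathbb C_I\) and then use the complex Fock--Carleson machinery. The abstract announces the key tool: a fixed-slice matrix realization of the regular product. Fix \(I\in\mathbb S\) and \(J\in\mathbb S\) with \(J\perp I\), and split \(f_I=F+GJ\) with \(F,G\) \(\mathbb C_I\)-valued entire functions. The realization (assumed available from the paper's preliminary section) represents \((f\bullet\psi)_I\) as a holomorphic \(2\times2\) matrix functional calculus \(f(\Psi)\) applied to a vector. Here \(\Psi\) is the matrix symbol built from the splitting \(\psi_I=\psi_1+\psi_2J\). The first step is to record two facts, both of which follow from the Representation Formula:
\[
\|f\|_{F^p_\alpha(\mathbb H)}\approx\|f_I\|_{F^p_\alpha(\mathbb C_I)}\approx \|F\|_{F^p_\alpha(\mathbb C_I)}+\|G\|_{F^p_\alpha(\mathbb C_I)}.
\]
They hold with constants independent of \(I\). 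Consequently boundedness and compactness of \(C_\psi^\bullet\) are equivalent to the same property for the slice operator \(f_I\mapsto (f\bullet\psi)_I\) from \(F^p_\alpha(\mathbb C_I;\mathbb H)\) to \(F^q_\alpha(\mathbb C_I;\mathbb H)\). Since the testing condition is stated on one slice, this reduction also gives the ``some, equivalently every'' clause once the equivalence is proved on an arbitrary slice.

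Necessity in (i) is routine. The normalized kernels satisfy \(\|k_w\|_{F^p_\alpha}\approx1\) uniformly in \(w\) for every \(p\) (Gaussian computation). So \(\mathcal B_{\psi,I}(w)^{1/q}\lesssim\|(k_w\bullet\psi)_I\|_{L^q}\lesssim\|C_\psi^\bullet\|\). For necessity in (ii), \(k_w\to0\) weakly (uniformly on compacts and bounded) as \(|w|\to\infty\). Compactness then gives \(\|C^\bullet_\psi k_w\|\to0\), which is \(\mathcal B_{\psi,I}(w)\to0\). Here I need that compact operators map bounded, locally uniformly null sequences to norm-null ones, also for \(p<1\); this follows from a normal-families argument. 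The main work is sufficiency.

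For sufficiency I expand \(f_I\) through the atomic or reproducing formula
\[
f_I(z)=\int_{\mathbb C_I}K_w(z)\,f_I(w)\,e^{-\alpha|w|^2}\,dm_{2,I}(w).
\]
For \(p\le1\) I use an atomic decomposition over a lattice, \(f_I=\sum_j k_{w_j}c_j\) with \(\|(c_j)\|_{\ell^p}\lesssim\|f\|_p\). The map \(f\mapsto f\bullet\psi\) is right \(\mathbb H\)-linear and continuous under locally uniform convergence, because power series converge in the matrix calculus. Hence \((f\bullet\psi)_I=\sum_j(k_{w_j}\bullet\psi)_Ic_j\). For \(q\ge p\) with \(p\le1\), I bound \(\|\sum\cdot\|_q^{\min(q,1)}\) by the corresponding sum and use \(\ell^p\subset\ell^q\). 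This gives \(\|(f\bullet\psi)_I\|_q\lesssim\sup_w\mathcal B^{1/q}\|f\|_p\).

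For \(1<p\le q\) a plain triangle inequality loses too much. I therefore plan to pass through the matrix realization. Using the Jordan/eigenvalue structure of \(\Psi\) (with eigenvalues \(\lambda_\pm(z)\)), I would rewrite \(\|(f\bullet\psi)_I\|_q^q\) as integrals of \(|F\circ\lambda_\pm|,|G\circ\lambda_\pm|\) against weights, and divided differences, against \(e^{-\alpha q|z|^2/2}\). This turns the problem into Fock--Carleson embeddings for pull-back measures \(\mu_\pm\). The testing with \(k_w\) controls the Berezin transform of these measures, and the standard Carleson criterion for \(p\le q\) yields boundedness. Handling the divided-difference and branching terms where the eigenvalues collide is the main obstacle. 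My first attempt there would be a subharmonicity estimate on discs, which bounds \(|f(\Psi(z))|\) by averages of \(|f_I|\) over small discs around \(\lambda_\pm(z)\). After that, a local \(L^p\to L^q\) embedding with the vanishing/uniform Berezin bound finishes the argument.

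Compactness sufficiency then follows in the usual way. I split \(\mu=\mu\chi_{|\lambda|\le R}+\mu\chi_{|\lambda|>R}\), use that the first piece gives a compact operator, and note that the tail norm is controlled by \(\sup_{|w|>R'}\mathcal B_{\psi,I}(w)\to0\).
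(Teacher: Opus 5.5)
Your necessity arguments are fine. For \(0<p\le1\) your atomic-decomposition argument is correct, and it is a genuinely simpler route than the paper's for that range; note that for \(q>1\) the inclusion you need is \(\ell^p\subset\ell^1\), not \(\ell^p\subset\ell^q\). It gives \(\|C_\psi^\bullet\|\lesssim\sup_w\mathcal B_{\psi,I}(w)^{1/q}\) with no spectral analysis at all.

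\textbf{The gap.} It lies in everything you route through the two-branch machinery: boundedness for \(1<p\le q\), and compactness. It sits in your sentence claiming that testing with \(k_w\) controls the Berezin transforms of the branch pull-back measures. On a region where branches are chosen, the first-row entries of \(\Phi_I(k_w\bullet\psi)\) are \(\sum_\sigma c_\sigma k_w(\lambda_\sigma)\) and \(\sum_\sigma d_\sigma k_{\overline w}(\lambda_\sigma)\). So \(\mathcal B_{\psi,I}\) only controls these two-term combinations, which can cancel. Meanwhile the individual coefficients \(c_\pm=\pm(P-\lambda_\mp)/\rho\) and \(d_\pm=\pm Q/\rho\) typically blow up like \(|\rho|^{-1}\) exactly where the branches collide. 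Your subharmonicity and divided-difference estimates run in the wrong direction. They bound \(|(f\bullet\psi)_I|\) from above by branch quantities. What is actually needed is an upper bound on the resulting branch measures in terms of \(\mathcal B_{\psi,I}\), that is, a lower bound of the combination by its separate terms.

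\textbf{What is missing.} Two ingredients are absent from your plan.
\begin{enumerate}
\item A decoupling estimate. The paper uses the two-kernel separation lemma to obtain \eqref{eq:composition-separated-c-infty}, which is only useful where \(|\rho|\) is bounded below.
\item Global information on \(\Psi\). A priori \(Z(\Delta)\) could be infinite and unbounded. Also, pointwise density bounds give Carleson bounds only if \(\sup_w\int \mathrm e^{-\frac{\alpha q}{2}|w-\lambda_\pm(z)|^2}\,\mathrm d m_{2,I}(z)<\infty\). This fails for general holomorphic branches, for example constant ones or \(\mathrm e^{z}\).
\end{enumerate}
The paper extracts the global information from the testing condition first, in Theorem~\ref{thm:composition-spectral-rigidity}. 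The separated bounds, combined with the entire function \(\mathcal H=-\Delta QQ^\#=c_+c_-\rho^4\), give \eqref{eq:composition-global-growth}. Jensen's formula then forces \(\Delta\) to be a polynomial of degree at most two and \(\tau\) to be affine.

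Only after this can one do the following:
\begin{enumerate}
\item Excise a small-area compact neighbourhood \(E\) of the finite set \(Z(\Delta)\), controlled by \(\sup_E\|\Phi_I(f\bullet\psi)\|\lesssim\|f\|_{F_\alpha^p(\mathbb H)}\).
\item Work on the complement with uniformly separated, asymptotically affine branches, which yields \eqref{eq:composition-Gaussian-pullback}.
\item For compactness, use \(|\lambda_\pm(z)|\to\infty\) to transfer the decay of \(\mathcal B_{\psi,I}\) to the branch densities. Constant branches are handled separately as point masses, through an invertible \(2\times2\) kernel system.
\end{enumerate}
Without this rigidity step, your outline does not close for \(p>1\) or for compactness.
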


\begin{theorem}\label{thm:composition-q-less-p}
	Let \(\psi\in\mathcal{SR}(\mathbb H)\) and \(0<q<p<\infty\). Put
	\(s=p/(p-q)\). Then the following assertions are equivalent:
	\begin{enumerate}[(i)]
		\item \(C_\psi^\bullet:F_\alpha^p(\mathbb H)\to F_\alpha^q(\mathbb H)\)
		is bounded;
		\item \(C_\psi^\bullet:F_\alpha^p(\mathbb H)\to F_\alpha^q(\mathbb H)\)
		is compact;
		\item For some, equivalently for every, \(I\in\mathbb S\),
		\[
		\mathcal B_{\psi,I}\in L^s(\mathbb C_I,\mathrm d m_{2,I}).
		\]
	\end{enumerate}
	Moreover,
	\[
	\|C_\psi^\bullet\|
	\approx
	\left(
	\int_{\mathbb C_I}
	\mathcal B_{\psi,I}(w)^s\,\mathrm d m_{2,I}(w)
	\right)^{1/(qs)}.
	\]
\end{theorem}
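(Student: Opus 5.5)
We prove Theorem~\ref{thm:composition-q-less-p} by moving everything to a fixed slice $\mathbb C_I$ and reducing to a Fock--Carleson-type problem for the $q<p$ range. The key input is the fixed-slice matrix realization announced in the abstract. Write $f_I=F+GJ$ with $F,G$ holomorphic on $\mathbb C_I$ and $J\perp I$ (splitting lemma). Then $(f\bullet\psi)_I$ is obtained by applying the holomorphic $2\times2$ functional calculus $(F,G)\mapsto (F,G)(M_\psi(z))$, where $M_\psi$ is the entire matrix symbol attached to $\psi$ on $\mathbb C_I$. We also use the standard facts that $\|f\|_{F^p_\alpha(\mathbb H)}\approx \|f_I\|_{F^p_\alpha(\mathbb C_I)}$ uniformly in $I$, and that the representation formula transfers estimates between slices. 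Together these give the ``some, equivalently every, $I$'' clause once the equivalence is proved for a single fixed $I$.

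\textbf{(ii)$\Rightarrow$(i)} is trivial.

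\textbf{(i)$\Rightarrow$(iii).} Use Luecking/Kahane-type randomization. For a separated lattice $\{w_j\}\subset\mathbb C_I$ and $c=(c_j)\in\ell^p$, the function $f_t=\sum_j c_j r_j(t)k_{w_j}$ (with Rademacher $r_j$) satisfies $\|f_t\|_p\lesssim\|c\|_{\ell^p}$ by the atomic decomposition of $F^p_\alpha$, which holds slicewise. Boundedness gives
\[
\int_0^1\int_{\mathbb C_I}\Bigl|\sum_j c_jr_j(t)(k_{w_j}\bullet\psi)_I\Bigr|^q e^{-\frac{\alpha q}{2}|z|^2}\,dm_{2,I}\,dt\lesssim\|C^\bullet_\psi\|^q\|c\|_{\ell^p}^q .
\]
The operator $f\mapsto f\bullet\psi$ is right-linear, since $\psi^{\star n}a_n$ is linear in $a_n$. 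Hence with real coefficients $c_j$, Khintchine's inequality applies pointwise in the quaternionic (i.e.\ $\mathbb R^4$-valued) sense. This yields $\sum_j |c_j|^q\mathcal B_{\psi,I}(w_j)\lesssim\|C^\bullet_\psi\|^q\|c\|_{\ell^p}^q$, where we use Khintchine's lower bound $\bigl(\sum|c_j|^2|\cdot|^2\bigr)^{q/2}\ge\ldots$ in the usual way, with the case distinction $q\ge2$ versus $q<2$. Duality of $\ell^{p/q}$ then gives $\{\mathcal B_{\psi,I}(w_j)\}\in\ell^{s}$. To pass from the lattice to the $L^s$ norm, we need a local sub-mean-value control: $\mathcal B_{\psi,I}(w)\lesssim\sup\mathcal B_{\psi,I}$ over a neighborhood, or better, domination by lattice values. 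We get this because $k_w\bullet\psi$ depends on $w$ through $e^{\alpha\langle\cdot,w\rangle}$ entire in $\bar w$, so $w\mapsto (k_w\bullet\psi)_I(z)e^{|w|^2\alpha/2}$ is (anti)holomorphic. The subharmonicity of $|\cdot|^q$ then gives $\mathcal B_{\psi,I}(w)\lesssim\int_{D(w,r)}\mathcal B_{\psi,I}\,dm$. Averaging over translated lattices then yields $\mathcal B_{\psi,I}\in L^s$.

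\textbf{(iii)$\Rightarrow$(ii).} This is the main obstacle, because $f\bullet\psi$ is not a pullback, so we cannot write $\|C_\psi^\bullet f\|_q^q=\int|f|^q\,d\mu_\psi$. Instead we write $f_I=\int_{\mathbb C_I} k_w\,\langle f,k_w\rangle\,dm$ (up to the Gaussian constant), i.e.\ the reproducing formula, and apply $\bullet\psi$ inside the integral using right-linearity and continuity of the functional calculus. For $q\le1$ we use the discrete atomic version $f=\sum_j k_{w_j}c_j$ with $\|c\|_{\ell^p}\approx\|f\|_p$, together with the $q$-triangle inequality:
\[
\|C_\psi^\bullet f\|_q^q\le\sum_j|c_j|^q\mathcal B_{\psi,I}(w_j)\le\|c\|_{\ell^p}^q\,\|\{\mathcal B_{\psi,I}(w_j)\}\|_{\ell^s}.
\]
For $q>1$ we instead use the integral formula with Hölder and Minkowski, estimating $|\langle f,k_w\rangle|$ by the Berezin-type average of $|f|e^{-\alpha|\cdot|^2/2}$. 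The local averaging property converts $\ell^s$/$L^s$ norms of $\mathcal B$ at lattice points into $\|\mathcal B_{\psi,I}\|_{L^s}$. This gives boundedness with the stated norm equivalence. For compactness, split $c$ into $|w_j|\le R$ and $|w_j|>R$. The tail operator has norm $\lesssim\bigl(\int_{|w|>R-1}\mathcal B^s\bigr)^{1/(qs)}\to0$, and the head is finite rank after truncation. Alternatively, take a bounded sequence $f_n\to0$ uniformly on compacta: the head terms $c_j^{(n)}\to0$, and since the matrix calculus is locally uniformly continuous, $C_\psi^\bullet f_n\to0$ in $F^q_\alpha$. Hence $C_\psi^\bullet$ is compact.
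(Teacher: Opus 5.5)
\textbf{Where the proposal stands.} Two of your four sub-arguments are correct, and they are softer than the paper's. For $q\ge2$ one has $\sum_j|c_j|^q|g_j|^q\le(\sum_j|c_j|^2|g_j|^2)^{q/2}$ pointwise, so Khintchine plus $\ell^{p/q}$--$\ell^s$ duality gives (i)$\Rightarrow$(iii). For $q\le1$, the $q$-triangle inequality on an atomic decomposition $f=\sum_j k_{w_j}c_j$, together with the sub-mean-value property of $\mathcal B_{\psi,I}$, gives (iii)$\Rightarrow$(i) with the upper norm bound, and your head/tail splitting then gives (ii). Your justification of the sub-mean-value property is correct: the $(1,1)$-entry of $\Phi_I(k_w\bullet\psi)(z)$ is antiholomorphic in $w$ and the $(1,2)$-entry is holomorphic in $w$.

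The other two cases do not go through. For $q<2$ the inequality goes the wrong way, $(\sum_j|c_j|^2|g_j|^2)^{q/2}\le\sum_j|c_j|^q|g_j|^q$. You flag this case but do not carry it out. Luecking's ``usual way'' needs a splitting of the $z$-integral into boundedly overlapping pieces, on each of which a single kernel dominates. For a pull-back $H\mapsto H\circ\varphi$ these pieces are $\varphi^{-1}(D(w_j,r))$, and even then one obtains $\ell^s$-bounds for local masses, not for $\mathcal B_{\psi,I}(w_j)$. For $q>1$, Minkowski gives only
$\|C_\psi^\bullet f\|_q\lesssim\int\mathcal B_{\psi,I}(w)^{1/q}|f(w)|e^{-\frac\alpha2|w|^2}\,\mathrm dm_{2,I}(w)$.
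H\"older then requires $\mathcal B_{\psi,I}\in L^{p'/q}$ with $p'/q<s$, and (iii) does not give this. So necessity for $q<2$, and sufficiency and compactness for $q>1$, remain unproved; for $1<q<2$ neither direction is established.

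\textbf{Why this is a real gap.} Your argument uses nothing about $C_\psi^\bullet$ beyond right-linearity and compact-open continuity, and for such operators the implications are false. Already on complex Fock spaces, take $1<p<\infty$ and the rank-one operator $Tf=h\langle f,g\rangle$. Then $\|Tk_w\|_q^q\approx|g(w)|^qe^{-\frac{\alpha q}{2}|w|^2}$, so the kernel test lies in $L^s$ iff $g\in F^{qs}_\alpha$, whereas $T$ is bounded iff $g\in F^{p'}_\alpha$. Since $qs>p'$ for $q>1$ and $qs<p'$ for $q<1$, sufficiency fails abstractly for $q>1$ and necessity fails abstractly for $q<1$.

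The missing ingredient is the almost-diagonal structure of $w\mapsto(k_w\bullet\psi)_I(z)$. By the two-branch formula it is a combination of Gaussian bumps centred at $\lambda_\pm(z)$ and $\overline{\lambda_\pm(z)}$, with weights $c_\pm,d_\pm$. The paper extracts the needed localization from the following ingredients:
\begin{enumerate}[(a)]
\item Theorem~\ref{thm:composition-spectral-rigidity}: $\tau$ is affine, $\Delta$ has degree at most two, and $\lambda_\pm$ are asymptotically affine, so $\lambda_\pm^{-1}(D(w,r))$ is comparable to a disc.
\item The good/singular decomposition, on whose good regions the two branches are uniformly separated.
\item The inequality $x^q+y^q\lesssim(Ax-By)_+^q+(Ay-Bx)_+^q$, which controls each $c_\pm$ individually despite possible cancellation. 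This is exactly where the Khintchine step is localized to $\lambda_{\pm,0}^{-1}(D(a_m,r))$, together with Yang's lemma.
\item Fock--Carleson theory for the measures $\Lambda_j^{c,\pm},\Lambda_j^{d,\pm}$.
\item The separate estimate $\mathcal B_E(w)\lesssim\mathcal B_E(0)e^{-c|w|^2}$ near $Z(\Delta)$.
\end{enumerate}
Your proposal would need an equivalent localization in exactly the ranges where it is currently silent.
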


The same method gives analogous criteria for weighted composition
operators. More precisely, for
\[
W_{(u,\psi)}:F_\alpha^p(\mathbb H)\to F_\alpha^q(\mathbb H),
\qquad 0<p,q<\infty,
\]
boundedness and compactness are characterized in
Theorems~\ref{thm:weighted-p-le-q} and~\ref{thm:weighted-q-less-p} by the
corresponding weighted Berezin-type testing quantity.

As a further application, the slice Littlewood--Paley norm equivalence yields
the corresponding results for products of Volterra-type integral operators
and slice regular composition operators. More precisely, for
\[
V_{(g,\psi)}:F_\alpha^p(\mathbb H)\to F_\alpha^q(\mathbb H),
\qquad 0<p,q<\infty,
\]
boundedness and compactness are characterized in
Theorems~\ref{thm:volterra-p-le-q} and~\ref{thm:volterra-q-less-p} by the
  testing quantity in \eqref{eq:volterra-testing-quantity}.
In particular, when \(\psi\) is the identity symbol, these results reduce to criteria for boundedness and compactness of the Volterra-type integral operator \(V_g\).
 
The main difficulty is to obtain a fixed-slice representation of
\(f\bullet\psi\) for general \(\psi\in \SR (\mathbb{H})\). Our approach is
based on a matrix realization of the $\star$-product. Fix \(I,J\in\mathbb S\)
with \(J\perp I\). If the splitting of \(f\)  is \(f_I=F+GJ\), define
\[
\Phi_I(f)
:=
\begin{pmatrix}
	F&G\\[2pt]
	-\,G^\#&F^\#
\end{pmatrix},
\qquad
D^\#(z):=\overline{D(\overline z)} .
\]
Then
\[
\Phi_I(f\star h)=\Phi_I(f)\Phi_I(h),
\qquad f,h\in\mathcal{SR}(\mathbb H).
\]
Thus the noncommutative slice product is transformed, on a fixed slice, 
into ordinary multiplication of holomorphic \(2\times2\) matrix-valued 
functions.

This matrix viewpoint gives a concrete interpretation of slice regular 
composition. The slice regular composition \(f\bullet\psi\) can be represented on \(\mathbb C_I\) by
applying the holomorphic functions arising from the splitting of \(f\) to the
matrix symbol \(\Phi_I(\psi)\). In this way, the noncommutative composition problem becomes
a holomorphic matrix functional-calculus problem on the fixed slice. The subsequent analysis is governed by the local spectral structure of
\( \Phi_I(\psi)\).

The matrix representation also clarifies the form of the Fock-space affine 
rigidity in the quaternionic setting. In contrast with the scalar complex case, 
the Berezin-type testing conditions do not directly force the slice regular 
symbol \(\psi\) itself to be affine. Rather, they first impose restrictions on the spectral invariants of 
the fixed-slice matrix symbol \(\Phi_I(\psi)\): the trace $\tau$ must be affine, and the discriminant $\Delta$ must have degree at most two. Consequently, the  
eigenvalue functions have affine asymptotics on unbounded regions; 
in the zero-free-discriminant case, the eigenvalue functions are globally affine. We formulate this spectral rigidity as follows. 

For a set \(E\subset\mathbb C_I\), write  $
E^\#:=\{\overline z:\ z\in E\}$. 
\begin{theorem}
	\label{thm:composition-spectral-rigidity}
Let \(0<p,q<\infty\). Fix \(I,J\in\mathbb S\) with \(J\perp I\), and write the
splitting of \(\psi\) on \(\mathbb C_I\) as
\[
\psi_I=P+QJ,
\qquad P,Q:\mathbb C_I\to\mathbb C_I \text{ entire},
\]
with \(Q\not\equiv0\).  Suppose that either  \begin{equation}\label{eq:rigidity-sup-testing-condition}
	\sup_{w\in\mathbb C_I}\mathcal B_{\psi,I}(w)<\infty,
\end{equation}
or \(0<q<p\) and, with \(s=p/(p-q)\),
\begin{equation}\label{eq:rigidity-Ls-testing-condition}
	\int_{\mathbb C_I}
	\mathcal B_{\psi,I}(w)^s\,\mathrm d m_{2,I}(w)<\infty .
\end{equation}
	Then the following assertions hold. 
	
	\smallskip
	\noindent\textup{(i)}
	The discriminant is a polynomial of degree at most two:
	\[
	\Delta(z)=d_2z^2+d_1z+d_0,
	\qquad d_0,d_1,d_2\in\mathbb R.
	\]
	Moreover, either \(\Delta\) is a nonzero constant, or \(d_2<0\).
	
	\smallskip
	\noindent\textup{(ii)}
	The trace function $\tau$ is affine:
	\[
	\tau(z)=t_1z+t_0,
	\qquad t_0,t_1\in\mathbb R.
	\]
	
	\smallskip
	\noindent\textup{(iii)}
	If \(\Delta\) is nonconstant, then there exists an unbounded simply connected 
  region $
	U $ satisfying $U=
U^\# $
	on which one may choose a holomorphic function \(\rho \) on $U$ with \(\rho^2=\Delta\),
	so that
	\begin{equation}\label{eq:composition-branch-affine-asymptotics}
		\lambda_\pm(z)
		:=\frac{\tau\pm\rho}{2} =
		a_\pm z+b_\pm+O(1/z),
		\qquad z\to\infty,
	\end{equation}
	where
	\[
	a_\pm,b_\pm\in\mathbb C_I,
	\qquad
	0<|a_\pm|\le1,
	\qquad
	|a_+|=|a_-|.
	\]
	
	\smallskip
	\noindent\textup{(iv)}
	If \(\Delta\) has no zeros on \(\mathbb C_I\), then
	\(\Delta\equiv\Delta_0\ne0\) is constant. After choosing a constant square
	root \(\rho\) of \(\Delta_0\), the global eigenvalue functions are affine:
	\[
	\lambda_\pm(z)=az+b_\pm,
	\qquad
	a,b_\pm\in\mathbb C_I,
	\qquad
	|a|\le1.
	\]
\end{theorem}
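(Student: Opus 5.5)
The plan is to turn the testing condition into growth bounds on the spectral invariants of the matrix symbol
\[
M(z):=\Phi_I(\psi)(z)=\begin{pmatrix}P&Q\\-Q^\#&P^\#\end{pmatrix},
\qquad
\tau=P+P^\#,
\qquad
\Delta=\tau^2-4\det M=(P-P^\#)^2-4QQ^\#.
\]
Both \(\tau\) and \(\Delta\) are entire and satisfy \(\tau^\#=\tau\) and \(\Delta^\#=\Delta\), so their Taylor coefficients are real. By the matrix realization, \(\Phi_I(k_w\bullet\psi)=K_w(M)\), where \(K_w\) is the holomorphic \(2\times2\) functional calculus built from the splitting of \(k_w\). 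Wherever \(\Delta(z)\neq0\) this can be written with spectral projections:
\[
K_w(M)=k_w(\lambda_+)\Pi_++k_w(\lambda_-)\Pi_-,
\]
or equivalently as the divided-difference form
\[
\frac{k_w(\lambda_+)-k_w(\lambda_-)}{\lambda_+-\lambda_-}\,M+(\cdots)I,
\]
which is symmetric in \(\lambda_\pm\) and so meaningful everywhere. The entries of \(\Phi_I(f)\) dominate \(|f_I|\) up to constants, because \(|F|^2+|G|^2=|f_I|^2\) and \(\#\) is an isometry for the Fock weight after reflecting \(z\mapsto\overline z\).

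Step 1 is to extract a pointwise lower bound. I would first show that \(|(k_w\bullet\psi)_I(z)|+|(k_w\bullet\psi)_I(\overline z)|\gtrsim\max_\pm|k_w(\lambda_\pm(z))|\) on compact-free regions away from the zeros of \(\Delta\). This uses the fact that \(\Pi_\pm\) are bounded when \(|\lambda_+-\lambda_-|\) is bounded below. Then subharmonicity of \(|f|^q e^{-\alpha q|z|^2/2}\) on small discs, together with \(\sup_w\mathcal B_{\psi,I}(w)<\infty\), gives
\[
\exp\!\Bigl(\alpha\operatorname{Re}(\overline w\lambda_\pm(z))-\tfrac{\alpha}{2}|w|^2-\tfrac{\alpha}{2}|z|^2\Bigr)\lesssim1 .
\]
Choosing \(w=\lambda_\pm(z)\) yields \(|\lambda_\pm(z)|^2\le|z|^2+C\). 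In the \(L^s\) case the same conclusion follows by averaging \(\mathcal B_{\psi,I}\) over discs in \(w\), using \(\mathcal B\in L^s\) and the subharmonic estimate. I expect Step 1 to be the main obstacle: controlling the projections near zeros of \(\Delta\), where \(\lambda_\pm\) collide. I would first try the divided-difference form together with a maximum-principle argument on small circles avoiding the finitely many zeros in each disc, so that the growth bound propagates across them.

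Step 2 obtains (i) and (ii) by Liouville-type arguments. Since \(\tau=\lambda_++\lambda_-\), we get \(|\tau(z)|\le 2|z|+C\), so \(\tau\) is affine with real coefficients. Since \(\Delta=(\lambda_+-\lambda_-)^2\), we get \(|\Delta|\lesssim1+|z|^2\), so \(\Delta\) is a real polynomial of degree at most two. For the sign condition I restrict to real \(x\). There \(P^\#(x)=\overline{P(x)}\) and \(Q^\#(x)=\overline{Q(x)}\), so
\[
\Delta(x)=-4(\operatorname{Im}P(x))^2-4|Q(x)|^2\le0 .
\]
This forces \(d_2\le0\), and if \(d_2=0\) then \(d_1=0\). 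A constant \(\Delta\equiv0\) would force \(Q=0\) on \(\mathbb R\), hence \(Q\equiv0\), contradicting the hypothesis. Thus either \(\Delta\) is a nonzero constant or \(d_2<0\).

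Step 3 proves (iii) and (iv). If \(d_2<0\), the (at most two) zeros of \(\Delta\) form a set invariant under conjugation. I take \(U\) to be the complement of a closed set containing them that is symmetric under \(z\mapsto\overline z\), chosen so that \(U\) is unbounded, simply connected and \(U=U^\#\). On \(U\) I fix \(\rho=i\sqrt{|d_2|}\,z\,(1+O(1/z))\), which is holomorphic there. Then
\[
\lambda_\pm=\frac{\tau\pm\rho}{2}=a_\pm z+b_\pm+O(1/z),
\qquad
a_\pm=\tfrac12\bigl(t_1\pm i\sqrt{|d_2|}\bigr),
\]
so \(|a_+|=|a_-|>0\). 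Step 1 gives \(|a_\pm|\le1\). For (iv): if \(\Delta\) has no zeros, then it is a zero-free polynomial, hence constant, and nonzero by Step 2. Then \(\lambda_\pm=\tfrac{t_1}{2}z+\tfrac{t_0\pm\rho}{2}\) are globally affine, and \(|a|=|t_1|/2\le1\) again by Step 1.
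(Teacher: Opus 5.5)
Steps~2 and~3 of your proposal are correct once you have the pointwise bound $|\lambda_\pm(z)|^2\le |z|^2+C$ on all of $\mathbb C_I$. Step~1, however, does not work as you plan to prove it; this is a genuine gap, though a fixable one.

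\textbf{What fails in Step~1.} The fact you invoke is false: spectral projections need not be bounded when $|\lambda_+-\lambda_-|$ is bounded below. The projections $E_\pm=\pm(\Psi-\lambda_\mp\mathrm I_2)/\rho$ measure the non-normality of $\Psi$, not the eigenvalue gap. In Example~\ref{ex:affine-branches-not-sufficient} with $m=1$, the hypotheses hold (the operator is even compact) and $\Delta\equiv-4$, yet $c_\pm=(1\pm\cos z)/2$ is unbounded. So the obstruction is not confined to neighbourhoods of $Z(\Delta)$. Patching there by a maximum principle does not address it, and at that stage you do not even know that $Z(\Delta)$ is finite.

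Your inequality $|(k_w\bullet\psi)_I(z)|+|(k_w\bullet\psi)_I(\overline z)|\gtrsim\max_\pm|k_w(\lambda_\pm(z))|$ also fails in general, because $\Phi_I(k_w\bullet\psi)\neq k_w(\Psi)$. The coefficients of $k_w$ are non-real and sit on the right. Hence $\Phi_I(k_w\bullet\psi)(z)$ has first column $k_w(\Psi(z))e_1$ and second column $k_{\overline w}(\Psi(z))e_2$, and your left-hand side is comparable to the norm of this mixed matrix. At a point $z_0$ with $Q(z_0)=0$, the matrix $\Psi(z_0)$ is diagonal. There the left-hand side only sees $|k_w(P(z_0))|$ and $|k_{\overline w}(P^\#(z_0))|$, which do not control $|k_w(P^\#(z_0))|$ uniformly in $w$.

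\textbf{The missing idea.} Work with the full matrix $k_w(\Psi(z))$ and use that the operator norm dominates the spectral radius: $\|k_w(\Psi(z))\|\ge\max_\pm|k_w(\lambda_\pm(z))|$ at every $z$. This needs no branch, no separation and no projections. All four entries are controlled by the testing quantity:
\begin{itemize}
\item $(k_w(\Psi))_{11}$ is the $\mathbb C_I$-component of $(k_w\bullet\psi)_I(z)$, and $(k_w(\Psi))_{12}$ is the $J$-component of $(k_{\overline w}\bullet\psi)_I(z)$.
\item The symmetry $\overline{\Psi(\overline z)}=\sigma\Psi(z)\sigma^{-1}$, with $\sigma=\bigl(\begin{smallmatrix}0&1\\-1&0\end{smallmatrix}\bigr)$, identifies the second row, up to conjugation and sign, with components of $(k_w\bullet\psi)_I(\overline z)$ and $(k_{\overline w}\bullet\psi)_I(\overline z)$.
\end{itemize}
The local submean estimate then gives $\max_\pm|k_w(\lambda_\pm(z))|^q\mathrm e^{-\frac{\alpha q}{2}|z|^2}\lesssim\mathcal B_{\psi,I}(w)+\mathcal B_{\psi,I}(\overline w)$ for all $z,w$. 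In the sup case, choose $w=\lambda_\pm(z)$. In the $L^s$ case, raise to the power $s$ and integrate in $w$; the Gaussian integral of $|k_w(\lambda_\pm(z))|^{qs}$ equals $\frac{2\pi}{\alpha qs}\mathrm e^{\frac{\alpha qs}{2}|\lambda_\pm(z)|^2}$. Either way you get $|\lambda_\pm(z)|^2\le|z|^2+C$ everywhere.

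\textbf{Comparison with the paper.} With this repair, your proof is a genuinely different and shorter route. The paper uses only the first-row coefficients $c_\pm,d_\pm$ on nondegenerate regions. These may vanish (indeed $c_+c_-=-QQ^\#/\Delta$), so the paper cannot bound $\lambda_\pm$ pointwise. Instead it multiplies the separated coefficient estimates into the entire function $\mathcal H=-\Delta QQ^\#=c_+c_-\rho^4$. It then averages $\log|\mathcal H|$ over circles with Jensen's formula to obtain (i) and (ii), and proves $|a_\pm|\le1$ by a Liouville argument for $\mathcal H$. In your repaired version, (i) and (ii) follow from Cauchy estimates, and $|a_\pm|\le1$ is read off directly from the asymptotics.
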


In the zero-free-discriminant case, Corollary~\ref{cor:composition-zero-free-discriminant}  gives a more explicit fixed-slice form for the symbol $\psi$. 
In particular, the symbol $\psi$ need not be affine even when the associated 
global eigenvalue functions are affine. Moreover, the spectral affine behavior in Theorem~\ref{thm:composition-spectral-rigidity} is only a necessary consequence of the testing conditions and is not sufficient for boundedness. Example~\ref{ex:affine-branches-not-sufficient} shows that two
slice regular symbols may have the same globally affine eigenvalue functions,
with strictly contractive slopes, while one induces a compact slice regular
composition operator and the other induces an unbounded one. Thus boundedness
cannot be characterized by the eigenvalue functions alone; the spectral
coefficients also play an essential role.

The main contributions of the paper are as follows.
\begin{enumerate}
	\item We develop a concrete fixed-slice matrix-functional-calculus representation of slice regular composition for arbitrary slice regular symbols.
	\item We characterize the boundedness and compactness of
	\(C_\psi^\bullet\), \(W_{(u,\psi)}\), and \(V_{(g,\psi)}\) between
	quaternionic Fock spaces in the full range \(0<p,q<\infty\), in terms of
	slice Berezin-type testing quantities.
\item We identify a genuinely quaternionic spectral rigidity phenomenon:
under either testing condition \eqref{eq:rigidity-sup-testing-condition} or
\eqref{eq:rigidity-Ls-testing-condition}, the trace, discriminant, and
eigenvalue functions of the fixed-slice matrix symbol \(\Phi_I(\psi)\) satisfy
the restrictions described in Theorem~\ref{thm:composition-spectral-rigidity}.
In particular, the eigenvalue functions exhibit affine behavior, while the
slice regular symbol \(\psi\) itself need not be affine.
\end{enumerate}

The paper is organized as follows. Section~\ref{sec:preliminaries} recalls
the necessary background on slice regular functions, quaternionic Fock
spaces, the \(\star\)-product and slice regular composition.
Section~\ref{sec:composition-operators} proves the boundedness and compactness criteria for slice regular composition operators and analyzes the spectral rigidity described above.
Section~\ref{sec:applications} contains the applications to weighted composition operators and products of Volterra-type integral operators and slice regular composition operators.  

Throughout the paper, \(A\lesssim B\) means that \(A\le CB\) for a positive
constant \(C\) independent of the test function and of the free variables
under consideration. Unless explicitly stated otherwise, the implicit
constants may depend on \(\alpha,p,q\), the fixed slice. We write \(A\approx B\) if both
\(A\lesssim B\) and \(B\lesssim A\) hold. When \(p<1\) or \(q<1\), the
notation \(\|T\|\) denotes the corresponding operator bound between
quasi-Banach  spaces. 
	\section{Preliminaries}\label{sec:preliminaries}
	
	\subsection{Quaternions and slice regular functions}
	
	Recall that the quaternionic field is denoted by
	\[
	\mathbb{H}
	=
	\{x_0+x_1\mathrm i+x_2\mathrm j+x_3\mathrm k:
	x_l\in\mathbb{R},\ 0\le l\le 3\},
	\]
	which is a four-dimensional noncommutative algebra generated by the imaginary units
	\(\mathrm i,\mathrm j,\mathrm k\), subject to the relations
	\[
	\mathrm i^2=\mathrm j^2=\mathrm k^2=-1,\qquad
	\mathrm i\mathrm j=-\mathrm j\mathrm i=\mathrm k,\qquad
	\mathrm j\mathrm k=-\mathrm k\mathrm j=\mathrm i,\qquad
	\mathrm k\mathrm i=-\mathrm i\mathrm k=\mathrm j.
	\] 
	Let
	\[
	\mathbb S:=\{I\in\HH:\ I^2=-1\},
	\]
	and, for \(I\in\mathbb S\),  set
	\[
	\mathbb C_I:=\{x+Iy:\ x,y\in\mathbb R\}.
	\]
	We write \(q=\operatorname{Re}(q)+\operatorname{Im}(q)\), \(\overline q\) for the quaternionic conjugate, and \(|q|=\sqrt{q\overline q}\) for the Euclidean norm.

	\begin{definition}\cite[Definition 2.1]{colombo2016entire}\label{def:slice-regular}
		Let \(f:\HH\to\HH\) be real differentiable. We say that \(f\) is
		\emph{slice regular} if for every \(I\in\mathbb S\), its restriction $
		f_I:=f|_{\mathbb C_I}$ 
		is holomorphic on \(\mathbb C_I\), that is,
		\[
		\overline{\partial_I}f(x+yI)
		:=
		\frac12\left(
		\frac{\partial}{\partial x}
		+
		I\frac{\partial}{\partial y}
		\right)f_I(x+yI)
		=0
		\]
		for all \(x+yI\in\mathbb C_I\).
		We denote by $
		\mathcal{SR}(\HH)$ the class of slice regular functions on \(\HH\).
	\end{definition}
	
	\begin{definition}\cite[Definition~2.2]{colombo2016entire}
		\label{def:slice-derivative}
		Let $f\in\mathcal{SR}(\mathbb H)$. The \emph{slice derivative}, denoted by $f'$, is defined by
		\[
		f'(z)
		:=
		\begin{cases}
			\partial_I f(z), & z=x+yI,\ y\neq0,\\[4pt]
			\dfrac{\partial f}{\partial x}(x), & z=x\in\mathbb R,
		\end{cases}
		\]
		where
		\[
		\partial_I f(x+yI)
		:=
		\frac12
		\left(
		\frac{\partial}{\partial x}
		-
		I\frac{\partial}{\partial y}
		\right)f_I(x+yI).
		\]
		Moreover, on each slice \(\mathbb C_I\), the operators \(\partial_I\) and 
		\(\overline{\partial_I}\) commute. Then $f'$ is also slice regular.
	\end{definition}

	\begin{lemma}[Splitting Lemma]\cite[Lemma 2.1]{colombo2016entire}\label{lem:splitting}
		Let $
		f\in\mathcal{SR}(\HH)$. 
		For every pair \(I,J\in\mathbb S\) with \(I\perp J\), there exist holomorphic functions $
		F,G:\mathbb C_I\to\mathbb C_I$ 
		such that
		\[
		f_I(z)=F(z)+G(z)J,
		\qquad z\in\mathbb C_I.
		\]
	\end{lemma}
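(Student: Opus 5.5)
Splitting Lemma: with \(I\perp J\) fixed, for every \(f\in\mathcal{SR}(\mathbb H)\) there are holomorphic \(F,G:\mathbb C_I\to\mathbb C_I\) with \(f_I=F+GJ\). I would prove it by separating the restriction \(f_I\) into its \(\mathbb C_I\)-component and its \(\mathbb C_I J\)-component, and then showing that the Cauchy--Riemann equation for \(f_I\) splits into separate Cauchy--Riemann equations for the two components.

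The first step is purely algebraic. Since \(1,I,J,IJ\) is a real basis of \(\mathbb H\), every quaternion \(q\) can be written uniquely as \(q=a+bJ\) with \(a,b\in\mathbb C_I\). To see this, write \(q=x_0+x_1I+x_2J+x_3IJ\) and put \(a=x_0+x_1I\), \(b=x_2+x_3I\). The map \(q\mapsto(a,b)\) is real-linear. Applying it pointwise to \(f_I\) defines \(F,G:\mathbb C_I\to\mathbb C_I\) with \(f_I=F+GJ\). Because \(f\) is real differentiable, \(F\) and \(G\) are real differentiable, and both decompositions commute with \(\partial/\partial x\) and \(\partial/\partial y\).

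The second step is to compute \(\overline{\partial_I}f_I\). One has
\[
\overline{\partial_I}f_I=\tfrac12\bigl(\partial_xF+I\partial_yF\bigr)+\tfrac12\bigl(\partial_xG+I\partial_yG\bigr)J .
\]
Here I only use that \(I\) multiplies from the left, so \(I(\partial_yG\,J)=(I\partial_yG)J\) by associativity. Both brackets take values in \(\mathbb C_I\). By uniqueness of the decomposition \(a+bJ\), the equation \(\overline{\partial_I}f_I=0\) is therefore equivalent to the two equations \(\tfrac12(\partial_x+I\partial_y)F=0\) and \(\tfrac12(\partial_x+I\partial_y)G=0\). Under the identification \(\mathbb C_I\cong\mathbb C\) given by \(I\mapsto i\), these are exactly the Cauchy--Riemann equations. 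Hence \(F\) and \(G\) are holomorphic on \(\mathbb C_I\); being real differentiable solutions of the Cauchy--Riemann system, they are holomorphic in the usual sense.

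The only point that needs care is the order of multiplication. The imaginary unit \(I\) in \(\overline{\partial_I}\) acts on the left, while \(J\) sits on the right. This is why the decomposition must be taken in the form \(a+bJ\) and not \(a+Jb\): with \(J\) on the right, left multiplication by \(I\) preserves each of the two summands \(\mathbb C_I\) and \(\mathbb C_I J\). Once this is observed, the rest of the argument is routine.
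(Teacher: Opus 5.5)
Your proof is correct and is the standard argument behind the Splitting Lemma (the paper does not prove it and only cites \cite[Lemma 2.1]{colombo2016entire}): you decompose $\mathbb H=\mathbb C_I\oplus\mathbb C_I J$ using the basis $1,I,J,IJ$, and you observe that $\overline{\partial_I}(F+GJ)=\overline{\partial_I}F+(\overline{\partial_I}G)J$, where $\overline{\partial_I}F$ and $\overline{\partial_I}G$ are $\mathbb C_I$-valued. One small correction to your closing remark: $J\mathbb C_I=\mathbb C_I J$ as sets, so left multiplication by $I$ preserves that summand in either notation; what actually goes wrong with $a+Jb$ is that $I(Jb)=J(-Ib)$, hence $\overline{\partial_I}(Jb)=J\,\partial_I b$, so $b$ would come out antiholomorphic (consistent with $Jb=\overline{b}J$), and this is why the holomorphic component must be written to the left of $J$.
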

	Throughout the paper, the prime notation is used in the following compatible
	sense. If \(h\in\SR(\HH)\), then \(h'\) denotes the slice derivative of \(h\). If
	\(H\) is a holomorphic \(\mathbb C_I\)-valued function on \(\mathbb C_I\), then
	\(H'\) denotes its usual complex derivative. If \(h_I=H+KJ\) is the splitting of \(h\), then
	\[
	(h')_I=H'+K'J,
	\]
	where the prime on the left denotes the slice derivative and the primes on the
	right denote the usual complex derivatives on \(\mathbb C_I\).

	\begin{proposition}[Representation Formula]\cite[Theorem 2.4]{colombo2016entire}\label{prop:representation}
		Let $
		f\in\mathcal{SR}(\HH)$. 
		For any \(J\in\mathbb S\) and any \(x\pm yJ\in\mathbb C_J\), the following identity holds for every \(q=x+yI\in\HH\):
		\[
		f(x+yI)
		=
		\frac12
		\Big[
		(1-IJ)f(x+yJ)
		+
		(1+IJ)f(x-yJ)
		\Big].
		\]
	\end{proposition}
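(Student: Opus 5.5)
We plan to reduce the formula to monomials via the power series expansion of entire slice regular functions, as recalled in the Introduction: every \(f\in\mathcal{SR}(\mathbb H)\) can be written as \(f(\xi)=\sum_{n\ge0}\xi^n a_n\) with \(a_n\in\mathbb H\), converging absolutely and locally uniformly on \(\mathbb H\). Both sides of the claimed identity are right \(\mathbb H\)-linear in \(f\): the coefficients \(a_n\) multiply on the right, while the factors \((1\mp IJ)\) multiply on the left. Local uniform convergence therefore lets us exchange the sum with the finite linear combination. So it suffices to prove the identity for \(f(\xi)=\xi^n\).

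For the monomial case, the key observation is that powers of \(x+yK\), for any \(K\in\mathbb S\), have a form that does not depend on \(K\). Since \(K^2=-1\), expanding \((x+yK)^n\) by the binomial theorem gives
\[
(x+yK)^n=\alpha_n(x,y)+K\,\beta_n(x,y),
\]
where \(\alpha_n,\beta_n\) are real polynomials independent of \(K\). From the expansion, \(\alpha_n\) is even in \(y\) and \(\beta_n\) is odd in \(y\). Hence
\[
(x+yJ)^n=\alpha_n+J\beta_n,\qquad (x-yJ)^n=\alpha_n-J\beta_n,\qquad (x+yI)^n=\alpha_n+I\beta_n .
\]
Because \(\alpha_n,\beta_n\) are real, they commute with all quaternions. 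We then compute
\[
\tfrac12\bigl[(1-IJ)(\alpha_n+J\beta_n)+(1+IJ)(\alpha_n-J\beta_n)\bigr]
=\alpha_n+\tfrac12\bigl[(1-IJ)J-(1+IJ)J\bigr]\beta_n
=\alpha_n-IJ^2\beta_n=\alpha_n+I\beta_n .
\]
This is exactly \((x+yI)^n\). Multiplying on the right by \(a_n\) and summing over \(n\) gives the Representation Formula. The case \(y=0\) is trivial, since both sides reduce to \(f(x)\).

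The only point needing care is the justification of the series expansion itself, together with the right-linearity and termwise passage to the limit. If we wanted to avoid relying on the power series, we would argue differently. We would define \(g(x+yI)\) to be the right-hand side, check directly that \(\overline{\partial_I}g_I=0\) for every \(I\) using \(\overline{\partial_J}f_J=0\), and note that \(g=f\) on \(\mathbb C_J\). Then \(f\) and \(g\) are slice regular functions that agree on a complex slice, and we would conclude \(f=g\) by the identity principle. We expect this identity-principle step to be the main obstacle in that alternative route. In the power-series route adopted here there is no real obstacle.
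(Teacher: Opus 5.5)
Your proof is correct and takes a genuinely different route from the one behind the citation. The paper gives no argument of its own here, only the reference to Colombo--Sabadini--Struppa. The standard proof of the Representation Formula in the slice regular literature is essentially the alternative you sketch. One defines $g(x+yI)$ by the right-hand side, verifies the Cauchy--Riemann equations $\overline{\partial_I}g_I=0$ on every slice from $\overline{\partial_J}f_J=0$, and concludes $g=f$ from their agreement on $\mathbb C_J$.

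The step you flag as the main obstacle in that route is in fact immediate. Agreement on $\mathbb C_J$ gives agreement on $\mathbb R\subset\mathbb C_I$, and the one-variable identity theorem applied to the splitting components of $f_I-g_I$ gives $f_I=g_I$. The real work in that route is the Cauchy--Riemann computation.

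Your power-series route is legitimate here because every $f\in\mathcal{SR}(\mathbb H)$ has an everywhere convergent expansion $f(\xi)=\sum_n\xi^na_n$, which the paper itself uses freely when defining $\star$ and $\bullet$. There is also no circularity. That expansion is obtained slice by slice, with $a_n=\frac{1}{n!}\frac{\mathrm d^n}{\mathrm dx^n}f(x)\big|_{x=0}$ read off along the real axis and hence independent of the slice. Each $f_I$ is entire, so the series has infinite radius.

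What your approach buys is a fully elementary verification that reduces everything to $(x+yK)^n=\alpha_n+K\beta_n$, with $\alpha_n$ even and $\beta_n$ odd in $y$. What it gives up is generality, since it needs one expansion centered at a real point converging on the whole domain. The identity-principle argument works unchanged for slice regular functions on arbitrary axially symmetric slice domains, where no such global expansion is available.
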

	
	\subsection{Quaternionic Fock spaces}
	
	Let \(\alpha>0\) and \(0<p<\infty\). For \(I\in\mathbb S\), let $
	\mathrm d m_{2,I}$ 
	denote the Lebesgue measure on \(\mathbb C_I\).
	
	\begin{definition}\label{def:fock-space}\cite[Definition~3.6]{MR3587897}
		Fix \(I\in\mathbb S\). The quaternionic Fock space $
		F_\alpha^p(\HH)$ 
		consists of all $
		f\in\mathcal{SR}(\HH)$ 
		such that
		\[
		\|f\|_{F_\alpha^p(\mathbb H)}^p
		:=
		\frac{\alpha p}{2\pi}
		\int_{\mathbb C_I}
		|f_I(z)|^p
		\mathrm{e}^{-\frac{\alpha p}{2}|z|^2}
		\,\mathrm d m_{2,I}(z)
		<
		\infty.
		\]
	\end{definition}
	
	The resulting space does not depend on the choice of $I$. More precisely, different choices of the slice give equivalent quasi-norms, with constants independent of the function; see \cite{MR3587897}. We shall therefore suppress the dependence on $I$ in the notation. 
	In the Hilbertian case $
	p=2$, 
	the space $
	F_\alpha^2(\HH)$ 
	is a reproducing kernel Hilbert space with kernel
	\[
	K_w(z)
	:=
	\mathrm{e}_\star^{\alpha z\overline w}
	=
	\sum_{n=0}^\infty
	\frac{\alpha^n z^n \overline w^{\,n}}{n!}.
	\]
	The normalized reproducing kernel is $
	k_w(z)
	=
	\mathrm{e}^{-\frac{\alpha}{2}|w|^2}K_w(z)$. 
	When $
	z,w\in\mathbb C_I$, 
	this reduces to the  reproducing kernel on complex Fock space.
	
	For later use,  we denote by
	$
	F_\alpha^p(\mathbb C_I)$ 
	the complex Fock space on \(\mathbb C_I\), that is, the space of all entire functions $
	F:\mathbb C_I\to\mathbb C_I$ 
	such that
	\[
	\|F\|_{F_\alpha^p(\mathbb C_I)}^p
	:=
	\frac{\alpha p}{2\pi}
	\int_{\mathbb C_I}
	|F(z)|^p
	\mathrm{e}^{-\frac{\alpha p}{2}|z|^2}\,
	\mathrm d m_{2,I}(z)
	<
	\infty.
	\]
	For \(z\in\mathbb C_I\) and \(r>0\), we write
\[
D_I(z,r):=\{w\in\mathbb C_I:\ |w-z|<r\}.
\]

	We shall use the following elementary consequence of the splitting lemma. If
	$f_I=F+GJ$, where $J\perp I$, then
	\[
	|f_I(z)|^p
	\approx
	|F(z)|^p+|G(z)|^p,
	\qquad z\in\mathbb C_I.
	\]
	Consequently,
	\begin{equation}\label{eq:splitting-fock-norm-comparison}
		\|f\|_{F_\alpha^p(\mathbb H)}^p
		\approx
		\|F\|_{F_\alpha^p(\mathbb C_I)}^p
		+
		\|G\|_{F_\alpha^p(\mathbb C_I)}^p .
	\end{equation}
	The constants depend only on $p$.

	\subsection{$\star$-product, slice regular composition, and $C^\bullet_\psi $}
	Since the pointwise product of slice regular functions is not, in general, slice regular, one replaces it by the $\star$-product. We first recall its definition in terms of power series.

	\begin{definition}\cite[Definition~1.36]{MR3013643}\label{def:slice-product}
		Let \(f,g\in\mathcal{SR}(\mathbb H)\), and write
		\[
		f(\xi)=\sum_{n=0}^\infty \xi^n a_n,
		\qquad
		g(\xi)=\sum_{n=0}^\infty \xi^n b_n.
		\]
		 The  \(\star\)-product of \(f\)
		and \(g\) is defined by
		\[
		(f\star g)(\xi)
		:=
		\sum_{n=0}^\infty \xi^n
		\left(
		\sum_{k=0}^n a_k b_{n-k}
		\right).
		\]
	\end{definition}
	
	The preceding definition admits the following fixed-slice representation; see 
	\cite[Definition~1.37]{MR3013643}. Fix \(I,J\in\mathbb S\) with 
	\(J\perp I\). If
	\[
	f_I =F +G J,
	\qquad
	g_I =H +K J,
	\]
	where $
	F,G,H,K:\mathbb C_I\to\mathbb C_I$ 
	are holomorphic. 	For a holomorphic function   \(D:\mathbb C_I\to\mathbb C_I\), we write
	\[
	D^\#(z):=\overline{D(\overline z)},
	\qquad z\in\mathbb C_I.
	\] 
	Then the restriction of \(f\star g\) to \(\mathbb C_I\) is given by
	\[
	(f\star g)_I 
	=
	\bigl(F H -G K^\# \bigr)
	+
	\bigl(F K +G H^\# \bigr)J.
	\] 
	\begin{definition}\label{def:slice-composition}\cite[Definition~4.1]{MR3482788}
		Let \(f,\psi\in\mathcal{SR}(\mathbb H)\), and write
		\[
		f(\xi)=\sum_{n=0}^{\infty} \xi^n a_n,
		\qquad \xi\in\mathbb H .
		\]
		The slice regular composition of $f$ with $\psi$ is defined by
		\[
		(f\bullet \psi)(\xi)
		:=
		\sum_{n=0}^\infty \psi^{\star n}(\xi)a_n,
		\]
		where $\psi^{\star n}$ denotes the $n$-th power of $\psi$ with respect to the
		$\star$-product, with $\psi^{\star0}\equiv1$.
	\end{definition}
	
	This is the right-coefficient convention: the coefficients $a_n$ occur
	on the right of the powers $\psi^{\star n}$. Since $f$ is entire slice
	regular, the series converges locally uniformly and defines a slice
	regular function; see
	\cite[Remark~4.2 and Proposition~4.12]{MR3482788}.
	
	Unlike the $\star$-product, slice regular composition does not admit, for
	a general symbol $\psi$, a direct scalar fixed-slice formula in terms of
	the splitting components of $f$ and $\psi$.     
If $\psi$ preserves the fixed complex slice $\mathbb C_I$, then the restriction of
slice regular composition to $\mathbb C_I$ reduces to ordinary
composition:
	\[
	(f\bullet\psi)_I
	=
	(F\circ P)+(G\circ P)J.
	\]
	In particular, if $\psi$ is intrinsic, this reduction holds
	simultaneously on every slice, and $f\bullet\psi$ agrees with the usual
	composition $f\circ\psi$ on $\mathbb H$.
	
 \begin{definition}
 	\label{def:composition-weighted-composition}
 	Let \(u,\psi\in\mathcal{SR}(\mathbb H)\). The slice regular composition
 	operator induced by \(\psi\) is defined by
 	\[
 	C^\bullet_\psi f:=f\bullet\psi,
 	\qquad f\in\mathcal{SR}(\mathbb H).
 	\]
 	The weighted slice regular composition operator induced by \(u\) and
 	\(\psi\) is defined by
 	\[
 	W_{(u,\psi)}f
 	:=
 	u\star(f\bullet\psi),
 	\qquad f\in\mathcal{SR}(\mathbb H).
 	\]
 	In particular, \(C^\bullet_\psi=W_{(1,\psi)}\).
 \end{definition}
 
 \begin{definition}
 	\label{def:volterra-composition}
 	Let \(g,\psi\in\mathcal{SR}(\mathbb H)\). The Volterra-type integral
 	operator induced by \(g\) is defined by
 	\[
 	(V _g f)(z)
 	:=
 	\int_0^z (g'\star f)_I(w)\,\mathrm dw,
 	\qquad z\in\mathbb C_I,\quad I\in\mathbb S,
 	\]
 	where the integral is taken along any piecewise \(C^1\) path in
 	\(\mathbb C_I\) joining \(0\) to \(z\). The product of the Volterra-type
 	integral operator and the slice regular composition operator is defined by
 	\[
 	V_{(g,\psi)}:=V _g C^\bullet_\psi .
 	\]
 	Equivalently, for \(f\in\mathcal{SR}(\mathbb H)\),
 	\[
 	(V_{(g,\psi)}f)(z)
 	=
 	\int_0^z
 	\bigl(g'\star(f\bullet\psi)\bigr)_I(w)\,\mathrm dw,
 	\qquad z\in\mathbb C_I,\quad I\in\mathbb S .
 	\]
 	In particular, if \(\psi\) is the identity map, then
 	\(V_{(g,\psi)}=V_g \).
 \end{definition}
The operators \(C^\bullet_\psi\), \(W_{(u,\psi)}\), \(V_g \), and
\(V_{(g,\psi)}\) are well defined and right linear. Indeed,
\(f\bullet\psi\) is slice regular, and the \(\star\)-product of slice regular
functions is slice regular. Moreover, the integrands
\(g'\star f\) and \(g'\star(f\bullet\psi)\) are slice regular, and hence their
restrictions to each slice are holomorphic. Therefore the corresponding slice
integrals are path independent and define holomorphic primitives on each slice,
normalized to vanish at \(0\). Since these primitives agree on the real axis, the
extension theorem yields unique slice regular functions on \(\mathbb H\); see \cite[Lemma~1.21]{MR3013643}.

	\subsection{Matrix realization and the testing quantity}
	
	The following matrix model follows directly from the slice-wise formula for the
	\(\star\)-product; see also \cite[Section~3.3]{MR4292259} for a closely related
	form. For an open set \(U\subset\mathbb C_I\), we write \(\mathcal O(U)\) for
	the space of holomorphic \(\mathbb C_I\)-valued functions on \(U\).
	
	In this subsection we fix $I,J\in\mathbb S$ with $J\perp I$. The map $\Phi_I$ below also depends on this auxiliary choice of $J$, but $J$ will remain fixed and will be suppressed from the notation. 	For \(f\in\mathcal{SR}(\HH)\), write \(f_I=F+GJ\). We define
	\begin{equation}\label{eq:PhiI-def}
		\Phi_I(f)(z)
		:=
		\begin{pmatrix}
			F(z)&G(z)\\[2pt]
			-\,G^\#(z)&F^\#(z)
		\end{pmatrix},
		\qquad z\in\mathbb C_I .
	\end{equation}
	
	\begin{proposition} 
		\label{thm:matrix-model}
		Fix \(I,J\in\mathbb S\) with \(I\perp J\). For
		\(f\in\mathcal{SR}(\HH)\), write \(f_I=F+GJ\), and let
		\(\Phi_I(f)\) be the matrix-valued function defined in \eqref{eq:PhiI-def}. Set
		\[
		\mathcal M_I
		:=
		\left\{
		\begin{pmatrix}
			U & V\\[2pt]
			-\,V^\# & U^\#
		\end{pmatrix}
		:\ U,V\in\mathcal O(\mathbb C_I)
		\right\}.
		\]
		Then
		\[
		\Phi_I:
		\bigl(\mathcal{SR}(\HH),\star\bigr)
		\longrightarrow
		\mathcal M_I
		\]
		is a multiplicative real-linear isomorphism onto \(\mathcal M_I\), where
		\(\mathcal M_I\) is endowed with pointwise matrix multiplication.
	\end{proposition}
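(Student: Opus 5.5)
Since \(\Phi_I\) is claimed to be a multiplicative real-linear isomorphism, the proof splits naturally into four checks: that \(\Phi_I\) is well defined with values in \(\mathcal M_I\), that it is real linear, that it is bijective onto \(\mathcal M_I\), and that it converts the \(\star\)-product into matrix multiplication.

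\emph{Well-definedness and real linearity.} Fix \(I\perp J\). By the Splitting Lemma~\ref{lem:splitting}, every \(f\in\mathcal{SR}(\HH)\) has a decomposition \(f_I=F+GJ\) with \(F,G\in\mathcal O(\mathbb C_I)\). This decomposition is unique, because \(\HH=\mathbb C_I\oplus\mathbb C_IJ\) as a direct sum of real vector spaces. Hence \(\Phi_I(f)\) is well defined. The map \(D\mapsto D^\#\) sends \(\mathcal O(\mathbb C_I)\) to itself, since \(\overline{D(\overline z)}\) is again holomorphic. So \(\Phi_I(f)\in\mathcal M_I\). Real linearity holds because \(f\mapsto(F,G)\) is real linear and \(D\mapsto D^\#\) is real linear.

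\emph{Injectivity and surjectivity.} If \(\Phi_I(f)=0\), then \(F=G=0\), so \(f_I\equiv0\). The Representation Formula (Proposition~\ref{prop:representation}) then gives \(f\equiv0\), which proves injectivity. For surjectivity, take \(U,V\in\mathcal O(\mathbb C_I)\). The function \(h:=U+VJ\) is holomorphic on \(\mathbb C_I\) in the sense of \(\overline{\partial_I}\). Right multiplication by the constant \(J\) commutes with \(\tfrac12(\partial_x+I\partial_y)\), and \(V\) takes values in \(\mathbb C_I\), so \(\overline{\partial_I}(VJ)=(\overline{\partial_I}V)J=0\). By the extension theorem \cite[Lemma~1.21]{MR3013643}, cited above for the Volterra operators, or equivalently by defining the extension through the Representation Formula, \(h\) extends uniquely to some \(f\in\mathcal{SR}(\HH)\) with \(f_I=h\). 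Then \(\Phi_I(f)\) is the given matrix.

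\emph{Multiplicativity.} Write \(f_I=F+GJ\) and \(g_I=H+KJ\). Direct matrix multiplication gives
\[
\Phi_I(f)\Phi_I(g)=\begin{pmatrix}FH-GK^\# & FK+GH^\#\\ -G^\#H-F^\#K^\# & -G^\#K+F^\#H^\#\end{pmatrix}.
\]
By the slice-wise formula for the \(\star\)-product, \((f\star g)_I=(FH-GK^\#)+(FK+GH^\#)J\). So the first row above is exactly the first row of \(\Phi_I(f\star g)\).

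It remains to see that the second row matches. Since \(\#\) is an involution and is multiplicative, \((FH-GK^\#)^\#=F^\#H^\#-G^\#K\) and \(-(FK+GH^\#)^\#=-F^\#K^\#-G^\#H\). These are exactly the second-row entries above, so \(\Phi_I(f\star g)=\Phi_I(f)\Phi_I(g)\).

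The only step I expect to need care is surjectivity, that is, the extension of a slice function \(U+VJ\) to a slice regular function on all of \(\HH\). This is handled by the cited extension lemma. Everything else is routine algebra.
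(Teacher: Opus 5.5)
Your proof is correct and follows the paper's argument: multiplicativity by direct matrix multiplication against the slice-wise $\star$-product formula, injectivity because $f_I$ determines $f$, and surjectivity via the extension lemma \cite[Lemma~1.21]{MR3013643}. You also check the second row explicitly, using that $D\mapsto D^\#$ is a multiplicative involution, and you verify uniqueness of the splitting and real linearity; these only fill in details the paper leaves to ``direct computation''.
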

	\begin{proof}

		The formula for the $\star$-product gives
		\[
		(f\star g)_I
		=
		(FH-GK^\#)+(FK+GH^\#)J
		\]
		whenever $f_I=F+GJ$ and $g_I=H+KJ$. A direct multiplication of the two matrices
		$\Phi_I(f)$ and $\Phi_I(g)$ gives exactly the matrix associated with this
		splitting of $f\star g$. Hence
		\[
		\Phi_I(f\star g)=\Phi_I(f)\Phi_I(g).
		\]

	The multiplication formula follows by direct computation from the slice
	product formula. Injectivity is immediate because the first row determines
	\(f_I\), hence \(f\). Conversely, for every pair \(F,G\in\mathcal O(\mathbb C_I)\),
	the extension theorem gives a unique slice regular extension of \(F+GJ\); see \cite[Lemma~1.21]{MR3013643}.
	Thus \(\Phi_I\) is onto \(\mathcal M_I\).
	\end{proof}
	
	Throughout the sequel, unless otherwise stated, we fix
\(I,J\in\mathbb S\) with \(J\perp I\).
Whenever slice regular functions \(g,f,\psi\) are involved, we use the
splitting notation
\begin{equation}\label{eq:basic-splitting-notation}
	f_I=F+GJ,\qquad
	\psi_I=P+QJ,
\end{equation}
where $
F,G,P,Q\in\mathcal O(\mathbb C_I)$. 

Set
\[
\Psi:=\Phi_I(\psi)
=
\begin{pmatrix}
	P&Q\\[2pt]
	-Q^\#&P^\#
\end{pmatrix}.
\]
Whenever a \(2\times2\) matrix-valued function occurs,
\(\|\cdot\|\) denotes any fixed matrix norm on
\(M_2(\mathbb C_I)\).

For later use, if \(\Theta\) is a \(2\times2\) matrix-valued function,
we write \(\Theta_{ij}\) for its \((i,j)\)-entry,
\(i,j\in\{1,2\}\). In particular,
\[
\psi_I
=
\Psi_{11}+\Psi_{12}J.
\]
	
	For each \(n\ge0\),  
	since \(\Phi_I(\psi^{\star n}) =\Psi ^n\), we have
	\begin{equation}\label{eq:Xn-Yn-def}
		(\psi^{\star n})_I  =\bigl(\Psi ^n\bigr)_{11}+ \bigl(\Psi ^n\bigr)_{12}J .
	\end{equation}
	For an entire function
	\[
	\varphi(\xi)=\sum_{n=0}^{\infty}\xi^n c_n,
	\qquad c_n\in\mathbb C_I,
	\]
	we define
	\[
	\varphi(\Psi)
	:=
	\sum_{n=0}^{\infty}\Psi^n c_n .
	\]
	Since the entries of $\Psi$ and the coefficients $c_n$ lie in  $\mathbb C_I$, this is the usual holomorphic functional calculus for the $2\times2$ matrix $\Psi$.

	For \(f\) written as in \eqref{eq:basic-splitting-notation}, we have $(f\bullet\psi)_I=	\bigl(\Phi_I(f\bullet\psi)\bigr)_{11}+	\bigl(\Phi_I(f\bullet\psi)\bigr)_{12}J$, where
	\begin{equation}\label{eq:Xf-Yf-global}
		\begin{aligned}
			\bigl(\Phi_I(f\bullet\psi)\bigr)_{11}
			&=
			\bigl(F(\Psi)\bigr)_{11}
			-
			\bigl(G^\#(\Psi)\bigr)_{12},
			\\[2pt]
			\bigl(\Phi_I(f\bullet\psi)\bigr)_{12}
			&=
			\bigl(F^\#(\Psi)\bigr)_{12}
			+
			\bigl(G(\Psi)\bigr)_{11}.
		\end{aligned}
	\end{equation}

	Indeed, write
	\begin{equation}\label{eq:FG-power-series} 
		F(\zeta)=\sum_{n=0}^{\infty}\zeta^n\alpha_n,
		\qquad
		G(\zeta)=\sum_{n=0}^{\infty}\zeta^n\beta_n,
		\qquad
		\alpha_n,\beta_n\in\mathbb C_I .
		\nonumber
	\end{equation}
	
	By \eqref{eq:Xn-Yn-def}, using \(Jc=\overline{c}J\) for \(c\in\mathbb C_I\), we obtain 
	\[
	\begin{aligned}
		(\psi^{\star n})_I(\alpha_n+\beta_nJ)
		={}&
		\left[
		\bigl(\Psi^n\bigr)_{11}\alpha_n
		-
		\bigl(\Psi^n\bigr)_{12}\overline{\beta_n}
		\right]+ 
		\left[
		\bigl(\Psi^n\bigr)_{12}\overline{\alpha_n}
		+
		\bigl(\Psi^n\bigr)_{11}\beta_n
		\right]J.
	\end{aligned}
	\]
	Consequently,
	\[
	\begin{aligned}
		(f\bullet\psi)_I
		={}&
		\sum_{n=0}^{\infty}
		\left[
		\bigl(\Psi^n\bigr)_{11}\alpha_n
		-
		\bigl(\Psi^n\bigr)_{12}\overline{\beta_n}
		\right]+
		\sum_{n=0}^{\infty}
		\left[
		\bigl(\Psi^n\bigr)_{12}\overline{\alpha_n}
		+
		\bigl(\Psi^n\bigr)_{11}\beta_n
		\right]J.
	\end{aligned}
	\]
	
	Since
	\[
	F^\#(\zeta)
	=
	\sum_{n=0}^{\infty}\zeta^n\overline{\alpha_n},
	\qquad
	G^\#(\zeta)
	=
	\sum_{n=0}^{\infty}\zeta^n\overline{\beta_n},
	\]
	this gives \eqref{eq:Xf-Yf-global}.
	
We shall use compact-open convergence only after restriction to a fixed slice.
By the representation formula, a sequence \(f_n\in\mathcal{SR}(\mathbb H)\)
converges to \(0\) locally uniformly on \(\mathbb H\) if and only if, for one, equivalently for every, splitting \( (f_n)_I=F_n+G_nJ\), both \(F_n\) and \(G_n\)
converge to \(0\) locally uniformly on \(\mathbb C_I\); see \cite[Remark~3.6]{lin}.

	We shall repeatedly use
\begin{equation}\label{eq:composition-first-row-equivalence}
	|(f\bullet\psi)_I(z)|^q
	\approx
	\left|\bigl(\Phi_I(f\bullet\psi)\bigr)_{11}(z)\right|^q
	+
	\left|\bigl(\Phi_I(f\bullet\psi)\bigr)_{12}(z)\right|^q.
\end{equation}

We shall use the following compact-open consequences repeatedly. For every compact set \(K\subset\mathbb C_I\), there exists a
constant \(C_K>0\) such that
\begin{equation}\label{eq:composition-compact-set-estimate}
	\sup_{z\in K}
	\|\Phi_I(f\bullet\psi)(z)\|
	\le
	C_K\|f\|_{F_\alpha^p(\mathbb H)},
	\qquad
	f\in F_\alpha^p(\mathbb H).
\end{equation}
Moreover, if \(f_n\to0\) uniformly on compact subsets of \(\mathbb H\)
in the Euclidean compact-open topology, then
\begin{equation}\label{eq:composition-compact-open-continuity}
	\Phi_I(f_n\bullet\psi)\longrightarrow0
	\quad\text{uniformly on }K.
\end{equation}

Indeed, by the preceding convention and equivalence, the splitting
components \(F_n\) and \(G_n\) converge to zero uniformly on compact
subsets of \(\mathbb C_I\). Hence the reflected functions
\(F_n^\#\) and \(G_n^\#\) also converge to zero uniformly on compact
subsets of \(\mathbb C_I\). Choose
\[
R>\sup_{z\in K}\|\Psi(z)\|.
\]
The Cauchy integral formula for the holomorphic matrix functional
calculus, applied on the circle \(|\zeta|=R\), together with
\eqref{eq:Xf-Yf-global} and the splitting norm comparison, gives
\eqref{eq:composition-compact-set-estimate} and
\eqref{eq:composition-compact-open-continuity}.

 
	\section{Slice Regular Composition Operators}
	\label{sec:composition-operators}
	
In this section we study the slice regular composition operator $
C_\psi^\bullet  $ 
acting between quaternionic Fock spaces, without assuming that
\(\psi\) preserves a fixed complex slice.

After fixing \(I\) and \(J\perp I\), the splitting \(\psi_I=P+QJ\) separates
the  case \(Q\equiv0\) from the  case
\(Q\not\equiv0\). In the latter case,
boundedness imposes strong necessary restrictions on the local eigenvalue branches of \(\Psi=\Phi_I(\psi)\). These restrictions are not sufficient
for boundedness, because the spectral-projection coefficients also
enter the pull-back measures; see
Example~\ref{ex:affine-branches-not-sufficient}.
	  
\subsection{The case where \(\psi\) preserves a fixed complex slice}
\label{subsec:composition-scalar-reductions}

	Fix \(I,J\in\mathbb S\) with \(J\perp I\), and write
	\begin{equation} 
		f_I=F+GJ,
		\qquad
		\psi_I=P+QJ,
		\nonumber
	\end{equation}
	where \(F,G,P,Q\in\mathcal O(\mathbb C_I)\). Set
	\[
	\Psi:=\Phi_I(\psi)
	=
	\begin{pmatrix}
		P&Q\\[2pt]
		-Q^\#&P^\#
	\end{pmatrix},
	\]
	and define
	\begin{equation}\label{eq:composition-tau-delta-Delta}
		\tau:=P+P^\#,
		\qquad
		\delta:=PP^\#+QQ^\#,
		\qquad
		\Delta:=\tau^2-4\delta.
	\end{equation}
	Equivalently,
	\begin{equation}\label{eq:composition-Delta-expanded}
		\Delta=(P-P^\#)^2-4QQ^\#.
	\end{equation}
	Notice that \(\tau^\#=\tau\) and \(\Delta^\#=\Delta\).
	
We first note that the degenerate-discriminant case is contained in the case where
the symbol preserves a fixed complex slice. 	Assume first that \(\Delta\equiv0\). Restricting
\eqref{eq:composition-Delta-expanded} to the real axis gives
\[
0
=
-4\bigl((\operatorname{Im}P(x))^2+|Q(x)|^2\bigr),
\qquad x\in\mathbb R.
\]
Hence
\[
Q(x)=0,
\qquad
P(x)\in\mathbb R,
\qquad x\in\mathbb R.
\]
Since \(Q\) and \(P-P^\#\) are holomorphic and vanish on the real
axis, the identity theorem yields
\[
Q\equiv0,
\qquad
P=P^\#.
\]
Thus \(\psi\) is intrinsic by \cite[Lemma~2.9]{lin}.

More generally, suppose that \(Q\equiv0\). Then \(\psi\) preserves the fixed
complex slice \(\mathbb C_I\), and
\[
\Psi=\operatorname{diag}(P,P^\#).
\]
In this case \eqref{eq:Xf-Yf-global} reduces to
\begin{equation}\label{eq:composition-fixed-slice-scalar-formula}
	(f\bullet\psi)_I
	=
	(F\circ P)+(G\circ P)J .
\end{equation}
Thus, the quaternionic composition problem reduces, on the fixed slice, to the scalar pull-back map induced by \(P\).

We now formulate this reduction in terms of the associated pull-back measure.
For a Borel set \(\mathcal A\subset\mathbb C_I\), define
\[
\mu_{P,I}(\mathcal A)
:=
\int_{P^{-1}(\mathcal A)}
\mathrm e^{-\frac{\alpha q}{2}|z|^2}
\,\mathrm d m_{2,I}(z),
\]
and set
\begin{equation}\label{eq:composition-scalar-weighted-pullback}
	\mathrm d\nu_{P,I}(\zeta)
	:=
	\mathrm e^{\frac{\alpha q}{2}|\zeta|^2}
	\,\mathrm d\mu_{P,I}(\zeta).
	\nonumber
\end{equation}
Consequently, the scalar composition operator
\[
C_P:
F_\alpha^p(\mathbb C_I)\longrightarrow F_\alpha^q(\mathbb C_I),
\qquad C_PH=H\circ P,
\]
is bounded if and only if \(\nu_{P,I}\) is a \((p,q)\)-Fock--Carleson
measure; similarly, \(C_P\) is compact if and only if \(\nu_{P,I}\) is a
vanishing \((p,q)\)-Fock--Carleson measure; see 
\cite[Theorems~2.6 and~2.7]{MR3248473} see, also,
\cite[Theorems~1 and~2]{ueki2007weighted}.

In particular, by \eqref{eq:composition-testing-quantity} and \eqref{eq:composition-fixed-slice-scalar-formula}, we obtain 
\begin{equation}\label{eq:composition-scalar-Berezin-reduction}
	\mathcal B_{\psi,I}(w)
	=
	\int_{\mathbb C_I}
	|k_w(\zeta)|^q
	\mathrm e^{-\frac{\alpha q}{2}|\zeta|^2}
	\,\mathrm d\nu_{P,I}(\zeta).
	\nonumber
\end{equation}
Thus \(\mathcal B_{\psi,I}\) is exactly the Berezin-type transform associated
with \(\nu_{P,I}\).

Moreover, if \(f_I=F+GJ\), then \eqref{eq:composition-fixed-slice-scalar-formula}
and the elementary splitting estimate yield
\begin{equation}\label{eq:composition-scalar-operator-reduction}
	\begin{aligned}
		&\int_{\mathbb C_I}
		|(f\bullet\psi)_I(z)|^q
		\mathrm e^{-\frac{\alpha q}{2}|z|^2}
		\,\mathrm d m_{2,I}(z)    \approx
		\int_{\mathbb C_I}
		\bigl(|F(\zeta)|^q+|G(\zeta)|^q\bigr)
		\mathrm e^{-\frac{\alpha q}{2}|\zeta|^2}
		\,\mathrm d\nu_{P,I}(\zeta).
	\end{aligned}
\end{equation}
It follows that
\begin{equation}\label{eq:composition-scalar-operator-equivalence}
	C_\psi^\bullet
	\text{ is bounded, respectively compact}
	\quad\Longleftrightarrow\quad
	C_P
	\text{ is bounded, respectively compact}.
\end{equation}
Indeed, boundedness of \(C_\psi^\bullet\) applied to the slice regular extension of
\(H\in F_\alpha^p(\mathbb C_I)\) gives boundedness of \(C_P\). Conversely,
boundedness of \(C_P\), applied separately to the splitting components \(F\) and
\(G\) in \eqref{eq:composition-scalar-operator-reduction}, together with
\eqref{eq:splitting-fock-norm-comparison}, gives boundedness of
\(C_\psi^\bullet\). The compactness equivalence follows by the same argument.

Therefore, when \(Q\equiv0\), Theorems~\ref{thm:composition-p-le-q} and
\ref{thm:composition-q-less-p}, including the norm estimates, follow from the
classical complex Fock--Carleson theorems
\cite[Theorems~2.6 and~2.7]{MR3248473}.
\begin{corollary}\label{cor:composition-scalar-affine-rigidity}
Assume that, with respect to the fixed splitting 
\(\psi_I=P+QJ\), one has \(Q\equiv0\). Equivalently,
\(\psi\) preserves the fixed complex slice \(\mathbb C_I\).
	
	\begin{enumerate}[(i)]
		\item Let \(0<p\le q<\infty\). Then $
		C_\psi^\bullet:
		F_\alpha^p(\mathbb H)\longrightarrow F_\alpha^q(\mathbb H)$ 
		is bounded if and only if    \(P(z)=az+b\) where \(|a|\le1\) and \(b=0\) whenever \(|a|=1\).
		Moreover, \(C_\psi^\bullet\) is compact if and only if  \(P(z)=az+b\) with \(|a|<1\).
		
		\item Let \(0<q<p<\infty\). Then boundedness and compactness of $
		C_\psi^\bullet:
		F_\alpha^p(\mathbb H)\longrightarrow F_\alpha^q(\mathbb H)$ 
		are equivalent, and they hold if and only if  \(P(z)=az+b\) with \(|a|<1\).
	\end{enumerate}
	
	In all the preceding formulas, \(a,b\in\mathbb C_I\). If, in
	addition, \(\Delta\equiv0\), then \(a,b\in\mathbb R\).
\end{corollary}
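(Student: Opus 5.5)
The plan is to reduce everything to the classical one-variable result of Carswell--MacCluer--Schuster and its $(p,q)$ extensions, using the reduction \eqref{eq:composition-scalar-operator-equivalence} established just above. Since $Q\equiv0$, the formula \eqref{eq:composition-fixed-slice-scalar-formula} holds. Hence $C_\psi^\bullet:F_\alpha^p(\mathbb H)\to F_\alpha^q(\mathbb H)$ is bounded, respectively compact, exactly when the scalar operator $C_P:F_\alpha^p(\mathbb C_I)\to F_\alpha^q(\mathbb C_I)$ is bounded, respectively compact. Under the identification $\mathbb C_I\cong\mathbb C$ via $x+yI\mapsto x+y\mathrm i$, the map $P$ is an ordinary entire function and $F_\alpha^p(\mathbb C_I)$ is the usual Fock space. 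So it suffices to prove the scalar statement for $C_P$.

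For the scalar statement I will invoke the known characterizations.

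\emph{Case $p\le q$.} $C_P$ is bounded iff $P(z)=az+b$ with either $|a|<1$, or $|a|=1$ and $b=0$. It is compact iff $|a|<1$. This is \cite{MR2034214} for $p=q$, and the Fock--Carleson criteria \cite[Theorems~2.6 and~2.7]{MR3248473} extend it to $p\le q$. For a self-contained check of necessity, one tests on normalized kernels $k_w$. The Berezin quantity $\mathcal B_{\psi,I}(w)$ is comparable to $\exp\!\bigl(\tfrac{\alpha q}{2}(|P(w)|^2-|w|^2)\bigr)$ up to a local averaging, via subharmonicity on discs $D_I(w,r)$. Boundedness then forces $|P(w)|\le|w|+C$, so $P$ is affine with $|a|\le1$ by Liouville. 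If $|a|=1$, a refined estimate of $|aw+b|^2-|w|^2=2\operatorname{Re}(\bar a\bar b w)+|b|^2$ along a suitable ray forces $b=0$. Compactness requires $\mathcal B\to0$, which excludes $|a|=1$. Sufficiency is a direct Gaussian integral computation.

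\emph{Case $q<p$.} By \cite[Theorem~2.7]{MR3248473}, $C_P$ is bounded iff it is compact iff $\mathcal B\in L^s$. For affine $P$ with $|a|<1$, $\mathcal B$ decays like a Gaussian, so it lies in $L^s$. When $|a|=1$ and $b=0$, $\mathcal B$ is a nonzero constant, which is not in $L^s(\mathbb C_I)$. Combined with the necessity argument above (the $L^s$ condition implies a pointwise upper bound on $\mathcal B$ after averaging), this gives the claim.

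\emph{The case $\Delta\equiv0$.} The preliminary discussion shows that $P=P^\#$, i.e.\ $\overline{a}\,\overline{z}+\overline b=a\overline z+b$ for all $z$. Hence $a,b\in\mathbb R$.

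The main obstacle is the necessity of $b=0$ when $|a|=1$ for $p\ne q$, together with the $L^s$-to-pointwise passage when $q<p$. For both I will rely on the cited complex theorems rather than reprove them; the only genuinely new input here is the reduction \eqref{eq:composition-scalar-operator-equivalence}.
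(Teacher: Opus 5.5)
Your proposal is correct and follows the paper's route: reduce via \eqref{eq:composition-scalar-operator-equivalence} to the scalar operator $C_P$ on $F_\alpha^p(\mathbb C_I)$, quote the complex affine classification (the paper cites Tien--Khoi, Corollaries~3.5 and~3.6, while you cite Carswell--MacCluer--Schuster together with the Hu--Lv Carleson criteria), and read off $a,b\in\mathbb R$ from $P=P^\#$ when $\Delta\equiv0$. Your self-contained sketch has two harmless slips: for $|a|=1$ one has $|aw+b|^2-|w|^2=2\operatorname{Re}(a\bar b w)+|b|^2$, and the kernel test gives only the one-sided bound $\exp\bigl(\tfrac{\alpha q}{2}(|P(z)|^2-|z|^2)\bigr)\lesssim\mathcal B_{\psi,I}(P(z))$, not a two-sided comparison at a single point; also, since $\sup_w\mathcal B_{\psi,I}(w)\lesssim\|C_\psi^\bullet\|^q$ holds for all $0<p,q<\infty$, no $L^s$-to-pointwise passage is needed when $q<p$.
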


\begin{proof}
	By \eqref{eq:composition-scalar-operator-equivalence},
	\(C_\psi^\bullet\) is bounded, respectively compact, if and only if $
	C_P:
	F_\alpha^p(\mathbb C_I)\longrightarrow
	F_\alpha^q(\mathbb C_I)$ has the corresponding property. The scalar 
	classification in \cite[Corollaries~3.5 and~3.6]{MR3905527}, after a  harmless 
	change of normalization, gives \textup{(i)} and \textup{(ii)}.
	
	If \(\Delta\equiv0\), then \(P=P^\#\).   Hence
	\(a,b\in\mathbb R\).
\end{proof}
In the next subsections, we work in the two-branch case. More precisely,
we keep the fixed pair \(I,J\in\mathbb S\), \(J\perp I\), and the splitting
\[
\psi_I=P+QJ,\qquad P,Q\in\mathcal O(\mathbb C_I),
\]
introduced above, and assume
\begin{equation}\label{eq:composition-genuine-case}
	Q\not\equiv0.
\end{equation}
Then \(\Delta\not\equiv0\). All quantities
\(\Psi,\tau,\Delta,\lambda_\pm,c_\pm,d_\pm\) are understood with respect to this
fixed splitting.
	
\subsection{The local two-branch representation}
	\label{subsec:composition-two-branch-formula}
	
	Set
	\[
	\mathcal N:=\{z\in\mathbb C_I:\Delta(z)\ne0\}.
	\]
A \emph{nondegenerate local region} is a simply connected domain
\(U\subset\mathcal N\) satisfying \(U=U^\#\).
	
	Let \(U\) be a nondegenerate local region. Choose
	\[
	\rho\in\mathcal O(U),
	\qquad
	\rho^2=\Delta,
	\]
	and set
	\[
	\lambda_\pm:=\frac{\tau\pm\rho}{2}.
	\]
We call \(\lambda_\pm\) the local eigenvalue branches of
\(\Psi\) on \(U\). If a pair of such branches can be chosen holomorphically on
the whole fixed slice \(\mathbb C_I\), we call them global eigenvalue functions.
	
	The corresponding spectral projections are
	\begin{equation}\label{eq:composition-spectral-projections}
		E_+
		:=
		\frac{\Psi-\lambda_-\mathrm I_2}{\rho},
		\qquad
		E_-
		:=
		\frac{\Psi-\lambda_+\mathrm I_2}{-\rho}
		=
		\mathrm I_2-E_+.
	\end{equation}
	Define
	\begin{equation}\label{eq:composition-local-coefficients}
		c_\pm:=(E_\pm)_{11},
		\qquad
		d_\pm:=(E_\pm)_{12}.
	\end{equation}
 	The holomorphic functional calculus gives
 \[
 H(\Psi)=H(\lambda_+)E_++H(\lambda_-)E_-
 \]
 for every entire \(\mathbb C_I\)-valued function \(H\). Substituting this
 identity into \eqref{eq:Xf-Yf-global} gives
 \eqref{eq:composition-local-four-term-11} and
 \eqref{eq:composition-local-four-term-12}.

		Let \(U\) be a nondegenerate local region. Then, on \(U\),
		\begin{equation}\label{eq:composition-local-four-term-11}
			\bigl(\Phi_I(f\bullet\psi)\bigr)_{11}
			=
			\sum_{\sigma=\pm}
			\left[
			c_\sigma F(\lambda_\sigma)
			-d_\sigma G^\#(\lambda_\sigma)
			\right],
		\end{equation}
		and
		\begin{equation}\label{eq:composition-local-four-term-12}
			\bigl(\Phi_I(f\bullet\psi)\bigr)_{12}
			=
			\sum_{\sigma=\pm}
			\left[
			d_\sigma F^\#(\lambda_\sigma)
			+c_\sigma G(\lambda_\sigma)
			\right].
		\end{equation}

	\begin{remark} 
		\label{rem:composition-branch-free-kernel}
		For \(z,w\in\mathbb C_I\), the matrix functional calculus gives
		\begin{equation}\label{eq:composition-kernel-matrix-exponential}
			\begin{aligned}
				\bigl(\Phi_I(k_w\bullet\psi)\bigr)_{11}(z)
				&=
				\mathrm e^{-\frac{\alpha}{2}|w|^2}
				\left(\exp\bigl(\alpha\overline w\,\Psi(z)\bigr)\right)_{11},\\[2pt]
				\bigl(\Phi_I(k_w\bullet\psi)\bigr)_{12}(z)
				&=
				\mathrm e^{-\frac{\alpha}{2}|w|^2}
				\left(\exp\bigl(\alpha w\,\Psi(z)\bigr)\right)_{12}.
			\end{aligned}
		\end{equation}
		The Cayley--Hamilton identity
		\[
		\left(\Psi-\frac{\tau}{2}\mathrm I_2\right)^2
		=
		\frac{\Delta}{4}\mathrm I_2
		\]
		therefore yields global formulas involving only the entire functions
		\[
		\mathcal C(\zeta):=\sum_{m=0}^\infty\frac{\zeta^m}{(2m)!},
		\qquad
		\mathcal S(\zeta):=\sum_{m=0}^\infty\frac{\zeta^m}{(2m+1)!}.
		\]
		In particular,
		\begin{align*}
			\bigl(\Phi_I(k_w\bullet\psi)\bigr)_{11}(z)
			={}&
			\mathrm e^{-\frac{\alpha}{2}|w|^2+
				\frac{\alpha\overline w}{2}\tau(z)}
			\Bigg[
			\mathcal C\!\left(\frac{\alpha^2\overline w^{\,2}\Delta(z)}{4}\right)\\
			&\hspace{3.7cm}
			+\alpha\overline w\,
			\mathcal S\!\left(\frac{\alpha^2\overline w^{\,2}\Delta(z)}{4}\right)
			\left(P(z)-\frac{\tau(z)}2\right)
			\Bigg],\\
			\bigl(\Phi_I(k_w\bullet\psi)\bigr)_{12}(z)
			={}&
			\mathrm e^{-\frac{\alpha}{2}|w|^2+
				\frac{\alpha w}{2}\tau(z)}
			\alpha w\,
			\mathcal S\!\left(\frac{\alpha^2w^2\Delta(z)}{4}\right)Q(z).
		\end{align*}
		These formulas are used near the zero set of \(\Delta\), where a holomorphic
		square-root branch need not exist.
	\end{remark}
	
	\begin{example}\label{ex:psi-iz-plus-j}
		Fix \(I,J\in\mathbb S\) with \(I\perp J\), and let
		\[
		\psi_I(z)=Iz+J.
		\]
		Then
		\[
		\Psi(z)=
		\begin{pmatrix}
			Iz&1\\
			-1&-Iz
		\end{pmatrix},
		\qquad
		\Psi(z)^2=-(z^2+1)\mathrm I_2.
		\]
		Hence
		\[
		\tau\equiv0,
		\qquad
		\Delta(z)=-4(z^2+1).
		\]
		The local eigenvalue branches are therefore
		\[
		\lambda_\pm(z)=\pm I\sqrt{z^2+1},
		\]
		and cannot be chosen holomorphically across the points \(z=\pm I\).
		Nevertheless, the  formulas \eqref{eq:composition-kernel-matrix-exponential}  remain holomorphic
		across these points. This elementary example motivates the
		good/singular decomposition introduced below.
	\end{example}
	
\subsection{Rigidity of eigenvalue branches}
	\label{subsec:composition-rigidity}
The rigidity obtained in this subsection is necessary but not
sufficient for boundedness. Indeed, in the case
\(Q\not\equiv0\), the fixed-slice formulas
\eqref{eq:composition-local-four-term-11} and
\eqref{eq:composition-local-four-term-12} reduce the slice regular operator problem
locally to differences of complex weighted composition operators induced by the local eigenvalue branches \(\lambda_+\) and \(\lambda_-\). Thus the  boundedness also depends on the spectral coefficients
\(c_\pm\) and \(d_\pm\).
Consequently, even when both eigenvalue branches are globally defined affine functions
with slopes of modulus strictly less than one, rapid growth of the
spectral coefficients may prevent \(C_\psi^\bullet\) from being
bounded; see Example~\ref{ex:affine-branches-not-sufficient}.

	For a holomorphic function \(H\) on a nondegenerate local region \(U\) and a
	holomorphic map \(\lambda:U\to\mathbb C_I\), set
	\begin{equation}\label{eq:composition-M-def}
		M_z(H,\lambda)
		:=
		|H(z)|
		\exp\!\left(
		\frac\alpha2\bigl(|\lambda(z)|^2-|z|^2\bigr)
		\right). 
	\end{equation}
	We also write
	\begin{equation}\label{eq:composition-separation-def}
		d(u,v):=\frac{|u-v|^2}{2(2+|u-v|^2)}.
		\nonumber
	\end{equation}
 
		Define
	\begin{equation}\label{eq:composition-B-infty}
		\mathfrak B_\infty
		:=
		\left(
		\sup_{w\in\mathbb C_I}\mathcal B_{\psi,I}(w)
		\right)^{1/q},\nonumber
	\end{equation}
	and, when \(0<q<p<\infty\),
	\begin{equation}\label{eq:composition-B-s}
		s:=\frac{p}{p-q},
		\qquad
		\mathfrak B_s
		:=
		\left(
		\int_{\mathbb C_I}\mathcal B_{\psi,I}(w)^s
		\,\mathrm d m_{2,I}(w)
		\right)^{1/(qs)}.
		\nonumber
	\end{equation}
	
	We next introduce two global entire functions:
\[
\mathcal H:=-\Delta QQ^\#,
\qquad
\mathcal L:=-\Delta Q^2.
\]
Under \eqref{eq:composition-genuine-case}, neither function is
identically zero. Moreover, on every nondegenerate local region,
\begin{equation}\label{eq:HL-local-restriction}
	\mathcal H=c_+c_-\rho^4,
	\qquad
	\mathcal L=d_+d_-\rho^4.
\end{equation}

	\begin{proof}[Proof of Theorem~\ref{thm:composition-spectral-rigidity}]
		Fix a nondegenerate local region \(U\), a branch \(\rho^2=\Delta\), and
		\(z\in U\). For fixed \(w\in\mathbb C_I\), by the  local estimate for   holomorphic functions, see  \cite[Lemma~2.32]{MR2934601}, for a fixed \(r\in(0,1]\),
		\[
		\left|\bigl(\Phi_I(k_w\bullet\psi)\bigr)_{11}(z)\right|^q
		\mathrm e^{-\frac{\alpha q}{2}|z|^2}
		\lesssim
		\int_{D_I(z,r)}
		\left|\bigl(\Phi_I(k_w\bullet\psi)\bigr)_{11}(\zeta)\right|^q
		\mathrm e^{-\frac{\alpha q}{2}|\zeta|^2}
		\,\mathrm d m_{2,I}(\zeta).
		\]
		By \eqref{eq:composition-first-row-equivalence} and
		\eqref{eq:composition-testing-quantity},
		\begin{equation}\label{eq:composition-local-pointwise-test}
			\left|\bigl(\Phi_I(k_w\bullet\psi)\bigr)_{11}(z)\right|^q
			\mathrm e^{-\frac{\alpha q}{2}|z|^2}
			\lesssim
			\mathcal B_{\psi,I}(w).
			\nonumber
		\end{equation}
		Since
		\[
		\bigl(\Phi_I(k_w\bullet\psi)\bigr)_{11}(z)
		=
		\sum_{\sigma=\pm}c_\sigma(z)k_w(\lambda_\sigma(z)),
		\]
		we obtain
		\begin{equation}\label{eq:composition-local-kernel-test-c}
			\left|
			\sum_{\sigma=\pm}
			\overline{c_\sigma(z)}K_{\lambda_\sigma(z)}(w)
			\right|^q
			\mathrm e^{-\frac{\alpha q}{2}|w|^2}
			\lesssim
			\mathrm e^{\frac{\alpha q}{2}|z|^2}
			\mathcal B_{\psi,I}(w).
		\end{equation}
		Applying the same argument to the \((1,2)\)-entry and then replacing
		\(w\) by \(\overline w\), one obtains the analogous estimate with
		\(d_\sigma\) in place of \(c_\sigma\).
		
		Assume first that \(\mathfrak B_\infty<\infty\). Taking the supremum over
		\(w\) in \eqref{eq:composition-local-kernel-test-c}, and using the
		two-kernel separation estimate \cite[Lemma~2.1]{MR3895397}, gives
		\begin{equation}\label{eq:composition-separated-c-infty}
			d(\lambda_+(z),\lambda_-(z))
			\sum_{\sigma=\pm}M_z(c_\sigma,\lambda_\sigma)
			\lesssim
			\mathfrak B_\infty.
		\end{equation}
		Similarly,
		\begin{equation}\label{eq:composition-separated-d-infty}
			d(\lambda_+(z),\lambda_-(z))
			\sum_{\sigma=\pm}M_z(d_\sigma,\lambda_\sigma)
			\lesssim
			\mathfrak B_\infty.
		\end{equation}
		
		Assume next that \(0<q<p<\infty\) and \(\mathfrak B_s<\infty\). Raising
		\eqref{eq:composition-local-kernel-test-c} to the power \(s\), integrating in
		\(w\), and using the embedding
		\(F_\alpha^{qs}(\mathbb C_I)\hookrightarrow F_\alpha^\infty(\mathbb C_I)\),
		we obtain
		\begin{equation}\label{eq:composition-separated-c-Ls}
			d(\lambda_+(z),\lambda_-(z))
			\sum_{\sigma=\pm}M_z(c_\sigma,\lambda_\sigma)
			\lesssim
			\mathfrak B_s.
		\end{equation}
		The reflected estimate for the \((1,2)\)-entry gives
		\begin{equation}\label{eq:composition-separated-d-Ls}
			d(\lambda_+(z),\lambda_-(z))
			\sum_{\sigma=\pm}M_z(d_\sigma,\lambda_\sigma)
			\lesssim
			\mathfrak B_s.
		\end{equation}
		Thus, in either case, the right-hand sides of
		\eqref{eq:composition-separated-c-infty}--
		\eqref{eq:composition-separated-d-Ls} are bounded by a constant independent
		of \(U\) and of the choice of the sign of \(\rho\).
		
		We now use \(\mathcal H=-\Delta QQ^\#\not\equiv0\). Since
		\[
		\lambda_+-\lambda_-=\rho,
		\qquad
		d(\lambda_+,\lambda_-)
		=
		\frac{|\rho|^2}{2(2+|\rho|^2)},
		\]
		the preceding estimates imply
		\[
		|c_\pm(z)|
		\lesssim
		\frac{2+|\rho(z)|^2}{|\rho(z)|^2}
		\exp\!\left
		\{\frac\alpha2\bigl(|z|^2-|\lambda_\pm(z)|^2\bigr)\right\}.
		\]
		Multiplying the two estimates and using
		\(\mathcal H=c_+c_-\rho^4\), we obtain
		\begin{equation}\label{eq:composition-HC-growth-prelog}
			|\mathcal H (z)|
			\lesssim
			(2+|\rho(z)|^2)^2
			\exp\!\left\{
			\alpha|z|^2
			-\frac\alpha2\sum_{\sigma=\pm}|\lambda_\sigma(z)|^2
			\right\}.
			\nonumber
		\end{equation}
		Since
		\[
		\sum_{\sigma=\pm}|\lambda_\sigma|^2
		=
		\frac12|\tau|^2+\frac12|\rho|^2,
		\]
		we get
		\[
		\log|\mathcal H (z)|
		+\frac\alpha4|\tau(z)|^2
		+\frac\alpha4|\rho(z)|^2
		-\alpha|z|^2
		\le
		C+2\log(2+|\rho(z)|^2).
		\]
		Using
		\[
		2\log(2+x)\le\frac\alpha8x+C,
		\qquad x\ge0,
		\]
		and \(|\rho|^2=|\Delta|\), we obtain the branch-independent estimate
		\begin{equation}\label{eq:composition-global-growth}
			\log|\mathcal H (z)|
			+\frac\alpha4|\tau(z)|^2
			+\frac\alpha8|\Delta(z)|
			-\alpha|z|^2
			\le C,
			\qquad z\in\mathcal N.
		\end{equation}
		
		To prove \textup{(i)}, drop the nonnegative term involving \(\tau\) and
		average \eqref{eq:composition-global-growth} over circles \(|z|=R\) avoiding
		the zeros of \(\Delta\mathcal H\). Jensen's formula for the nonzero entire
		function \(\mathcal H\) gives
		\[
		\frac1{2\pi}\int_0^{2\pi}
		|\Delta(Re^{I\theta})|\,\mathrm d\theta
		\lesssim 1+R^2.
		\]
		If \(\mathcal H\) has a zero of order \(m\) at the origin, apply
		Jensen's formula to \(\mathcal H(z)/z^m\). Thus the circular mean of
		\(\log|\mathcal H|\) is bounded from below, up to the harmless term
		\(m\log R\).
		
		If \(\Delta(z)=\sum_{n\ge0}d_nz^n\), Cauchy's formula yields
		\[
		|d_n|
		\lesssim
		\frac{1+R^2}{R^n}.
		\]
		Letting \(R\to\infty\), we get \(d_n=0\) for \(n\ge3\). Hence
		\[
		\Delta(z)=d_2z^2+d_1z+d_0.
		\]
		The identity \(\Delta^\#=\Delta\) gives \(d_0,d_1,d_2\in\mathbb R\). On the
		real axis,
		\[
		\Delta(x)
		=-4\bigl((\operatorname{Im}P(x))^2+|Q(x)|^2\bigr)
		\le0.
		\]
		Thus \(d_2\le0\). If \(d_2=0\) and \(d_1\ne0\), then \(\Delta\) is a
		nonconstant real affine function that is nonpositive on all of \(\mathbb R\),
		a contradiction. Therefore either \(\Delta\) is constant or \(d_2<0\). Since
		\(Q\not\equiv0\), the constant cannot be zero.
		
		To prove \textup{(ii)}, drop the term involving \(\Delta\) in
		\eqref{eq:composition-global-growth} and average over \(|z|=R\). Jensen's
		formula again gives
		\[
		\frac1{2\pi}\int_0^{2\pi}
		|\tau(Re^{I\theta})|^2\,\mathrm d\theta
		\lesssim 1+R^2.
		\]
		Writing \(\tau(z)=\sum_{n\ge0}t_nz^n\), Parseval's identity yields
		\[
		\sum_{n\ge0}|t_n|^2R^{2n}\lesssim1+R^2.
		\]
		Hence \(t_n=0\) for \(n\ge2\), so
		\[
		\tau(z)=t_1z+t_0.
		\]
		Since \(\tau^\#=\tau\), one has \(t_0,t_1\in\mathbb R\).
		
		We next prove \textup{(iii)}. Suppose that \(\Delta\) is nonconstant. Then
		\[
		\Delta(z)=d_2z^2+d_1z+d_0,
		\qquad d_2<0.
		\]
		Choose \(R>0\) containing the zeros of \(\Delta\), and put
		\[
		V_0:=\{z\in\mathbb C_I:|z|>R\}\setminus[R,\infty).
		\]
Then \(V_0\) is unbounded and simply connected, and \(V_0^\#=V_0\). 
		Choose \(\rho\in\mathcal O(V_0)\) with \(\rho^2=\Delta\). There exists a
		nonzero purely imaginary \(s_0\in\mathbb C_I\) with \(s_0^2=d_2\) such that
		\[
		\rho(z)=s_0z+\gamma+O(1/z),
		\qquad
		\gamma=\frac{d_1}{2s_0}.
		\]
		Consequently,
		\[
		\lambda_\pm(z)
		=
		a_\pm z+b_\pm+O(1/z),
		\]
		where
		\[
		a_\pm=\frac{t_1\pm s_0}{2},
		\qquad
		b_\pm=\frac{t_0\pm\gamma}{2}.
		\]
		Since \(t_1\) is real and \(s_0\) is nonzero and purely imaginary,
		\[
		a_\pm\ne0,
		\qquad
		|a_+|=|a_-|.
		\]
		Suppose that this common modulus is larger than one. Then there exist
		\(\eta>0\) and \(R_1>R\) such that
		\[
		\sum_{\sigma=\pm}|\lambda_\sigma(z)|^2-2|z|^2
		\ge\eta|z|^2,
		\qquad z\in V_0,\ |z|>R_1.
		\]
		On the exterior part of \(V_0\), the separation factor is bounded below.
		Hence the coefficient estimates and
		\(\mathcal H =c_+c_-\rho^4\) give
		\[
		|\mathcal H (z)|
		\lesssim
		(1+|z|)^4\mathrm e^{-\frac{\alpha\eta}{2}|z|^2}.
		\]
		By continuity the same decay holds on the removed ray. Thus
		\(\mathcal H (z)\to0\) as \(|z|\to\infty\) in the whole plane. Liouville's
		theorem would imply \(\mathcal H\equiv0\), a contradiction. Therefore
\(|a_\pm|\le1\). This proves \textup{(iii)}.

It remains to prove \textup{(iv)}. Suppose that \(\Delta\) has no zeros on
\(\mathbb C_I\). By \textup{(i)}, \(\Delta\) is either a nonzero constant or a
quadratic polynomial with negative leading coefficient. The second alternative
is impossible, since every nonconstant polynomial on \(\mathbb C_I\) has a
zero. Hence $
\Delta\equiv\Delta_0\ne0 $. 
Choose a constant square root \(\rho\) of \(\Delta_0\). By \textup{(ii)},
\[
\tau(z)=t_1z+t_0,
\qquad t_0,t_1\in\mathbb R.
\]
Thus the two eigenvalue functions are globally defined and have the form
\[
\lambda_\pm(z)
=
\frac{\tau(z)\pm\rho}{2}
=
az+b_\pm,
\qquad
a:=\frac{t_1}{2},
\qquad
b_\pm:=\frac{t_0\pm\rho}{2}.
\]
In particular, \(a,b_\pm\in\mathbb C_I\).

The estimate \(|a|\le1\) follows from the same exterior-decay argument used
above in the proof of \textup{(iii)}. Indeed, if \(|a|>1\), then
\(\lambda_+-\lambda_-=\rho\ne0\), so the separation factor
\(d(\lambda_+(z),\lambda_-(z))\) is bounded below by a positive constant. Also,
for some \(\eta>0\) and all sufficiently large \(|z|\),
\[
\sum_{\sigma=\pm}|\lambda_\sigma(z)|^2-2|z|^2
\ge \eta |z|^2 .
\]
Using the separated coefficient estimates and
\(\mathcal H=c_+c_-\rho^4\), where now \(\rho\) is constant, we obtain
\[
|\mathcal H(z)|
\lesssim
\exp\left\{
\alpha |z|^2
-\frac{\alpha}{2}\sum_{\sigma=\pm}|\lambda_\sigma(z)|^2
\right\}
\lesssim
\mathrm e^{-\frac{\alpha\eta}{2}|z|^2}.
\]
Hence \(\mathcal H(z)\to0\) as \(|z|\to\infty\). Since \(\mathcal H\) is
entire, Liouville's theorem gives \(\mathcal H\equiv0\), contradicting
\[
\mathcal H=-\Delta_0 QQ^\#\not\equiv0
\]
because \(\Delta_0\ne0\) and \(Q\not\equiv0\). Therefore \(|a|\le1\), and
\textup{(iv)} follows.
\end{proof} 
\begin{corollary}\label{cor:composition-zero-free-discriminant}
	Assume the hypotheses of Theorem~\ref{thm:composition-spectral-rigidity},
	and suppose that \(\Delta\) has no zeros on \(\mathbb C_I\). Then there exist
	\(a,b\in\mathbb R\), \(c>0\), and an entire function
	\(h:\mathbb C_I\to\mathbb C_I\) with \(h^\#=h\), such that
	\[
	P(z)=az+b+Ih(z),
	\qquad
	QQ^\#=c^2-h^2,
	\qquad
	|a|\le1.
	\]
	Equivalently,
	\[
	\psi_I(z)=az+b+Ih(z)+Q(z)J .
	\]
	
	If, in addition, \(P^\#=P\), then \(h\equiv0\), and hence
	\(QQ^\#=c^2\). Consequently,
	\[
	Q=c\,\mathrm e^{I\ell}
	\]
	for some entire function \(\ell:\mathbb C_I\to\mathbb C_I\) satisfying
	\(\ell^\#=\ell\).
\end{corollary}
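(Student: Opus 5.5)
By Theorem~\ref{thm:composition-spectral-rigidity}\textup{(iv)} and \textup{(ii)}, the zero-free hypothesis gives
\[
\Delta\equiv\Delta_0\ne0,\qquad \tau(z)=t_1z+t_0,\qquad t_0,t_1\in\mathbb R,\qquad |t_1/2|\le1.
\]
The plan is to turn these two facts into statements about \(P\) and \(Q\), using \(\tau=P+P^\#\), \(\Delta=(P-P^\#)^2-4QQ^\#\), and the reflection symmetries.

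\textbf{Step 1: the form of \(P\).} Decompose \(P\) into its \(\#\)-symmetric and \(\#\)-antisymmetric parts. Since \(P+P^\#=\tau\), set \(a:=t_1/2\) and \(b:=t_0/2\), both real, with \(|a|\le1\). Define \(h:=(P-P^\#)/(2I)\); it is entire. Because \(I^\#=\overline{I}=-I\) as a constant function, we get \(h^\#=(P^\#-P)/(-2I)=h\). Then
\[
P=\tfrac12(P+P^\#)+\tfrac12(P-P^\#)=az+b+Ih,
\]
which is the claimed form of \(P\).

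\textbf{Step 2: the relation for \(QQ^\#\).} Substitute \(P-P^\#=2Ih\) into the expanded discriminant:
\[
\Delta_0=(2Ih)^2-4QQ^\#=-4h^2-4QQ^\#,
\qquad\text{so}\qquad QQ^\#=-\tfrac{\Delta_0}{4}-h^2.
\]
It remains to show \(-\Delta_0/4=c^2\) with \(c>0\). Part~\textup{(i)} already gives \(\Delta_0\in\mathbb R\). On the real axis, \(\Delta(x)=-4\bigl((\operatorname{Im}P(x))^2+|Q(x)|^2\bigr)\le0\), so \(\Delta_0<0\). Hence \(c:=\sqrt{-\Delta_0}/2>0\) works. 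This sign check is the only point needing care. The formula for \(\psi_I\) then follows directly from \(\psi_I=P+QJ\).

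\textbf{Step 3: the case \(P^\#=P\).} If \(P^\#=P\), then \(h\equiv0\), so \(QQ^\#=c^2\) and \(Q\) is zero-free. Since \(\mathbb C_I\) is simply connected, write \(Q=c\,\mathrm e^{g}\) with \(g\) entire. Then \(QQ^\#=c^2\) gives \(\mathrm e^{g+g^\#}=1\), so \(g+g^\#\equiv2\pi I k\) for some integer \(k\). Evaluating on the real axis, \(g(x)+\overline{g(x)}\) is real, which forces \(k=0\) and hence \(g^\#=-g\). Set \(\ell:=g/I\). Then \(\ell^\#=g^\#/(-I)=g/I=\ell\), so \(Q=c\,\mathrm e^{I\ell}\).

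Everything here is algebraic bookkeeping built on Theorem~\ref{thm:composition-spectral-rigidity}. The main obstacles are minor: establishing the sign \(\Delta_0<0\), and pinning down the constant \(k=0\) in the logarithm step.
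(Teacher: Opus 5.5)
Your proposal is correct and follows essentially the same route as the paper. Both arguments set \(a=t_1/2\), \(b=t_0/2\) using parts (ii) and (iv), write \(P-\tau/2=Ih\) with \(h^\#=h\) from the antisymmetry of \(P-P^\#\), use \(\Delta\le0\) on the real axis to write \(\Delta_0=-4c^2\), and, when \(P^\#=P\), take a logarithm of \(Q/c\) and rule out the \(2\pi I\mathbb Z\) constant by evaluating on \(\mathbb R\). Your explicit formula \(h=(P-P^\#)/(2I)\) simply makes concrete the decomposition step that the paper states in one line.
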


\begin{proof}
	By Theorem~\ref{thm:composition-spectral-rigidity}\textup{(iv)}, the
	zero-free assumption implies that $
		\Delta\equiv\Delta_0\ne0 $. 
	Moreover, after choosing a constant square root of \(\Delta_0\), the two
	global eigenvalue functions are affine with common slope of modulus at most
	one. On the other hand, by
	Theorem~\ref{thm:composition-spectral-rigidity}\textup{(ii)},
	\[
		\tau(z)=t_1z+t_0,
		\qquad t_0,t_1\in\mathbb R .
	\]
	Thus the common slope is \(t_1/2\). Put
	\[
		a:=\frac{t_1}{2},
		\qquad
		b:=\frac{t_0}{2}.
	\]
	Then \(a,b\in\mathbb R\) and \(|a|\le1\).
	
	We next determine the form of \(P\). 	Since \(\Delta(x)\le0\) for real \(x\), the nonzero constant \(\Delta_0\) is
		negative. Write \(\Delta_0=-4c^2\), \(c>0\), and choose \(\rho=2Ic\). The
		identity
		\[
		\left(P-\frac\tau2\right)^\#
		=-\left(P-\frac\tau2\right)
		\]
		shows that
		\[
		P(z)=az+b+Ih(z)
		\]
		for an entire \(h\) satisfying \(h^\#=h\). Finally,
		\eqref{eq:composition-Delta-expanded} gives
		\[
		QQ^\#=c^2-h^2.
		\]
	
	If \(P^\#=P\), then \(P-\tau/2\equiv0\), so \(h\equiv0\), and consequently
\(QQ^\#=c^2\). Thus \(Q/c\) is zero-free on the simply connected plane
	\(\mathbb C_I\), and hence \(Q/c=\mathrm e^H\) for some entire
	\(H:\mathbb C_I\to\mathbb C_I\). The identity \(QQ^\#=c^2\) gives
	\[
	\mathrm e^{H+H^\#}=1.
	\]
	Thus \(H+H^\#\) is constant with values in \(2\pi I\mathbb Z\). Evaluating on
the real axis gives \(H(x)+H^\#(x)=2\operatorname{Re}H(x)\in\mathbb R\).
Hence this constant is both real and purely imaginary, so it is \(0\). Therefore
	\(H^\#=-H\). Setting \(\ell:=-IH\), we get \(\ell^\#=\ell\) and $
	Q=c\,\mathrm e^{I\ell}$. 
	This completes the proof.
\end{proof}
	 
\subsection{The good/singular decomposition}
	\label{subsec:composition-good-singular}
 
We next introduce a geometric decomposition of the fixed slice. In the
case \(Q\not\equiv0\), Theorem~\ref{thm:composition-spectral-rigidity}
implies that the zero set \(Z(\Delta)\) is finite, with at most two points.
Since the local spectral representation
\eqref{eq:composition-local-four-term-11}--\eqref{eq:composition-local-four-term-12}
requires a holomorphic branch of \(\sqrt{\Delta}\), we separate off a small
compact neighborhood \(E\) of \(Z(\Delta)\). The set \(E\) may be chosen with
arbitrarily small planar measure. On the remaining good regions, the chosen local eigenvalue branches and the
corresponding spectral coefficients are holomorphic, and the two local eigenvalue
branches are uniformly separated.

	\begin{lemma}\label{lem:composition-good-singular-decomposition}
		Assume
		\[
		\Delta(z)=d_2z^2+d_1z+d_0,
		\qquad d_0,d_1,d_2\in\mathbb R,
		\qquad d_2<0.
		\]
	There exists a compact set \(E\subset\mathbb C_I\) with \(E=E^\#\) and 
\(Z(\Delta)\subset E\), which can be chosen as an arbitrarily small finite union of
		pairwise disjoint closed disks centered at the zeros of \(\Delta\). Moreover,
		there exist a null set \(\Gamma\subset\mathbb C_I\) and two nondegenerate
		local regions \(U_0,U_1\subset\mathbb C_I\setminus E\), with \(U_0\)
		unbounded and \(U_1\) bounded, such that
		\[
		\mathbb C_I\setminus(E\cup\Gamma)=U_0\cup U_1.
		\]
		In addition, there is \(\varepsilon_0>0\) for which
		\[
		|\Delta(z)|\ge\varepsilon_0,
		\qquad z\in U_0\cup U_1.
		\]
	\end{lemma}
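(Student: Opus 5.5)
The plan is to build \(E\), \(\Gamma\), \(U_0\), \(U_1\) explicitly from the location of the zeros of \(\Delta\), and to get the bound on \(|\Delta|\) from continuity together with the quadratic growth of \(\Delta\).

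\emph{Zeros and the set \(E\).} Since \(d_2<0\) and the coefficients are real, \(Z(\Delta)\) is of one of three types:
\begin{itemize}
\item two distinct real zeros \(x_1<x_2\);
\item one real double zero \(x_0\);
\item a conjugate pair \(x_0\pm y_0I\) with \(y_0>0\).
\end{itemize}
In every case \(Z(\Delta)^\#=Z(\Delta)\). Fix \(r>0\) smaller than half the mutual distance of the zeros (and, in the conjugate case, smaller than \(y_0\)). Let \(E\) be the union of the closed disks \(\overline{D_I(\zeta,r)}\), \(\zeta\in Z(\Delta)\). These disks are pairwise disjoint. Because the radius is common and the centres form a conjugation-invariant set, \(E^\#=E\). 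Letting \(r\to0\) makes the planar measure of \(E\) arbitrarily small.

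\emph{Cuts and the two regions.} Choose \(R>0\) with \(E\subset D_I(0,R)\).

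For the unbounded part, take, as in the proof of Theorem~\ref{thm:composition-spectral-rigidity}(iii),
\[
U_0:=\{|z|>R\}\setminus[R,\infty).
\]
It is simply connected, unbounded, satisfies \(U_0^\#=U_0\), and is disjoint from \(E\).

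For the bounded part, let \(K\) be a compact connected set \(K\supset E\), with \(K^\#=K\), which meets the circle \(|z|=R\) but does not separate the disk:
\begin{itemize}
\item two real zeros: \(K=[-R,x_1-r]\cup E\cup[x_1+r,x_2-r]\);
\item double zero: \(K=[-R,x_0-r]\cup E\);
\item conjugate pair: \(K=[-R,x_0]\cup\{x_0+tI:\ |t|\le y_0-r\}\cup E\).
\end{itemize}
Set \(U_1:=D_I(0,R)\setminus K\). In each case \(U_1\) is connected, because the upper and lower half-disks communicate through the real segment \((x_2+r,R)\), respectively \((x_0+r,R)\) or \((x_0,R)\). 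It is also symmetric.

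\emph{Simple connectivity of \(U_1\) (the main point to check).} I would argue as follows. The complement of \(U_1\) in the Riemann sphere is \(K\cup\{|z|\ge R\}\cup\{\infty\}\), which is connected because \(K\) is connected and touches the circle. A plane domain whose complement in the sphere is connected is simply connected.

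\emph{The null set and the covering.} Put
\[
\Gamma:=(K\setminus E)\cup\{|z|=R\}\cup[R,\infty),
\]
a finite union of segments, a ray and a circle, hence a null set. By construction \(\mathbb C_I\setminus(E\cup\Gamma)=U_0\cup U_1\), and both regions lie in \(\mathcal N\), so they are nondegenerate local regions.

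\emph{Lower bound on \(|\Delta|\).} The function \(|\Delta|\) is continuous and zero-free on the closed set \(\mathbb C_I\setminus\operatorname{int}E\). Moreover, \(|\Delta(z)|\ge|d_2||z|^2/2\) for \(|z|\) large, since \(\Delta\) is quadratic. Therefore \(|\Delta|\) attains a positive minimum \(\varepsilon_0\) on \(\mathbb C_I\setminus\operatorname{int}E\), which contains \(U_0\cup U_1\).
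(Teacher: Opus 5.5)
Your construction is correct and is essentially the paper's proof: a conjugation-invariant union of small closed disks for \(E\), the slit exterior \(U_0=\{|z|>R\}\setminus[R,\infty)\), the disk \(D_I(0,R)\) minus \(E\) and symmetric cross-cuts for \(U_1\), and \(\varepsilon_0>0\) obtained from continuity together with the quadratic growth of \(\Delta\). Your explicit cuts (real segments, plus the vertical segment in the conjugate-pair case) and the connected-complement criterion simply make precise the paper's sketched choice of cross-cuts and their reflected images.
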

	
	\begin{proof}
		The zero set of \(\Delta\) is finite and $Z(\Delta)^\#=Z(\Delta)$. Choose a finite union \(E_0\) of pairwise disjoint closed disks centered at
zeros of \(\Delta\), with arbitrarily small  radius, and put $
E:=E_0\cup E_0^\# $. 
After decreasing the radii if necessary, \(E\) is still a finite union of
pairwise disjoint closed disks, contains \(Z(\Delta)\), satisfies \(E=E^\#\),
and has arbitrarily small planar measure. Since \(|\Delta(z)|\to\infty\)
		as \(|z|\to\infty\), one has
		\[
		\inf_{\mathbb C_I\setminus E}|\Delta|>0.
		\]
		Choose \(R\) so that \(E\subset D(0,R)\), and put
		\[
		U_0:=\{z:|z|>R\}\setminus[R,\infty).
		\]
	Inside \(D(0,R)\setminus E\), choose finitely many cross-cuts connecting the
boundary components to \(\partial D(0,R)\), and include also their reflected
images, so that the remaining domain is simply connected. Denote this remaining domain by
	\(U_1\). Let
		\(\Gamma\) be the union of the cuts, the circle \(\partial D(0,R)\), and the line  $[R,\infty)$. It is a finite union of curves and hence has planar measure
		zero. The stated conclusions follow.
	\end{proof}
	
	If \(\Delta\equiv\Delta_0\ne0\), we use the convention
	\begin{equation}\label{eq:composition-constant-Delta-convention}
		E=\Gamma=\varnothing,
		\qquad
		U_0=\mathbb C_I,
		\qquad
		U_1=\varnothing,
		\qquad
		\varepsilon_0=|\Delta_0|.
	\end{equation}
	Choose a constant square root of \(\Delta_0\). Whenever \(U_j=\varnothing\), all sums, integrals, and measures
	indexed by \(j\) are omitted.
	
	For each nonempty \(U_j\), choose
	\[
	\rho_j\in\mathcal O(U_j),
	\qquad \rho_j^2=\Delta,
	\]
	and set
	\[
	\lambda_{\pm,j}:=\frac{\tau\pm\rho_j}{2}.
	\]
	Let \(c_{\pm,j},d_{\pm,j}\) be the coefficients in
	\eqref{eq:composition-local-coefficients}. For a Borel set
	\(\mathcal A\subset\mathbb C_I\), define
	\begin{align*}
		\nu_j^{c,\pm}(\mathcal A)
		&:=
		\int_{U_j\cap\lambda_{\pm,j}^{-1}(\mathcal A)}
		|c_{\pm,j}(z)|^q
		\mathrm e^{-\frac{\alpha q}{2}|z|^2}
		\,\mathrm d m_{2,I}(z),\\
		\nu_j^{d,\pm}(\mathcal A)
		&:=
		\int_{U_j\cap\lambda_{\pm,j}^{-1}(\mathcal A)}
		|d_{\pm,j}(z)|^q
		\mathrm e^{-\frac{\alpha q}{2}|z|^2}
		\,\mathrm d m_{2,I}(z).
	\end{align*}
	The associated weighted measures are
	\begin{equation}\label{eq:composition-Lambda-def}
		\mathrm d\Lambda_j^{c,\pm}(\zeta)
		:=
		\mathrm e^{\frac{\alpha q}{2}|\zeta|^2}
		\,\mathrm d\nu_j^{c,\pm}(\zeta),
		\qquad
		\mathrm d\Lambda_j^{d,\pm}(\zeta)
		:=
		\mathrm e^{\frac{\alpha q}{2}|\zeta|^2}
		\,\mathrm d\nu_j^{d,\pm}(\zeta).
		\nonumber
	\end{equation}
	For \(w\in\mathbb C_I\), define
	\begin{align}
		\mathcal B_{U_j}(w)
		&:=
		\int_{U_j}
		|(k_w\bullet\psi)_I(z)|^q
		\mathrm e^{-\frac{\alpha q}{2}|z|^2}
		\,\mathrm d m_{2,I}(z),\nonumber
	 \\
		\mathcal B_E(w)
		&:=
		\int_E
		|(k_w\bullet\psi)_I(z)|^q
		\mathrm e^{-\frac{\alpha q}{2}|z|^2}
		\,\mathrm d m_{2,I}(z), 
		\nonumber
	\end{align}
	where \(\mathcal B_E\equiv0\) under the convention
	\eqref{eq:composition-constant-Delta-convention}. Since
	\(m_{2,I}(\Gamma)=0\),
	\begin{equation}\label{eq:composition-B-decomposition}
		\mathcal B_{\psi,I}
		=
		\mathcal B_{U_0}+\mathcal B_{U_1}+\mathcal B_E.
	\end{equation}

	\subsection{Carleson estimates on the good regions}
	\label{subsec:composition-good-Carleson}
	 
	\begin{proposition}\label{prop:composition-good-p-le-q}
		Assume \(Q\not\equiv0\), \(0<p\le q<\infty\), and
		\(\mathfrak B_\infty<\infty\). Use the good/singular notation above. Then,
		for \(j=0,1\), the measures
		\[
		\Lambda_j^{c,\pm},
		\qquad
		\Lambda_j^{d,\pm}
		\]
		are \((p,q)\)-Fock--Carleson measures, and their Carleson embedding constants
		are bounded by \(C\mathfrak B_\infty^q\).
		
		If, in addition,
		\[
		\mathcal B_{\psi,I}(w)\to0
		\qquad (|w|\to\infty),
		\]
		then all these measures are vanishing \((p,q)\)-Fock--Carleson measures.
	\end{proposition}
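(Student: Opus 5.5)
The plan is to reduce the claim to the standard Berezin-transform characterization of $(p,q)$-Fock--Carleson measures for $p\le q$ (see \cite[Theorems~2.6 and~2.7]{MR3248473}). For $p\le q$, a positive Borel measure $\mu$ on $\mathbb C_I$ is $(p,q)$-Fock--Carleson iff
\[
\sup_{w}\int_{\mathbb C_I}|k_w(\zeta)|^q\mathrm e^{-\frac{\alpha q}{2}|\zeta|^2}\,\mathrm d\mu(\zeta)<\infty,
\]
with embedding constant comparable to this supremum. It is vanishing iff the integral tends to $0$ as $|w|\to\infty$. By the change of variables defining $\Lambda_j^{c,\pm}$, the corresponding quantity is
\[
\widetilde{\mathcal B}_j^{c,\pm}(w):=\int_{U_j}|c_{\pm,j}(z)|^q|k_w(\lambda_{\pm,j}(z))|^q\mathrm e^{-\frac{\alpha q}{2}|z|^2}\,\mathrm d m_{2,I}(z),
\]
and similarly for $d$. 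So it suffices to prove $\widetilde{\mathcal B}_j^{c,\pm}(w)\lesssim \sup_{w'\in W(w)}\mathcal B_{\psi,I}(w')$, where $W(w)$ is a bounded-size set of points comparable to $w$.

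The identity $(\Phi_I(k_w\bullet\psi))_{11}=c_+k_w(\lambda_+)+c_-k_w(\lambda_-)$ on $U_j$ only controls the sum of the two terms, not each one separately. To separate them I will use the $\star$-multiplicative structure. Replace $k_w$ by $\mathcal K_w:=k_w\star(\xi-\lambda)$-type modifications. More concretely, use the functional calculus with the entire function $H(\zeta)=k_w(\zeta)(\zeta-\mu)$, where $\mu$ is a parameter. Then
\[
H(\Psi)=k_w(\Psi)(\Psi-\mu I_2)=\sum_\sigma k_w(\lambda_\sigma)(\lambda_\sigma-\mu)E_\sigma .
\]
Alternatively, and more simply, combine the two test functions $k_w$ and $\xi\star k_w$ (that is, $H(\zeta)=\zeta k_w(\zeta)$). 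Their $(1,1)$ entries give the linear system
\[
c_+k_w(\lambda_+)+c_-k_w(\lambda_-)=A_1,\qquad \lambda_+c_+k_w(\lambda_+)+\lambda_-c_-k_w(\lambda_-)=A_2 .
\]
Solving it gives $c_\pm k_w(\lambda_\pm)=\pm(A_2-\lambda_\mp A_1)/\rho$. On $U_0\cup U_1$ we have $|\rho|\ge\varepsilon_0^{1/2}$, and by Theorem~\ref{thm:composition-spectral-rigidity} $|\lambda_\mp(z)|\lesssim1+|z|$. Hence
\[
|c_\pm k_w(\lambda_\pm)|^q\lesssim |A_2|^q+(1+|z|)^q|A_1|^q .
\]
The $A_1$ term integrates to at most $\mathcal B_{\psi,I}(w)$, except for the polynomial weight $(1+|z|)^q$. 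The $A_2$ term is the testing integral of $\zeta k_w(\zeta)$.

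The main obstacle is bounding these two extra pieces by $\mathfrak B_\infty^q$: the weight $(1+|z|)^q$ and the function $\zeta k_w(\zeta)$. For the second, I would write $\zeta k_w(\zeta)=\alpha^{-1}\partial_{\overline w}$-type derivatives of $K_w$ times $\mathrm e^{-\alpha|w|^2/2}$, or compare it with $\bar w k_w$ plus a finite combination of kernels at nearby points via Cauchy's formula in $w$. Integrating the resulting estimate should give $\lesssim(1+|w|)^q\sup_{|w'-w|\le1}\mathcal B_{\psi,I}(w')$.

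The polynomial factors $(1+|z|)^q$ and $(1+|w|)^q$ must then be absorbed. If that fails, I would instead use the mutual-separation estimate \cite[Lemma~2.1]{MR3895397} locally. The idea is that $|\lambda_+-\lambda_-|=|\rho|\ge\varepsilon_0^{1/2}$, and a kernel $k_w$ is essentially concentrated in a disk of radius $O(1)$ around $w$. On the region where $|\lambda_+(z)-w|\le r_0$ with $r_0$ small, the term $|k_w(\lambda_-(z))|$ is exponentially smaller than $|k_w(\lambda_+(z))|$ for $r_0$ small relative to $\varepsilon_0$. Thus $|c_+k_w(\lambda_+)|\lesssim|A_1|+|c_-k_w(\lambda_-)|$, and I will bound the latter using the pointwise coefficient bounds \eqref{eq:composition-separated-c-infty}. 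This yields the local Carleson condition $\Lambda_j^{c,+}(D_I(w,r_0))\lesssim\mathfrak B_\infty^q$. For $p\le q$ this local condition is equivalent to the Carleson property, with constants uniform in $w$.

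The $d$-measures are handled identically, via the $(1,2)$ entry with $w$ replaced by $\overline w$. For the vanishing statement, the same local inequalities show $\Lambda_j(D_I(w,r_0))\lesssim\sup_{|w'-w|\le C}\mathcal B_{\psi,I}(w')+o(1)$, which tends to $0$ as $|w|\to\infty$.
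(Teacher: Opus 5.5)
Your first route does not close as written. The identity $c_\pm k_w(\lambda_\pm)=\pm(A_2-\lambda_\mp A_1)/\rho$ is correct, but $\zeta k_w$ has $F_\alpha^p$-norm $\approx1+|w|$, so its testing integral is not controlled uniformly by $\mathfrak B_\infty$. Moreover, the weight $(1+|z|)^q$ is of size $(1+|w|)^q$ exactly where $\lambda_\pm(z)$ is near $w$.

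Your fallback is essentially the paper's route, but its decisive step is missing. Once you invoke the separated bound $M_z(c_{\pm,j},\lambda_{\pm,j})\lesssim\mathfrak B_\infty$ on $U_j$, the splitting through $A_1$ is superfluous, since
\[
\Lambda_j^{c,+}\bigl(D_I(w,r_0)\bigr)=\int_{U_j\cap\lambda_{+,j}^{-1}(D_I(w,r_0))}M_z(c_{+,j},\lambda_{+,j})^q\,\mathrm d m_{2,I}(z)\lesssim\mathfrak B_\infty^q\,m_{2,I}\bigl(U_j\cap\lambda_{+,j}^{-1}(D_I(w,r_0))\bigr),
\]
and your estimate of the $c_-$-term ends in the same area. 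Everything therefore hinges on a $w$-uniform bound for the area of these preimages, equivalently the Gaussian pull-back bound \eqref{eq:composition-Gaussian-pullback}. You assert this (``constants uniform in $w$'') but do not prove it. The paper obtains it from the affine asymptotics $\lambda_{\pm,0}=a_\pm z+b_\pm+O(1/z)$ with $a_\pm\neq0$ of Theorem~\ref{thm:composition-spectral-rigidity}(iii): preimages of $D_I(w,r)$ in $U_0$ then sit in disks of radius $\approx r/|a_\pm|$. When $\Delta$ is constant and $t_1\neq0$, it uses exact affinity instead.

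In the remaining case $\Delta\equiv\Delta_0$, $t_1=0$, your argument actually fails. Both branches are constant and the preimage is all of $\mathbb C_I$. The pointwise bound $|c_\pm(z)|\lesssim\mathfrak B_\infty\mathrm e^{\frac\alpha2(|z|^2-|\lambda_\pm|^2)}$ then makes the integrand against $\mathrm e^{-\frac{\alpha q}{2}|z|^2}$ a constant, so you get $+\infty$. This case is not vacuous: for example, $\psi_I(z)=I\cos z+\sin(z)J$ has $\lambda_\pm\equiv\pm I$, and $C_\psi^\bullet$ has finite rank. The paper handles it by choosing $w_1,w_2$ with $\det\bigl(k_{w_i}(\lambda_{\sigma,0})\bigr)\neq0$ and inverting the fixed linear system for $c_{\pm,0}$. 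This gives $\int_{\mathbb C_I}(|c_{+,0}|^q+|c_{-,0}|^q)\mathrm e^{-\frac{\alpha q}{2}|z|^2}\,\mathrm d m_{2,I}\lesssim\mathfrak B_\infty^q$, so the measures are finite point masses.

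Finally, the ``$o(1)$'' in your vanishing argument does not follow from the $\mathfrak B_\infty$-bound. You need the refined density estimate
\[
M_z(c_{\pm,j},\lambda_{\pm,j})^q+M_z(d_{\pm,j},\lambda_{\pm,j})^q\lesssim\sum_{\sigma}\bigl[\mathcal B_{\psi,I}(\lambda_{\sigma,j}(z))+\mathcal B_{\psi,I}(\overline{\lambda_{\sigma,j}(z)})\bigr].
\]
It comes from evaluating the kernel inequality at $w=\lambda_{\sigma,j}(z)$ and at $\overline{\lambda_{\sigma,j}(z)}$, then inverting the $2\times2$ system whose off-diagonal entries have modulus $\mathrm e^{-\frac\alpha2|\rho|^2}\le\mathrm e^{-\frac\alpha2\varepsilon_0}<1$. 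You must combine this with $|\lambda_{\sigma,0}(z)|\to\infty$ as $|z|\to\infty$, which again uses the nonzero slopes.
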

	
	\begin{proof}
		On \(U_j\), the two local eigenvalue branches are uniformly separated:
		\[
		|\lambda_{+,j}(z)-\lambda_{-,j}(z)|^2
		=|\Delta(z)|\ge\varepsilon_0.
		\]
		Thus \eqref{eq:composition-separated-c-infty} and
		\eqref{eq:composition-separated-d-infty} imply
		\begin{equation}\label{eq:composition-good-density-bound}
			M_z(c_{\pm,j},\lambda_{\pm,j})
			+
			M_z(d_{\pm,j},\lambda_{\pm,j})
			\lesssim
			\mathfrak B_\infty.
			\nonumber
		\end{equation}
		Consequently,
		\begin{equation}\label{eq:composition-good-Berezin-bound}
			\begin{aligned}
				\widetilde\Lambda_j^{c,\pm}(w)
				&=
				\int_{U_j}
				M_z(c_{\pm,j},\lambda_{\pm,j})^q
				\exp\!\left(-\frac{\alpha q}{2}
				|w-\lambda_{\pm,j}(z)|^2\right)
				\,\mathrm d m_{2,I}(z)\\
				&\lesssim
				\mathfrak B_\infty^q
				\int_{U_j}
				\exp\!\left(-\frac{\alpha q}{2}
				|w-\lambda_{\pm,j}(z)|^2\right)
				\,\mathrm d m_{2,I}(z).
			\end{aligned}
			\nonumber
		\end{equation}
		The same estimate holds for \(\widetilde\Lambda_j^{d,\pm}\).
		
		The integral over the bounded region \(U_1\) is uniformly bounded. Suppose
		first that \(\Delta\) is nonconstant. On \(U_0\),
		by \eqref{eq:composition-branch-affine-asymptotics}, after increasing the exterior radius if necessary,
		\[
		|w-\lambda_{\pm,0}(z)|^2
		\ge
		\frac12|w-a_\pm z-b_\pm|^2-1.
		\]
		An affine change of variables therefore yields
		\begin{equation}\label{eq:composition-Gaussian-pullback}
			\sup_{w\in\mathbb C_I}
			\int_{U_0}
			\exp\!\left(-\frac{\alpha q}{2}
			|w-\lambda_{\pm,0}(z)|^2\right)
			\,\mathrm d m_{2,I}(z)
			<\infty.
		\end{equation}
		
		If \(\Delta\equiv\Delta_0\ne0\), then
		\begin{equation}\label{eq:composition-constant-discriminant-branches}
		\lambda_{\pm,0}(z)
		=
		\frac{t_1}{2}z+\frac{t_0\pm\rho}{2}.
		\end{equation} 
		When \(t_1\ne0\), the same affine Gaussian estimate applies. When
		\(t_1=0\), the two eigenvalue branches are distinct constants. Choose
		\(w_1,w_2\in\mathbb C_I\) such that
		\[
		\det
		\begin{pmatrix}
			k_{w_1}(\lambda_{+,0})&k_{w_1}(\lambda_{-,0})\\[2pt]
			k_{w_2}(\lambda_{+,0})&k_{w_2}(\lambda_{-,0})
		\end{pmatrix}
		\ne0.
		\]
		The identities
		\[
		\bigl(\Phi_I(k_{w_i}\bullet\psi)\bigr)_{11}
		=
		c_{+,0}k_{w_i}(\lambda_{+,0})
		+c_{-,0}k_{w_i}(\lambda_{-,0}),
		\qquad i=1,2,
		\]
		form a fixed invertible linear system. Hence
		\[
		\int_{\mathbb C_I}
		\bigl(|c_{+,0}|^q+|c_{-,0}|^q\bigr)
		\mathrm e^{-\frac{\alpha q}{2}|z|^2}
		\,\mathrm d m_{2,I}(z)
		\lesssim
		\mathfrak B_\infty^q.
		\]
		Thus \(\Lambda_0^{c,\pm}\) is a finite point mass. Applying the same argument
		to the \((1,2)\)-entry  gives the conclusion
		for \(\Lambda_0^{d,\pm}\).
		
		We have therefore shown that all the relevant Berezin transforms are bounded.
	The complex Fock--Carleson theorem
\cite[Theorem~2.6]{MR3248473} gives the first assertion; see also
\cite{MR2609242,MR2964691,MR2819157} for related Fock--Carleson measure
characterizations.
		
	Now assume that
	\[
	\mathcal B_{\psi,I}(w)\longrightarrow0
	\qquad (|w|\to\infty).
	\]
	Evaluating the local kernel estimates at
	\(w=\lambda_{+,j}(z)\) and \(w=\lambda_{-,j}(z)\), and using the
	uniform separation of the two branches on \(U_j\), we obtain
	\begin{equation}\label{eq:composition-branch-density-vanishing-control}
		\begin{aligned}
			&M_z(c_{\pm,j},\lambda_{\pm,j})^q
			+
			M_z(d_{\pm,j},\lambda_{\pm,j})^q \lesssim
			\sum_{\sigma=\pm}
			\left[
			\mathcal B_{\psi,I}(\lambda_{\sigma,j}(z))
			+
			\mathcal B_{\psi,I}
			(\overline{\lambda_{\sigma,j}(z)})
			\right].
		\end{aligned}
	\end{equation}
	
	Consider first the unbounded region \(U_0\), and suppose that the
	branches are nonconstant. Their affine asymptotics imply that
	\[
	|\lambda_{\sigma,0}(z)|\longrightarrow\infty,
	\qquad \sigma\in\{+,-\},
	\]
	as \(|z|\to\infty\) in \(U_0\). Hence
	\eqref{eq:composition-branch-density-vanishing-control} gives
	\[
	\sup_{\substack{z\in U_0\\ |z|>R}}
	\left[
	M_z(c_{\pm,0},\lambda_{\pm,0})^q
	+
	M_z(d_{\pm,0},\lambda_{\pm,0})^q
	\right]
	\longrightarrow0
	\qquad (R\to\infty).
	\]
	
	Fix \(R>0\) and decompose, for example,
	\[
	\widetilde\Lambda_0^{c,\pm}(w)
	=
	\int_{U_0\cap D(0,R)}
	M_z(c_{\pm,0},\lambda_{\pm,0})^q
	\mathrm e^{-\frac{\alpha q}{2}
		|w-\lambda_{\pm,0}(z)|^2}
	\,\mathrm d m_{2,I}(z)
	\]
	\[
	\hspace{2cm}
	+
	\int_{U_0\setminus D(0,R)}
	M_z(c_{\pm,0},\lambda_{\pm,0})^q
	\mathrm e^{-\frac{\alpha q}{2}
		|w-\lambda_{\pm,0}(z)|^2}
	\,\mathrm d m_{2,I}(z).
	\]
	On the bounded part \(U_0\cap D(0,R)\), both the branch density and
	\(\lambda_{\pm,0}\) are bounded. Therefore the Gaussian factor tends
	to zero uniformly there as \(|w|\to\infty\), and the first integral
	tends to zero. On the exterior part, the preceding density estimate
	and \eqref{eq:composition-Gaussian-pullback} give
	\[
	\begin{aligned}
		&\int_{U_0\setminus D(0,R)}
		M_z(c_{\pm,0},\lambda_{\pm,0})^q
		\mathrm e^{-\frac{\alpha q}{2}
			|w-\lambda_{\pm,0}(z)|^2}
		\,\mathrm d m_{2,I}(z)  \lesssim
		\sup_{\substack{z\in U_0\\ |z|>R}}
		M_z(c_{\pm,0},\lambda_{\pm,0})^q,
	\end{aligned}
	\]
	uniformly in \(w\). Letting first \(|w|\to\infty\) and then
	\(R\to\infty\), we obtain $
	\widetilde\Lambda_0^{c,\pm}(w)\longrightarrow0$. 
	The same argument applies to
	\(\widetilde\Lambda_0^{d,\pm}\).
	
	On the bounded region \(U_1\), the branch functions and branch
	densities are bounded. Hence the Gaussian factor tends uniformly to
	zero as \(|w|\to\infty\), and consequently
	\[
	\widetilde\Lambda_1^{c,\pm}(w)\longrightarrow0,
	\qquad
	\widetilde\Lambda_1^{d,\pm}(w)\longrightarrow0.
	\]
If the two eigenvalue branches are constant, the corresponding measures are
finite point masses, whose Berezin transforms also tend to zero. 
	The vanishing part of the complex Fock--Carleson theorem completes
	the proof.
	\end{proof}
	
	\begin{proposition}\label{prop:composition-good-q-less-p}
	Under the standing notation of the two-branch case, let 
\(0<q<p<\infty\) and put \(s=p/(p-q)\). Assume that the 
good/singular decomposition is available; this is the case, for instance, 
under either \(\mathfrak B_\infty<\infty\) or \(\mathfrak B_s<\infty\). 
Then the following assertions hold.
		
		\smallskip
		\noindent\textup{(i)} If
		\[
		\mathcal B_{\psi,I}\in L^s(\mathbb C_I),
		\]
		then each of the measures
		\[
		\Lambda_j^{c,\pm},
		\qquad
		\Lambda_j^{d,\pm},
		\qquad j=0,1,
		\]
		is a vanishing \((p,q)\)-Fock--Carleson measure. Moreover, their
		Carleson embedding constants are controlled by
		\(C\|\mathcal B_{\psi,I}\|_{L^s}\).
		
		\smallskip
		\noindent\textup{(ii)} If \(C_\psi^\bullet:F_\alpha^p(\mathbb H)\to
		F_\alpha^q(\mathbb H)\) is bounded, then
		\[
		\mathcal B_{U_j}\in L^s(\mathbb C_I),
		\qquad j=0,1,
		\]
		and
		\[
		\|\mathcal B_{U_j}\|_{L^s}^{1/q}
		\lesssim
		\|C_\psi^\bullet\|.
		\]
	\end{proposition}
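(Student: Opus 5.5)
For (i), I will use the complex Fock--Carleson theory for $q<p$ \cite[Theorem~2.7]{MR3248473}. That theorem says a positive measure $\Lambda$ is a $(p,q)$-Fock--Carleson measure, automatically vanishing, if and only if its averaged Berezin-type transform $\widetilde\Lambda$ (equivalently, its $r$-averages $\Lambda(D_I(w,r))$) lies in $L^{s}(\mathbb C_I)$, with $s=p/(p-q)$. The Carleson constant is comparable to $\|\widetilde\Lambda\|_{L^s}$. It therefore suffices to show
\[
\|\widetilde\Lambda_j^{c,\pm}\|_{L^s}+\|\widetilde\Lambda_j^{d,\pm}\|_{L^s}\lesssim\|\mathcal B_{\psi,I}\|_{L^s}.
\]

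I will start from the local kernel estimate \eqref{eq:composition-local-kernel-test-c}, but keep it in its integrated form rather than taking a supremum. Fix $z\in U_j$. Testing $\mathcal B_{\psi,I}(w)$ with $w$ near $\lambda_{\pm,j}(z)$, the uniform separation $|\lambda_{+,j}-\lambda_{-,j}|^2=|\Delta|\ge\varepsilon_0$ together with the two-kernel separation lemma \cite[Lemma~2.1]{MR3895397} allows me to isolate one branch. This gives the pointwise bound \eqref{eq:composition-branch-density-vanishing-control}:
\[
M_z(c_{\pm,j},\lambda_{\pm,j})^q\lesssim\sum_{\sigma}\Bigl[\mathcal B_{\psi,I}(\lambda_{\sigma,j}(z))+\mathcal B_{\psi,I}(\overline{\lambda_{\sigma,j}(z)})\Bigr].
\]
The point is that the branch density at $z$ is dominated by $\mathcal B_{\psi,I}$ evaluated on the finitely many points $\lambda_{\sigma,j}(z)$ and their conjugates. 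For the $L^s$ theory I want a smoothed version, obtained by averaging over a disk $D_I(\lambda_{\pm,j}(z),r)$. The Berezin transform $\mathcal B_{\psi,I}$ satisfies a subharmonic-type local comparison $\mathcal B_{\psi,I}(w)\lesssim \mathcal B_{\psi,I}(w')$ for $|w-w'|<r$, because the normalized kernels $k_w$ and $k_{w'}$ are comparable up to unimodular factors on bounded distances. Using this comparison I can replace point values by averages over disks.

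Inserting the bound into
\[
\widetilde\Lambda_j^{c,\pm}(w)=\int_{U_j}M_z^q\,\mathrm e^{-\frac{\alpha q}{2}|w-\lambda_{\pm,j}(z)|^2}\,\mathrm dm_{2,I}(z),
\]
I then estimate the $L^s$ norm in $w$. On the bounded region $U_1$ everything is bounded, and $\widetilde\Lambda_1$ decays like a Gaussian in $|w|$, so it lies in $L^s$. Its size is controlled by $\mathcal B_{\psi,I}$ on a compact set, which in turn is controlled by $\|\mathcal B_{\psi,I}\|_{L^s}$ via the local comparison. On $U_0$, Theorem~\ref{thm:composition-spectral-rigidity}(iii)/(iv) gives $\lambda_{\pm,0}(z)=a_\pm z+b_\pm+O(1/z)$ with $a_\pm\neq0$ in the nonconstant case. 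The change of variables $\zeta=\lambda_{\pm,0}(z)$ is therefore finite-to-one with Jacobian $\approx |a_\pm|^2$ for large $|z|$. It transforms $\widetilde\Lambda_0$ into a Gaussian convolution of a function dominated by averages of $\mathcal B_{\psi,I}(\zeta)+\mathcal B_{\psi,I}(\overline\zeta)$, plus the analogous contributions at the other branch composed with a bounded-distortion affine map. Young's inequality for convolution with the Gaussian then gives the $L^s$ bound. The constant-branch case ($t_1=0$, constant $\Delta$) is handled as in Proposition~\ref{prop:composition-good-p-le-q}: the measures are finite point masses, and their mass is controlled by an invertible $2\times2$ system of testing functions.

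For (ii), I assume $C^\bullet_\psi$ is bounded and use the standard Luecking-type randomization argument. Take an $r$-lattice $\{w_k\}$ in $\mathbb C_I$, coefficients $(\beta_k)\in\ell^p$, and Rademacher functions $r_k(t)$, and set $f_t=\sum_k \beta_k r_k(t)k_{w_k}$ (the slice regular extension). Since $\|f_t\|_{F^p_\alpha}\lesssim\|\beta\|_{\ell^p}$, boundedness gives
\[
\int_{U_j}|(f_t\bullet\psi)_I|^q\mathrm e^{-\frac{\alpha q}{2}|z|^2}\,\mathrm dm\lesssim\|C^\bullet_\psi\|^q\|\beta\|_{\ell^p}^q.
\]
I average in $t$ and apply Khinchine's inequality, noting that $f\mapsto f\bullet\psi$ is right linear, so the Rademacher sum passes through. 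This yields
\[
\int_{U_j}\Bigl(\sum_k|\beta_k|^2|(k_{w_k}\bullet\psi)_I|^2\Bigr)^{q/2}\mathrm e^{-\frac{\alpha q}{2}|z|^2}\,\mathrm dm\lesssim\|C^\bullet_\psi\|^q\|\beta\|_{\ell^p}^q.
\]
When $q\ge2$ I bound the $\ell^2$ sum below by the $\ell^q$ sum directly. When $q<2$ I use the usual splitting over a finite-overlap cover. Either way, I arrive at $\sum_k|\beta_k|^q\mathcal B_{U_j}(w_k)\lesssim\|C^\bullet_\psi\|^q\|\beta\|^q_{\ell^p}$. By duality between $\ell^{p/q}$ and $\ell^{s}$, this gives $\sum_k\mathcal B_{U_j}(w_k)^s\lesssim\|C^\bullet_\psi\|^{qs}$. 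The local comparison of $\mathcal B_{U_j}$ on $r$-disks then converts the lattice sum into the integral $\|\mathcal B_{U_j}\|_{L^s}^{s}$.

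I expect the main obstacle to be the lattice-to-integral step in (ii). $\mathcal B_{U_j}$ is not the Berezin transform of a single pull-back measure, so its local comparability on $r$-disks must be derived directly from $|k_w|\approx|k_{w'}|$ inside the matrix functional calculus. The relation $\Phi_I(k_w\bullet\psi)=\mathrm e^{-\frac\alpha2|w|^2}\exp(\alpha\overline w\Psi)$ is exact rather than comparable in modulus, so I would first try to bound $\mathcal B_{U_j}(w')$ by the $r$-average of $\mathcal B_{U_j}$ around $w$. The route is subharmonicity in $w$ of the integrand $\bigl|\mathrm e^{-\frac\alpha2|w|^2}\exp(\alpha\overline w\Psi(z))\bigr|^q$ times a Gaussian correction, which avoids pointwise comparison entirely.
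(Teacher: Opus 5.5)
\paragraph{Gap in part (ii) for $0<q<2$.}
Your part (ii) has a genuine gap when $0<q<2$. After Khinchine you control $\int_{U_j}\bigl(\sum_k|\beta_k|^2|(k_{w_k}\bullet\psi)_I|^2\bigr)^{q/2}\mathrm e^{-\frac{\alpha q}{2}|z|^2}\,\mathrm dm_{2,I}$. For $q<2$ the pointwise inequality $(\sum_k|x_k|^2)^{q/2}\ge\sum_k|x_k|^q$ fails. The finite-overlap splitting restores it only on the pieces $S_k$ of a cover. What you actually obtain is
$\sum_k|\beta_k|^q\int_{S_k}|(k_{w_k}\bullet\psi)_I|^q\mathrm e^{-\frac{\alpha q}{2}|z|^2}\,\mathrm dm_{2,I}\lesssim\|C_\psi^\bullet\|^q\|\{\beta_k\}\|_{\ell^p}^q$,
not the same bound with the full $\mathcal B_{U_j}(w_k)$.

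To get back to $\mathcal B_{U_j}$ you must control the tails. That means controlling the local masses $\Lambda_j^{c,\pm}(D(\cdot,r))$ and $\Lambda_j^{d,\pm}(D(\cdot,r))$ of the individual branch measures. The localized integrals do not give these directly: on each piece the integrand is the two-branch sum $\sum_\sigma c_\sigma k_{w_k}(\lambda_\sigma)$, which can cancel. Branch separation is the missing idea.

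The paper localizes to $E_m^\pm=U_0\cap\lambda_{\pm,0}^{-1}(D(a_m,r))$ with $3r<\sqrt{\varepsilon_0}$. There the $\pm$ Gaussian factor is at least $\mathrm e^{-\alpha r^2/2}$ and the other is at most $\mathrm e^{-2\alpha r^2}$. Hence the lattice masses of the densities $\bigl(\mathrm e^{-\alpha r^2/2}M_z(c_{\pm,0},\lambda_{\pm,0})-\mathrm e^{-2\alpha r^2}M_z(c_{\mp,0},\lambda_{\mp,0})\bigr)_+^q$ lie in $\ell^s$. The inequality $x^q+y^q\lesssim(Ax-By)_+^q+(Ay-Bx)_+^q$ ($A>B>0$) then separates the branches and gives $\Lambda_0^{c,\pm}(D(\cdot,r))\in L^s$. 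The complex $q<p$ Fock--Carleson theorem converts this into $\widetilde\Lambda_j^{c,\pm},\widetilde\Lambda_j^{d,\pm}\in L^s$. Only then does $\mathcal B_{U_j}(w)\lesssim\sum_\sigma\bigl(\widetilde\Lambda_j^{c,\sigma}(w)+\widetilde\Lambda_j^{d,\sigma}(\overline w)\bigr)$ close the argument.

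\paragraph{The local comparison is false.}
The comparison $\mathcal B_{\psi,I}(w)\lesssim\mathcal B_{\psi,I}(w')$ for $|w-w'|<r$, which you invoke in both parts, does not hold. The ratio $|k_w(z)|/|k_{w'}(z)|=\exp\{\alpha\operatorname{Re}(z(\overline w-\overline{w'}))-\frac\alpha2(|w|^2-|w'|^2)\}$ is unbounded in $z$. Already for $\psi(\xi)=\xi/2$ one computes $\mathcal B_{\psi,I}(w)=\frac{2\pi}{\alpha q}\mathrm e^{-\frac{3\alpha q}{8}|w|^2}$.

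What is true is one-sided. Subharmonicity in $w$ of $|(\exp(\alpha\overline w\Psi(z)))_{11}|^q$ and $|(\exp(\alpha w\Psi(z)))_{12}|^q$, together with the Fock local estimate, gives $\mathcal B_{U_j}(w)\lesssim\int_{D_I(w,r)}\mathcal B_{U_j}\,\mathrm dm_{2,I}$.

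That direction is all you need in (i). It yields $\sup_{\mathbb C_I}\mathcal B_{\psi,I}\lesssim\|\mathcal B_{\psi,I}\|_{L^s}$, which handles $U_1$ and the constant-branch case. On $U_0$, your pointwise bound \eqref{eq:composition-branch-density-vanishing-control}, the near-affine change of variables and Young's inequality need no smoothing. So (i) is a valid and more direct alternative to the paper's $E_w^\pm$ argument.

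In (ii), however, the one-sided bound is the wrong direction for turning $\{\mathcal B_{U_j}(w_k)\}\in\ell^s$ into $\mathcal B_{U_j}\in L^s$. The paper sidesteps this by working with disk masses, which are monotone in the disk.

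\paragraph{Repair for $q\ge2$.}
For $q\ge2$ your route can be rescued another way. The Khinchine bound holds uniformly for every translate $\{w_k+\theta\}$ of the lattice, because the constant in $\|\sum_k\gamma_k k_{w_k}\|_{F_\alpha^p(\mathbb C_I)}\lesssim\|\gamma\|_{\ell^p}$ depends only on the separation. Integrating $\sum_k\mathcal B_{U_j}(w_k+\theta)^s\lesssim\|C_\psi^\bullet\|^{qs}$ over $\theta$ in a fundamental cell gives $\|\mathcal B_{U_j}\|_{L^s}^s\lesssim\|C_\psi^\bullet\|^{qs}$. This is a genuine shortcut. For $0<q<2$ no such shortcut is available, and the branch-separated local-mass argument is still needed.
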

	
	\begin{proof}
		We first prove \textup{(i)}. Since
		\(\mathcal B_{U_j}\le\mathcal B_{\psi,I}\), one has
		\(\mathcal B_{U_j}\in L^s\). Choose \(r>0\) so small that
		\(3r<\sqrt{\varepsilon_0}\). We prove that
		\begin{equation}\label{eq:composition-Lambda-disk-Ls}
			\Lambda_j^{c,\pm}(D(\cdot,r))
			\in L^s(\mathbb C_I).
		\end{equation}
		The proof for the  measures $	\Lambda_j^{d,\pm}$ is identical.
		
		For the bounded region \(U_1\), the coefficient functions and the local eigenvalue
		branches are bounded, so
		\eqref{eq:composition-Lambda-disk-Ls} is immediate. On \(U_0\), assume first
		that \(\Delta\) is nonconstant. By \eqref{eq:composition-branch-affine-asymptotics}, after enlarging the exterior radius,  
		\[
		|\lambda_{\pm,0}(z)-a_\pm z-b_\pm|\le r/2.
		\]
		For
		\[
		u_\pm(w):=\frac{w-b_\pm}{a_\pm},
		\qquad
		E_w^\pm:=U_0\cap\lambda_{\pm,0}^{-1}(D(w,r)),
		\]
		we have
		\begin{equation}\label{eq:composition-preimage-disk-comparison}
			U_0\cap D\!\left(u_\pm(w),\frac{r}{2|a_\pm|}\right)
			\subset E_w^\pm
			\subset
		U_0\cap 	D\!\left(u_\pm(w),\frac{3r}{2|a_\pm|}\right).
		\end{equation}
		On \(E_w^+\),
		\[
		|\lambda_{+,0}(z)-w|<r,
		\qquad
		|\lambda_{-,0}(z)-w|>2r.
		\]
		Thus
		\begin{equation}\label{eq:composition-difference-lower-plus}
			\begin{aligned}
				&\left|
				\sum_{\sigma=\pm}
				c_{\sigma,0}(z)k_w(\lambda_{\sigma,0}(z))
				\right|
				\mathrm e^{-\frac\alpha2|z|^2} \ge
				\mathrm e^{-\frac{\alpha r^2}{2}}
				M_z(c_{+,0},\lambda_{+,0})
				-
				\mathrm e^{-2\alpha r^2}
				M_z(c_{-,0},\lambda_{-,0}).
			\end{aligned}
			\nonumber
		\end{equation}
		The analogous estimate with the signs interchanged holds on \(E_w^-\).
		Since \(\mathrm e^{-\alpha r^2/2}>\mathrm e^{-2\alpha r^2}\), the elementary
		inequality
		\[
		x^q+y^q
		\lesssim
		(Ax-By)_+^q+(Ay-Bx)_+^q,
		\qquad x,y\ge0,
		\quad A>B>0,
		\]
		combined with \eqref{eq:composition-preimage-disk-comparison}, the affine
		changes of variables \(w\mapsto u_\pm(w)\), and
		\(\mathcal B_{U_0}\in L^s\), gives
		\[
		u\longmapsto
		\int_{D(u,\delta)\cap U_0}
		M_z(c_{\pm,0},\lambda_{\pm,0})^q
		\,\mathrm d m_{2,I}(z)
		\in L^s(\mathbb C_I)
		\]
		for some, hence every, fixed \(\delta>0\). The outer inclusion in
		\eqref{eq:composition-preimage-disk-comparison} then gives
		\eqref{eq:composition-Lambda-disk-Ls}.
		
	If \(\Delta\equiv\Delta_0\ne0\) and the common slope of the two eigenvalue branches is
	nonzero, the same argument applies with exact affine maps. If both eigenvalue branches are constant, choose two kernel parameters outside the null set on which
		\(\mathcal B_{U_0}< \infty\), and solve the same fixed linear system used in
		the proof of Proposition~\ref{prop:composition-good-p-le-q}. This shows that
		the measures are finite point masses and hence satisfy
		\eqref{eq:composition-Lambda-disk-Ls}.
		
		By the complex criterion for \(0<q<p<\infty\),
		see \cite[Theorem~2.6]{MR3248473}, condition
		\eqref{eq:composition-Lambda-disk-Ls} is equivalent to the
		\((p,q)\)-Fock--Carleson measure. In this range boundedness and compactness of
		the embedding are equivalent, so the measures are vanishing. The estimates
		above also give the asserted quantitative control.
 
		\smallskip
		\noindent\textup{(ii)}
		Assume that \(C_\psi^\bullet\) is bounded. The kernel test gives $
		\mathfrak B_\infty
		\lesssim
		\|C_\psi^\bullet\|$. 
Thus Theorem~\ref{thm:composition-spectral-rigidity} applies, and the
good/singular decomposition of Lemma~\ref{lem:composition-good-singular-decomposition}
is available. On the bounded
		region \(U_1\), they imply directly that
		\[
		\Lambda_1^{c,\pm}(D(\cdot,r)),
		\quad
		\Lambda_1^{d,\pm}(D(\cdot,r))
		\in L^s(\mathbb C_I),
		\]
		with norms controlled by \(C\|C_\psi^\bullet\|^q\).
		
		It remains to treat \(U_0\). Let \(\{a_m\}\) be an \(r\)-lattice in \(\mathbb C_I\), see \cite[Section~2]{MR2934601}.  Let
		\(\{\beta_m\}\in\ell^p\), and put
		\[
		H_t(\zeta)
		:=
		\sum_m r_m(t)\beta_m k_{a_m}(\zeta).
		\]
		Let \(f_t\) be the slice regular extension satisfying
		\((f_t)_I=H_t\). The boundedness of
		\(C_\psi^\bullet\), Fubini's theorem, and Khinchine's inequality give
		\begin{equation}\label{eq:composition-Khinchine-good}
			\begin{aligned}
				&\int_{U_0}
				\left(
				\sum_m|\beta_m|^2
				\left|
				\sum_{\sigma=\pm}
				c_{\sigma,0}(z)
				k_{a_m}(\lambda_{\sigma,0}(z))
				\right|^2
				\right)^{q/2}
				\mathrm e^{-\frac{\alpha q}{2}|z|^2}
				\,\mathrm d m_{2,I}(z)\\
				&\qquad\lesssim
				\|C_\psi^\bullet\|^q
				\|\{\beta_m\}\|_{\ell^p}^q.
			\end{aligned}
		\end{equation}
		
		Set
		\[
		E_m^\pm
		:=
		U_0\cap\lambda_{\pm,0}^{-1}(D(a_m,r))
		\]
		and
		\[
		\alpha_m^{c,\pm}
		:=
		\int_{E_m^\pm}
		\left|
		\sum_{\sigma=\pm}
		c_{\sigma,0}(z)
		k_{a_m}(\lambda_{\sigma,0}(z))
		\right|^q
		\mathrm e^{-\frac{\alpha q}{2}|z|^2}
		\,\mathrm d m_{2,I}(z).
		\]
		The finite-overlap property of the lattice and
		\eqref{eq:composition-Khinchine-good} yield
		\[
		\sum_m|\beta_m|^q\alpha_m^{c,\pm}
		\lesssim
		\|C_\psi^\bullet\|^q
		\|\{\beta_m\}\|_{\ell^p}^q.
		\]
		By duality between \(\ell^{p/q}\) and \(\ell^s\),
		\[
		\{\alpha_m^{c,\pm}\}_m\in\ell^s,
		\qquad
		\|\{\alpha_m^{c,\pm}\}\|_{\ell^s}^{1/q}
		\lesssim
		\|C_\psi^\bullet\|.
		\]
		
	The remainder is the standard argument used in the proof of
	part \textup{(i)} and in
	\cite[Lemma~2.3 and the proof of Theorem~A]{MR4811250}. Namely, the
	lattice masses of the push-forward measures with densities
	\[
	\left(
	\mathrm e^{-\frac{\alpha r^2}{2}}
	M_z(c_{\pm,0},\lambda_{\pm,0})
	-
	\mathrm e^{-2\alpha r^2}
	M_z(c_{\mp,0},\lambda_{\mp,0})
	\right)_+^q
	\]
	are controlled by \(\alpha_m^{c,\pm}\). Lemma~2.3 of
	\cite{MR4811250} therefore gives their fixed-radius local mass
	functions in \(L^s\). Combining the two signs and using
	\eqref{eq:composition-preimage-disk-comparison} yields
	\[ 
	\|\Lambda_0^{c,\pm}(D(\cdot,r))\|_{L^s}^{1/q}
	\lesssim
	\|C_\psi^\bullet\|.
	\]
	The globally affine and constant-eigenvalue-branch cases are treated as in
	part \textup{(i)}.  Repeating the argument with \(F=0\) and the
		reflected lattice gives the corresponding estimates for
		\(\Lambda_j^{d,\pm}\).
		
		By \cite[Theorem~2.7]{MR3248473}, the Berezin transforms of all these
		measures belong to \(L^s(\mathbb C_I)\), with the same norm control.
		The local spectral formula gives, for \(j=0,1\),
		\[
		\mathcal B_{U_j}(w)
		\lesssim
		\sum_{\sigma=\pm}
		\left(
		\widetilde\Lambda_j^{c,\sigma}(w)
		+
		\widetilde\Lambda_j^{d,\sigma}(\overline w)
		\right).
		\]
		Since reflection preserves planar measure, it follows that
		\[
		\|\mathcal B_{U_j}\|_{L^s}^{1/q}
		\lesssim
		\|C_\psi^\bullet\|,
		\qquad j=0,1.
		\]
	\end{proof}

\subsection{Singular-region estimates}
\label{subsec:composition-singular-region}

\begin{proposition} 
	\label{prop:composition-singular-estimates}
	Use the good/singular notation introduced above.
	
	\smallskip
	\noindent\textup{(i)}
	If \(\Delta\) is nonconstant, then, for every \(\varepsilon>0\),
	the singular neighborhood \(E\) may be chosen so that
	\[
	\int_E
	|(f\bullet\psi)_I(z)|^q
	\mathrm e^{-\frac{\alpha q}{2}|z|^2}
	\,\mathrm d m_{2,I}(z)
	\le
	\varepsilon
	\|f\|_{F_\alpha^p(\mathbb H)}^q
	\]
	for every \(f\in F_\alpha^p(\mathbb H)\).
	
	\smallskip
	\noindent\textup{(ii)}
	Let \(0<q<p<\infty\), and put \(s=p/(p-q)\). For every fixed
	choice of \(E\),
	\[
	\mathcal B_E\in L^s(\mathbb C_I).
	\]
	Moreover,
	\begin{equation}\label{eq:composition-singular-control-by-BE}
		\int_E
		|(f\bullet\psi)_I(z)|^q
		\mathrm e^{-\frac{\alpha q}{2}|z|^2}
		\,\mathrm d m_{2,I}(z)
		\lesssim
		\|\mathcal B_E\|_{L^s}
		\|f\|_{F_\alpha^p(\mathbb H)}^q
	\end{equation}
	for every \(f\in F_\alpha^p(\mathbb H)\). If \(C_\psi^\bullet\)
	is bounded, then
	\begin{equation}\label{eq:composition-BE-operator-bound}
		\|\mathcal B_E\|_{L^s}^{1/q}
		\lesssim
		\|C_\psi^\bullet\|.
	\end{equation}
\end{proposition}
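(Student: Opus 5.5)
The whole proof will rest on the compact-set estimate \eqref{eq:composition-compact-set-estimate} and on the fact, guaranteed by Lemma~\ref{lem:composition-good-singular-decomposition}, that \(E\) is a finite union of disjoint closed disks centered at the zeros of \(\Delta\). Since \(\Delta\) is nonconstant, it has at most two zeros and all of them lie in a fixed bounded set. So for any choice of radii not exceeding a fixed \(r_0\), we get \(E\subset K_0\), where \(K_0:=\overline{D_I(0,R_0)}\) is a fixed compact set. We never need a holomorphic square root of \(\Delta\) on \(E\). Instead we work directly with the entries of \(\Phi_I(f\bullet\psi)\) through \eqref{eq:Xf-Yf-global}, which are entire; Remark~\ref{rem:composition-branch-free-kernel} gives the branch-free form of these entries for kernels.

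For \textup{(i)}, apply \eqref{eq:composition-compact-set-estimate} on \(K_0\) to get \(\sup_{z\in K_0}\|\Phi_I(f\bullet\psi)(z)\|\le C_{K_0}\|f\|_{F_\alpha^p(\mathbb H)}\). Then \eqref{eq:composition-first-row-equivalence} gives
\[
\int_E|(f\bullet\psi)_I|^q\mathrm e^{-\frac{\alpha q}{2}|z|^2}\,\mathrm dm_{2,I}\lesssim C_{K_0}^q\,m_{2,I}(E)\,\|f\|_{F_\alpha^p(\mathbb H)}^q .
\]
The constant \(C_{K_0}\) does not depend on \(E\). Hence shrinking the radii so that \(m_{2,I}(E)\le\varepsilon/(CC_{K_0}^q)\) proves \textup{(i)}. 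The only thing to check is that \eqref{eq:composition-compact-set-estimate} holds uniformly in \(f\) over the whole space \(F_\alpha^p\), not only for sequences. This follows from the Cauchy integral representation of \(F(\Psi),G(\Psi),F^\#(\Psi),G^\#(\Psi)\) on a circle \(|\zeta|=R'\) with \(R'>\sup_{K_0}\|\Psi\|\), combined with the pointwise Fock bound \(|F(\zeta)|\lesssim \mathrm e^{\frac\alpha2|\zeta|^2}\|F\|_{F^p_\alpha}\) and \eqref{eq:splitting-fock-norm-comparison}.

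For \textup{(ii)}, I first show that \(\mathcal B_E\in L^s\) by a pointwise bound for kernels, uniform in \(z\in K_0\). The plan is to bound the entries by \(|\exp(\alpha\overline w\Psi(z))|\le \mathrm e^{\alpha|w|\,\|\Psi(z)\|}\) and use \(\|\Psi(z)\|\le M\) on \(K_0\). This gives
\[
\mathcal B_E(w)\lesssim m_{2,I}(E)\,\mathrm e^{-\frac{\alpha q}{2}|w|^2+\alpha qM|w|},
\]
which is Gaussian-decaying and hence lies in every \(L^s\). Next I prove \eqref{eq:composition-singular-control-by-BE} by the standard Carleson argument. Let \(\mu_E\) be the measure \(\chi_E\,\mathrm e^{-\frac{\alpha q}{2}|z|^2}\mathrm dm_{2,I}\), and consider the map \(f\mapsto \Phi_I(f\bullet\psi)\) restricted to \(E\). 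I would bound the pointwise values of the entries on \(E\) using local averages over slightly enlarged disks \(D_I(\zeta,r)\) of \(|F|,|G|\) evaluated at points \(\zeta\) near the spectrum of \(\Psi(z)\). Because \(E\) is bounded, every \(|F(\zeta)|^q\) weight is comparable on a fixed compact set. This makes the left side of \eqref{eq:composition-singular-control-by-BE} at most a constant times \(\|f\|^q\).

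To obtain exactly the factor \(\|\mathcal B_E\|_{L^s}\), I would invoke the complex \((p,q)\) criterion \cite[Theorem~2.7]{MR3248473}, through a linearization in \(f\) against the kernel test. Writing \(f\) via the reproducing formula \(F(\zeta)=\langle F,K_\zeta\rangle\), Hölder's inequality in \(w\) with exponents \(p/q\) and \(s\) yields
\[
\int_E|(f\bullet\psi)_I|^q\,\mathrm d\mu\lesssim \int\mathcal B_E(w)\,|F(w)|^q\mathrm e^{-\frac{\alpha q}{2}|w|^2}\,\mathrm dm_{2,I}(w)\le\|\mathcal B_E\|_{L^s}\|f\|^q .
\]
This step is clean when \(q\ge1\); for \(q<1\), I would use the lattice/atomic decomposition and subharmonicity instead. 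I expect this linearization to be the main obstacle, because \(f\bullet\psi\) is not a scalar pull-back on \(E\).

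Finally, \eqref{eq:composition-BE-operator-bound} follows as in Proposition~\ref{prop:composition-good-q-less-p}\textup{(ii)}. Take the random lattice sums \(H_t=\sum_m r_m(t)\beta_mk_{a_m}\) and apply boundedness, Khinchine's inequality, and Fubini restricted to \(E\). Then use \(\ell^{p/q}\)–\(\ell^s\) duality, and pass from the lattice masses to \(\mathcal B_E\) via the subharmonic local estimate in \(w\).
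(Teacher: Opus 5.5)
The gap is in \eqref{eq:composition-singular-control-by-BE} and \eqref{eq:composition-BE-operator-bound}; the missing idea is the trivial reverse bound $\mathcal B_E(0)\lesssim\|\mathcal B_E\|_{L^s}$. Your part (i) and your proof that $\mathcal B_E\in L^s$ are correct and match the paper's argument. Those steps are the compact-set estimate on a fixed disk containing every admissible $E$, and the Gaussian bound on $\mathcal B_E(w)$ coming from \eqref{eq:composition-kernel-matrix-exponential}.

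For \eqref{eq:composition-singular-control-by-BE}, you already know from \eqref{eq:composition-compact-set-estimate} that the left side is $\lesssim \mathcal B_E(0)\,\|f\|_{F_\alpha^p(\mathbb H)}^q$, where $\mathcal B_E(0)=\int_E \mathrm e^{-\frac{\alpha q}{2}|z|^2}\,\mathrm d m_{2,I}(z)$ because $k_0\equiv 1$. Instead of comparing this with $\|\mathcal B_E\|_{L^s}$, you try to produce that factor through a reproducing-formula/H\"older ``linearization''. That step is not carried out, and as displayed it is false. Its right-hand side involves only $F$: for $f\equiv J$ (so $F=0$, $G=1$, $f\bullet\psi\equiv J$) the left side equals $\mathcal B_E(0)>0$ while the right side vanishes. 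For $q<1$ you only gesture at a lattice argument. You also note yourself that $f\bullet\psi$ is not a scalar pull-back on $E$, so the complex Carleson theorem cannot simply be invoked.

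The missing reverse bound follows from \eqref{eq:composition-kernel-matrix-exponential}, which gives $\Phi_I(k_w\bullet\psi)\to \mathrm I_2$ uniformly on the compact set $E$ as $w\to 0$.
\begin{itemize}
\item Hence $|(k_w\bullet\psi)_I|\ge \frac12$ on $E$ for $|w|<\delta$.
\item So $\mathcal B_E\ge 2^{-q}\mathcal B_E(0)$ on $D_I(0,\delta)$, and $\|\mathcal B_E\|_{L^s}\ge 2^{-q}(\pi\delta^2)^{1/s}\mathcal B_E(0)$.
\end{itemize}
With this, \eqref{eq:composition-singular-control-by-BE} follows immediately from the compact-set estimate. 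On a compact set, the testing function is controlled above (your Gaussian bound) and below by the single number $\mathcal B_E(0)$, so no Carleson machinery is needed.

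The same observation makes your Khinchine/lattice route to \eqref{eq:composition-BE-operator-bound} unnecessary. Its final step, passing from lattice masses to $\mathcal B_E$, is also not justified. On $E$ there are no holomorphic eigenvalue branches, hence no analogue of the preimage sets $E_m^\pm$ that drive the finite-overlap extraction in Proposition~\ref{prop:composition-good-q-less-p}(ii). The paper just tests the operator on $k_0\equiv 1$:
\[
\mathcal B_E(0)\le \mathcal B_{\psi,I}(0)\approx \|C_\psi^\bullet k_0\|_{F_\alpha^q(\mathbb H)}^q\lesssim \|C_\psi^\bullet\|^q .
\]
It then combines this with $\|\mathcal B_E\|_{L^s}\lesssim \mathcal B_E(0)$, which is exactly the Gaussian estimate you already proved.
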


\begin{proof}
	If \(\Delta\) is constant, then \(E=\varnothing\) under
	\eqref{eq:composition-constant-Delta-convention}, and there is
	nothing to prove. Assume therefore that \(\Delta\) is nonconstant.
	
	Choose \(R_0>0\) such that
	\[
	Z(\Delta)\subset D(0,R_0)
	\]
	and take \(E\subset\overline{D(0,R_0)}\). By
	\eqref{eq:composition-compact-set-estimate},
	\[
	\sup_{z\in\overline{D(0,R_0)}}
	\|\Phi_I(f\bullet\psi)(z)\|
	\lesssim
	\|f\|_{F_\alpha^p(\mathbb H)}.
	\]
	Hence \eqref{eq:composition-first-row-equivalence} gives
	\[
	\int_E
	|(f\bullet\psi)_I(z)|^q
	\mathrm e^{-\frac{\alpha q}{2}|z|^2}
	\,\mathrm d m_{2,I}(z)
	\lesssim
	m_{2,I}(E)
	\|f\|_{F_\alpha^p(\mathbb H)}^q.
	\]
	Since the set \(E\) may be chosen with arbitrarily small
	 planar measure, part \textup{(i)} follows.
	
	We now prove part \textup{(ii)}. Put
	\[
	M_E:=\sup_{z\in E}\|\Psi(z)\|<\infty .
	\]
	By the   formula
	\eqref{eq:composition-kernel-matrix-exponential}, we have, for \(z\in E\),
	\[
	\left|\bigl(\Phi_I(k_w\bullet\psi)\bigr)_{11}(z)\right|
	+
	\left|\bigl(\Phi_I(k_w\bullet\psi)\bigr)_{12}(z)\right|
	\lesssim
	\mathrm e^{-\frac{\alpha}{2}|w|^2+\alpha M_E|w|}.
	\]
	Since \(E=E^\#\), we have
	\[
	\sup_{z\in E}
	\|\Phi_I(k_w\bullet\psi)(z)\|
	\lesssim_E
	\mathrm e^{-\frac{\alpha}{2}|w|^2+\alpha M_E|w|}.
	\]
	Since \(k_0\equiv1\), we have
	\[
	\mathcal B_E(0)
	=
	\int_E
	\mathrm e^{-\frac{\alpha q}{2}|z|^2}
	\,\mathrm d m_{2,I}(z).
	\]
	It follows that
	\begin{equation}\label{eq:composition-BE-Gaussian}
		\mathcal B_E(w)
		\lesssim 
		\mathcal B_E(0)
		\mathrm e^{-\frac{\alpha q}{2}|w|^2+\alpha qM_E|w|}
		\lesssim 
		\mathcal B_E(0)\mathrm e^{-c|w|^2},
	\end{equation}
	for some \(c>0\). Consequently,
	\[
	\mathcal B_E\in L^s(\mathbb C_I),
	\qquad
	\|\mathcal B_E\|_{L^s}
	\lesssim
	\mathcal B_E(0).
	\]
	
	Conversely, by \eqref{eq:composition-kernel-matrix-exponential},
	\[
	\Phi_I(k_w\bullet\psi)\longrightarrow \mathrm I_2
	\]
	uniformly on \(E\) as \(w\to0\). Hence there exists \(\delta>0\) such that
	\[
	|(k_w\bullet\psi)_I(z)|\ge\frac12,
	\qquad z\in E,\quad |w|<\delta.
	\]
	Therefore
	\[
	\mathcal B_E(w)
	\ge
	2^{-q}\mathcal B_E(0),
	\qquad |w|<\delta,
	\]
	and hence
	\begin{equation}\label{eq:composition-weighted-area-by-BE}
		\mathcal B_E(0)
		\lesssim
		\|\mathcal B_E\|_{L^s}.
	\end{equation}
	
	Now let \(f\in F_\alpha^p(\mathbb H)\). By
	\eqref{eq:composition-compact-set-estimate} and
	\eqref{eq:composition-first-row-equivalence},
	\[
	\begin{aligned}
		&\int_E
		|(f\bullet\psi)_I(z)|^q
		\mathrm e^{-\frac{\alpha q}{2}|z|^2}
		\,\mathrm d m_{2,I}(z)  \lesssim 
		\|f\|_{F_\alpha^p(\mathbb H)}^q
		\int_E
		\mathrm e^{-\frac{\alpha q}{2}|z|^2}
		\,\mathrm d m_{2,I}(z)  =
		\mathcal B_E(0)\,
		\|f\|_{F_\alpha^p(\mathbb H)}^q .
	\end{aligned}
	\]
	Combining this with \eqref{eq:composition-weighted-area-by-BE} gives
	\[
	\int_E
	|(f\bullet\psi)_I(z)|^q
	\mathrm e^{-\frac{\alpha q}{2}|z|^2}
	\,\mathrm d m_{2,I}(z)
	\lesssim
	\|\mathcal B_E\|_{L^s}
	\|f\|_{F_\alpha^p(\mathbb H)}^q,
	\]
	which is \eqref{eq:composition-singular-control-by-BE}.
	
	Finally, if \(C_\psi^\bullet\) is bounded, then
	\[
	\mathcal B_E(0)
	\le
	\mathcal B_{\psi,I}(0)
	\approx
	\|C_\psi^\bullet k_0\|_{F_\alpha^q(\mathbb H)}^q
	\le
	\|C_\psi^\bullet\|^q
	\|k_0\|_{F_\alpha^p(\mathbb H)}^q
	\lesssim
	\|C_\psi^\bullet\|^q.
	\]
	Together with the first consequence of
	\eqref{eq:composition-BE-Gaussian}, namely
	\[
	\|\mathcal B_E\|_{L^s}
	\lesssim
	\mathcal B_E(0),
	\]
	this yields
	\[
	\|\mathcal B_E\|_{L^s}^{1/q}
	\lesssim
	\|C_\psi^\bullet\|,
	\]
	as claimed in \eqref{eq:composition-BE-operator-bound}.
\end{proof}

	\subsection{Proofs of the main theorems}
	\label{subsec:composition-main-proofs}
	
	\begin{proof}[Proof of Theorem~\ref{thm:composition-p-le-q}]
		Fix \(I\in\mathbb S\), and choose \(J\perp I\). Write $\psi_I=P+QJ$, where $P,Q\in \mathcal{O}(\mathbb{C}_I)$.
		
		\smallskip
		\noindent\emph{Necessity of the boundedness condition.}
		For \(w\in\mathbb C_I\), the normalized kernels satisfy
		\(\|k_w\|_{F_\alpha^p(\mathbb H)}\approx1\). By the definition of the Fock
		norm,
		\begin{equation}\label{eq:composition-kernel-norm-test}
			\mathcal B_{\psi,I}(w)
			\approx
			\|C_\psi^\bullet k_w\|_{F_\alpha^q(\mathbb H)}^q.
		\end{equation}
		Thus boundedness implies
		\[
		\sup_w\mathcal B_{\psi,I}(w)^{1/q}
		\lesssim
		\|C_\psi^\bullet\|.
		\]
		
	\smallskip
	\noindent\emph{Sufficiency.}
	Assume \(\mathfrak B_\infty<\infty\). The case \(Q\equiv0\) was
	settled in Subsection~\ref{subsec:composition-scalar-reductions}.
	Assume henceforth that \(Q\not\equiv0\).
	
	By Theorem~\ref{thm:composition-spectral-rigidity}, \(\Delta\) is
	either a nonzero constant or a quadratic polynomial with negative
	leading coefficient. If \(\Delta\) is nonconstant, choose the singular
	neighborhood \(E\) in
Proposition~\ref{prop:composition-singular-estimates}\textup{(i)} so that
	\[
	\int_E
	|(f\bullet\psi)_I(z)|^q
	\mathrm e^{-\frac{\alpha q}{2}|z|^2}
	\,\mathrm d m_{2,I}(z)
	\le
\mathfrak  B_\infty^q
	\|f\|_{F_\alpha^p(\mathbb H)}^q.
	\]
	Complete \(E\) to the good/singular decomposition of
	Lemma~\ref{lem:composition-good-singular-decomposition}. In the
	constant-discriminant case, use the convention
	\eqref{eq:composition-constant-Delta-convention}.
	
	By Proposition~\ref{prop:composition-good-p-le-q}, all good-region
	measures are \((p,q)\)-Fock--Carleson measures, with embedding
	constants controlled by \(C\mathfrak B_\infty^q\). Hence
	\begin{equation}\label{eq:composition-good-operator-bound-p-le-q}
		\int_{U_0\cup U_1}
		|(f\bullet\psi)_I(z)|^q
		\mathrm e^{-\frac{\alpha q}{2}|z|^2}
		\,\mathrm d m_{2,I}(z)
		\lesssim
		\mathfrak B_\infty^q
		\|f\|_{F_\alpha^p(\mathbb H)}^q.
		\nonumber
	\end{equation}
 
	Combining the good and singular estimates proves boundedness and the
	upper norm estimate. Together with the kernel-test lower estimate,
	this gives
	\[
	\|C_\psi^\bullet\|
	\approx
	\mathfrak B_\infty.
	\]
		
	\smallskip
	\noindent\emph{Compactness.}
	If \(C_\psi^\bullet\) is compact, then the family \(\{k_w\}\) is
	bounded in \(F_\alpha^p(\mathbb H)\) and converges to zero uniformly
	on compact subsets as \(|w|\to\infty\). Hence
	\eqref{eq:composition-kernel-norm-test} gives for every \(I\in\mathbb S\),
	\[
	\mathcal B_{\psi,I}(w)\longrightarrow0
	\qquad
	\text{as }|w|\to\infty,\quad w\in\mathbb C_I.
	\]
	
	Conversely, assume that for some \(I\in\mathbb S\),
	\[
	\mathcal B_{\psi,I}(w)\longrightarrow0
	\qquad
	\text{as }|w|\to\infty,\quad w\in\mathbb C_I.
	\]
	The case \(Q\equiv0\) follows from the scalar reduction in
	Subsection~\ref{subsec:composition-scalar-reductions}. Assume
	\(Q\not\equiv0\). Proposition~\ref{prop:composition-good-p-le-q}
	shows that all good-region measures are vanishing
	\((p,q)\)-Fock--Carleson measures.
	
	Let \(\{f_n\}\) be bounded in \(F_\alpha^p(\mathbb H)\) and converge
	to zero uniformly on compact subsets. The vanishing Carleson
	embeddings imply
	\[
	\int_{U_0\cup U_1}
	|(f_n\bullet\psi)_I(z)|^q
	\mathrm e^{-\frac{\alpha q}{2}|z|^2}
	\,\mathrm d m_{2,I}(z)
	\longrightarrow0.
	\]
	If \(E\ne\varnothing\), then
	\eqref{eq:composition-compact-open-continuity} gives
	\[
	\sup_{z\in E}
	\|\Phi_I(f_n\bullet\psi)(z)\|
	\longrightarrow0.
	\]
	Consequently,
	\[
	\int_E
	|(f_n\bullet\psi)_I(z)|^q
	\mathrm e^{-\frac{\alpha q}{2}|z|^2}
	\,\mathrm d m_{2,I}(z)
	\longrightarrow0.
	\]
	In the constant-discriminant case \(E=\varnothing\). Thus
	\[
	\|C_\psi^\bullet f_n\|_{F_\alpha^q(\mathbb H)}
	\longrightarrow0,
	\]
	and the compactness criterion yields that \(C_\psi^\bullet\) is
	compact.
		
		Since the argument may be carried out on any fixed slice, a condition holding
		on one slice implies boundedness or compactness, and hence the corresponding
		condition on every slice.
	\end{proof}
	
	\begin{proof}[Proof of Theorem~\ref{thm:composition-q-less-p}]
		Put \(s=p/(p-q)\). The implication compact \(\Rightarrow\) bounded is
		immediate.
		
	\smallskip
	\noindent\emph{The testing condition implies compactness.}
	Assume $
	\mathcal B_{\psi,I}\in L^s(\mathbb C_I)$. 
	The case \(Q\equiv0\) was settled in
	Subsection~\ref{subsec:composition-scalar-reductions}. Assume
	\(Q\not\equiv0\).
	
	Since \(\mathfrak B_s<\infty\), 
	Theorem~\ref{thm:composition-spectral-rigidity} and the
	good/singular decomposition apply. By
	Proposition~\ref{prop:composition-good-q-less-p}\textup{(i)}, all
	good-region measures are vanishing
	\((p,q)\)-Fock--Carleson measures.
	
	Let \(\{f_n\}\) be bounded in \(F_\alpha^p(\mathbb H)\) and converge
	to zero uniformly on compact subsets. Writing
\[
(f_n)_I=F_n+G_nJ,
\]
we see from \eqref{eq:composition-local-four-term-11} and
\eqref{eq:composition-local-four-term-12} that the good-region
contributions are finite sums of vanishing Carleson embeddings applied to
\[
F_n,\qquad G_n,\qquad F_n^\#,\qquad G_n^\#.
\]
	Hence
	\[
	\int_{U_0\cup U_1}
	|(f_n\bullet\psi)_I(z)|^q
	\mathrm e^{-\frac{\alpha q}{2}|z|^2}
	\,\mathrm d m_{2,I}(z)
	\longrightarrow0.
	\]
	By \eqref{eq:composition-compact-open-continuity},
	\[
	\Phi_I(f_n\bullet\psi)\longrightarrow0
	\quad\text{uniformly on }E,
	\]
	and therefore
	\[
	\int_E
	|(f_n\bullet\psi)_I(z)|^q
	\mathrm e^{-\frac{\alpha q}{2}|z|^2}
	\,\mathrm d m_{2,I}(z)
	\longrightarrow0.
	\]
	Thus
	\[
	\|C_\psi^\bullet f_n\|_{F_\alpha^q(\mathbb H)}
	\longrightarrow0,
	\]
	and \(C_\psi^\bullet\) is compact.
		
		The quantitative good-region estimates, together with
		Proposition~\ref{prop:composition-singular-estimates}\textup{(ii)} and
		\(\mathcal B_E\le\mathcal B_{\psi,I}\), also give
		\begin{equation}\label{eq:composition-upper-norm-q-less-p}
			\|C_\psi^\bullet\|
			\lesssim
			\|\mathcal B_{\psi,I}\|_{L^s}^{1/q}.
		\end{equation}
		
		\smallskip
		\noindent\emph{Boundedness implies the testing condition.}
		Assume \(C_\psi^\bullet\) is bounded. If \(Q\equiv0\), the scalar complex
		criterion gives \(\mathcal B_{\psi,I}\in L^s\).
		
		Assume \(Q\not\equiv0\). The kernel test gives
		\(\mathfrak B_\infty<\infty\), so
		Theorem~\ref{thm:composition-spectral-rigidity} applies. By
		Proposition~\ref{prop:composition-good-q-less-p}\textup{(ii)},
		\[
		\mathcal B_{U_0},\mathcal B_{U_1}\in L^s,
		\qquad
		\|\mathcal B_{U_j}\|_{L^s}^{1/q}
		\lesssim
		\|C_\psi^\bullet\|.
		\]
		Proposition~\ref{prop:composition-singular-estimates}\textup{(ii)}
		gives the same conclusion for \(\mathcal B_E\). By \eqref{eq:composition-B-decomposition},
		\[
		\mathcal B_{\psi,I}\in L^s,
		\qquad
		\|\mathcal B_{\psi,I}\|_{L^s}^{1/q}
		\lesssim
		\|C_\psi^\bullet\|.
		\]
		Together with \eqref{eq:composition-upper-norm-q-less-p}, this proves the norm
		equivalence and all three equivalences.
		
		As before, validity on one slice implies boundedness or compactness, and hence
		validity on every slice.
	\end{proof}
	
\subsection{Globally affine eigenvalue functions do not characterize boundedness}
	
	A fundamental distinction from the scalar complex theory should be
	emphasized. In the complex setting, boundedness of a composition operator is
	characterized by the affine form of its inducing symbol; see
	\cite[Corollaries~3.5 and~3.6]{MR3905527}. No analogous characterization can
	be formulated solely in terms of the local eigenvalue branches of \(\Psi\). 
Even when the eigenvalue branches extend to globally affine contractions,
uncontrolled growth of the spectral coefficients
may prevent \(C_\psi^\bullet\) from being bounded.
	
\begin{example}\label{ex:affine-branches-not-sufficient}
	Fix \(I,J\in\mathbb S\) with \(J\perp I\). For \(m\in\{1,3\}\), let
	\(\psi_m\in\mathcal{SR}(\mathbb H)\) be the slice regular extension of
	\[
	(\psi_m)_I(z)
	=
	\frac12 z+I\cos(z^m)+\sin(z^m)J,
	\qquad z\in\mathbb C_I.
	\]
Write
	\[
	P_m(z)=\frac12z+I\cos(z^m),
	\qquad
	Q_m(z)=\sin(z^m).
	\]
Then the corresponding matrix symbol is
	\[
	\Phi (\psi_m)(z)
	=
	\begin{pmatrix}
		\frac12z+I\cos(z^m)&\sin(z^m)\\[2pt]
		-\sin(z^m)&\frac12z-I\cos(z^m)
	\end{pmatrix},
	\]
and
	\[
	\tau_m(z)=z,
	\qquad
	\Delta_m(z)
	=
	-4\cos^2(z^m)-4\sin^2(z^m)
	=-4.
	\]
	Choosing \(\rho=2I\), we obtain, for both \(m=1\) and \(m=3\),
	\[
	\lambda_\pm(z)=\frac12z\pm I.
	\]
In particular, the two symbols have exactly the same globally affine eigenvalue
functions, and both slopes have modulus strictly less than one. Their
	spectral coefficients are
	\[
	c_\pm^{(m)}(z)
	=
	\frac{1\pm\cos(z^m)}2,
	\qquad
	d_\pm^{(m)}(z)
	=
	\mp\frac I2\sin(z^m).
	\]
	
	For \(m=1\),
	\[
	|c_\pm^{(1)}(z)|+|d_\pm^{(1)}(z)|
	\lesssim
	\mathrm e^{|z|},
	\]
	whereas
	\[
	|\lambda_\pm(z)|^2-|z|^2
	\le
	-\frac34|z|^2+C(1+|z|).
	\]
	Consequently, for some \(c_0>0\),
	\[
	M_z(c_\pm^{(1)},\lambda_\pm)
	+
	M_z(d_\pm^{(1)},\lambda_\pm)
	\lesssim
	\mathrm e^{-c_0|z|^2+C|z|}.
	\]
Using \eqref{eq:composition-local-four-term-11} and \eqref{eq:composition-local-four-term-12}, it follows 
that 
\[
\mathcal B_{\psi_1,I}(w)\to0
\qquad (|w|\to\infty),
\]
and
\[
\mathcal B_{\psi_1,I}\in L^s(\mathbb C_I)
\qquad\text{for every }s>0.
\]
By Theorems~\ref{thm:composition-p-le-q} and
\ref{thm:composition-q-less-p}, the operator $
C_{\psi_1}^\bullet:
F_\alpha^p(\mathbb H)\longrightarrow F_\alpha^q(\mathbb H)$ 
is compact for every \(0<p,q<\infty\).
	
	In contrast, \(C_{\psi_3}^\bullet\) is not bounded between any such pair of
	Fock spaces. Indeed, the polynomial \(f(\xi)=\xi\) belongs to
	\(F_\alpha^p(\mathbb H)\) for every \(p>0\). 
	If \(C_{\psi_3}^\bullet\) were bounded into \(F_\alpha^q(\mathbb H)\), then
	the splitting norm equivalence would imply
	\[
	Q_3(z)=\sin(z^3)\in F_\alpha^q(\mathbb C_I).
	\]
	However, the standard point-evaluation estimate on the complex Fock space
	would then give
	\[
	|\sin(z^3)|
	\mathrm e^{-\frac{\alpha}{2}|z|^2}
	\lesssim
	\|Q_3\|_{F_\alpha^q(\mathbb C_I)},
	\qquad z\in\mathbb C_I.
	\]
	Taking
	\[
	z_r=r\mathrm e^{I\pi/6},
	\qquad r>0,
	\]
	we have \(z_r^3=Ir^3\), and hence
	\[
	|\sin(z_r^3)|
	\mathrm e^{-\frac{\alpha}{2}|z_r|^2}
	=
	\sinh(r^3)\mathrm e^{-\frac{\alpha}{2}r^2}
	\longrightarrow\infty.
	\]
	This contradiction proves that \(C_{\psi_3}^\bullet\) is unbounded.
	
Thus two quaternionic symbols may have identical globally affine eigenvalue
functions, with strictly contractive slopes,  while inducing respectively a
	compact and an unbounded composition operator. The boundedness of
	\(C_\psi^\bullet\) therefore depends essentially on the spectral
	coefficients \(c_\pm,d_\pm\), and cannot be determined from the global eigenvalue functions alone.
\end{example}
\section{Applications to Weighted Composition and Volterra-Type Operators}
\label{sec:applications}
 \subsection{Weighted Composition Operators}
 \label{sec:weighted-composition-operators}
 
 In this section we study the weighted composition operator
\(W_{(u,\psi)}\) defined in
Definition~\ref{def:composition-weighted-composition}. 
 This case is obtained by the same spectral decomposition, with one
 additional feature: the first row of each spectral projection is replaced by
 the first row of \(\Phi_I(u)E_\pm\). We record the modifications needed for
 the proof.
 
 For $I\in\mathbb S$ and $w\in\mathbb C_I$, set
 \begin{equation}\label{eq:weighted-testing-quantity}
 	\mathcal B_{(u,\psi,I)}(w)
 	:=
 	\int_{\mathbb C_I}
 	\left|\bigl(u\star(k_w\bullet\psi)\bigr)_I(z)\right|^q
 	\mathrm e^{-\frac{\alpha q}{2}|z|^2}
 	\,\mathrm d m_{2,I}(z).
 \end{equation}
 Since $k_0\equiv1$,
 \begin{equation}\label{eq:weighted-testing-at-zero}
 	\mathcal B_{(u,\psi,I)}(0)
 	=
 	\int_{\mathbb C_I}|u_I(z)|^q
 	\mathrm e^{-\frac{\alpha q}{2}|z|^2}
 	\,\mathrm d m_{2,I}(z).
 	\nonumber
 \end{equation}
 Thus $\mathcal B_{(u,\psi,I)}\equiv0$ if and only if $u\equiv0$.
 
 \begin{theorem}\label{thm:weighted-p-le-q}
 	Let $u,\psi\in\mathcal{SR}(\mathbb H)$ and $0<p\le q<\infty$. Then:
 	
 	\smallskip
 	\noindent\textup{(i)} The operator $
 	W_{(u,\psi)}:F_\alpha^p(\mathbb H)\longrightarrow F_\alpha^q(\mathbb H)$ 
 	is bounded if and only if, for some, equivalently for every, $I\in\mathbb S$,
 	\[
 	\sup_{w\in\mathbb C_I}\mathcal B_{(u,\psi,I)}(w)<\infty.
 	\]
 	Moreover,
 	\[
 	\|W_{(u,\psi)}\|
 	\approx
 	\sup_{w\in\mathbb C_I}\mathcal B_{(u,\psi,I)}(w)^{1/q}.
 	\]
 	
 	\smallskip
 	\noindent\textup{(ii)} The operator $
 	W_{(u,\psi)}:F_\alpha^p(\mathbb H)\longrightarrow F_\alpha^q(\mathbb H)$  is compact if and only if,
 	for some, equivalently for every, $I\in\mathbb S$,
 	\[
 	\mathcal B_{(u,\psi,I)}(w)\longrightarrow0
 	\qquad (|w|\to\infty,\ w\in\mathbb C_I).
 	\]
 \end{theorem}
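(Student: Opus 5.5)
The plan is to run the proof of Theorem~\ref{thm:composition-p-le-q} again, with the spectral projections $E_\pm$ replaced by $\Phi_I(u)E_\pm$.

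\emph{Necessity.} Since $\|k_w\|_{F_\alpha^p(\mathbb H)}\approx1$, the definition of the Fock norm gives
\[
\mathcal B_{(u,\psi,I)}(w)\approx\|W_{(u,\psi)}k_w\|_{F_\alpha^q(\mathbb H)}^q .
\]
Hence boundedness yields $\sup_w\mathcal B_{(u,\psi,I)}(w)^{1/q}\lesssim\|W_{(u,\psi)}\|$. If $W_{(u,\psi)}$ is compact, then $k_w\to0$ locally uniformly as $|w|\to\infty$ while staying bounded, so $\mathcal B_{(u,\psi,I)}(w)\to0$.

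\emph{Fixed-slice representation.} Write $U:=\Phi_I(u)$. By Proposition~\ref{thm:matrix-model},
\[
\Phi_I(u\star(f\bullet\psi))=U\,\Phi_I(f\bullet\psi).
\]
On a nondegenerate local region this equals
\[
\sum_\sigma U E_\sigma\cdot\bigl(\text{the matrices built from }F(\lambda_\sigma),G(\lambda_\sigma),F^\#(\lambda_\sigma),G^\#(\lambda_\sigma)\bigr).
\]
So the formulas \eqref{eq:composition-local-four-term-11}--\eqref{eq:composition-local-four-term-12} persist, with $c_\sigma,d_\sigma$ replaced by the first-row entries $\tilde c_\sigma=(UE_\sigma)_{11}$ and $\tilde d_\sigma=(UE_\sigma)_{12}$, together with the second-row terms that appear because $U$ has nonzero $(1,2)$-entry. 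After relabelling, each entry is a sum of four terms of the form (coefficient)$\times$(one of $F,G,F^\#,G^\#$ evaluated at $\lambda_\sigma$).

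\emph{Cases.} If $u\equiv0$ there is nothing to prove. If $Q\equiv0$, the problem reduces, via the splitting, to scalar weighted composition operators $H\mapsto U_{ij}\,(H\circ P)$ and $H\mapsto U_{ij}\,(H\circ P^\#)$. These are handled by the complex weighted Fock--Carleson theorems of \cite{MR3248473,ueki2007weighted}, using a testing argument to control each entry of $U$ separately.

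\emph{Main obstacle: the rigidity step when $Q\not\equiv0$.} The local kernel tests give
\[
d(\lambda_+,\lambda_-)\sum_\sigma M_z(\tilde c_\sigma,\lambda_\sigma)\lesssim\mathfrak B_\infty,
\]
and similarly for $\tilde d_\sigma$. The multiplicative identity replacing \eqref{eq:HL-local-restriction} is obtained from determinants: since $\det(UE_\sigma)$ vanishes, I would instead use
\[
\det U\cdot\det\bigl(E_+\ \text{vs}\ E_-\bigr)\text{-type identities,}
\]
for instance $(UE_+)_{11}(UE_-)_{11}\rho^4=\text{entire}$. Concretely, $\tilde c_+\tilde c_-\rho^4$ and analogous products are globally entire, because they are symmetric in the branches. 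I will choose a nonvanishing combination, e.g.
\[
\widetilde{\mathcal H}:=\rho^4\sum_{i,j}(UE_+)_{ij}(UE_-)_{ij}^\#\quad\text{or}\quad \det(U)\,\mathcal H .
\]
Then I repeat the Jensen/Parseval/Liouville arguments to recover (i)--(iii) of Theorem~\ref{thm:composition-spectral-rigidity}, namely that $\Delta$ has degree at most two, $\tau$ is affine, and the branches have affine asymptotics with slope $\le1$. The weaker growth only adds the term $\log|\widetilde{\mathcal H}|$, whose circular means are bounded below by Jensen. The risk is that the chosen entire function vanishes identically. For that reason I would work with the whole matrix $UE_\pm$ through $\|UE_+\|\,\|UE_-\|$ and extract the entire function $\det(U)\,\mathcal H$, which is nonzero whenever $\det U\not\equiv0$. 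The case $\det U\equiv0$ I would treat by factoring $U$ as a rank-one product and using its nonzero row.

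\emph{Sufficiency and compactness.} Once rigidity holds, the good/singular decomposition of Lemma~\ref{lem:composition-good-singular-decomposition} applies verbatim. On $U_0\cup U_1$, the argument of Proposition~\ref{prop:composition-good-p-le-q}, with $\tilde c,\tilde d$ in place of $c,d$, shows that the pulled-back measures are (vanishing) $(p,q)$-Fock--Carleson measures with constants $\lesssim\mathfrak B_\infty^q$. The constant-branch case is handled by the same invertible $2\times2$ kernel system. On $E$, the estimate \eqref{eq:composition-compact-set-estimate}, multiplied by $\sup_E\|U\|$, gives the small-measure bound of Proposition~\ref{prop:composition-singular-estimates}(i) and the compact-open continuity needed for compactness. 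Summing the pieces gives
\[
\|W_{(u,\psi)}\|\approx\sup_w\mathcal B_{(u,\psi,I)}(w)^{1/q},
\]
and compactness follows via bounded sequences converging locally uniformly. Independence of the slice follows because the conclusion on one slice is an operator property.
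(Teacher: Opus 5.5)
Your overall architecture matches the paper's: kernel test, the scalar reduction for $Q\equiv0$, rigidity from a nonvanishing entire product of branch coefficients plus Jensen, the good/singular decomposition, Carleson estimates on $U_0\cup U_1$, and small area on $E$. However, the step you yourself identify as the main obstacle does not work as proposed.

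With $U=\Phi_I(u)$ in your notation, your kernel estimates control only the first rows of $UE_\sigma$, and neither proposed function is controlled by them.
\begin{itemize}
\item The function $\rho^4\sum_{i,j}(UE_+)_{ij}(UE_-)^\#_{ij}$ is not invariant under $\rho\mapsto-\rho$. Hence it is not a well-defined entire function.
\item For $\det(U)\,\mathcal H$ there is no bound $|\det U\,\mathcal H|\lesssim|\rho|^4\|UE_+\|\,\|UE_-\|$. From $\det(U)E_\pm=\operatorname{adj}(U)\,(UE_\pm)$ one only gets $|(\det U)^2\mathcal H|\lesssim\|U\|^2|\rho|^4\|UE_+\|\,\|UE_-\|$. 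Pointwise, $|\det U\,c_+c_-|$ can exceed $\|UE_+\|\,\|UE_-\|$ by a factor comparable to $\|E_+\|$, which is not a priori controlled (compare $c_\pm=(1\pm\cos z^m)/2$ in Example~\ref{ex:affine-branches-not-sufficient}).
\end{itemize}
So the claim that ``the weaker growth only adds $\log|\widetilde{\mathcal H}|$'' is false for this choice. The honest extra factor $\|U\|^2\lesssim e^{\alpha|z|^2}$ (from $\mathcal B_{(u,\psi,I)}(0)<\infty$) still lets Jensen give $\deg\Delta\le2$ and $\tau$ affine. But it destroys the Liouville argument for $|a_\pm|\le1$ that you claim to recover. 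Separately, your case $\det U\equiv0$ is vacuous: $\det\Phi_I(u)=AA^\#+DD^\#$ equals $|A|^2+|D|^2$ on $\mathbb R$, so it vanishes identically only if $u\equiv0$.

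The missing idea is how to handle the situation where both first-row products vanish identically, namely $\mathcal H_u=c^u_+c^u_-\rho^4$ and $\mathcal L_u=d^u_+d^u_-\rho^4$. Then the first row of $\Phi_I(u)$ annihilates one spectral projection, so the first-row kernel tests see only one branch.

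The paper resolves this with a case split:
\begin{itemize}
\item If $\mathcal H_u\not\equiv0$ or $\mathcal L_u\not\equiv0$, the Jensen/Liouville argument runs verbatim with that function.
\item Otherwise, Proposition~\ref{prop:weighted-mixed-degeneracy} shows (when $Q\not\equiv0$) that the vanishing occurs for the same branch. Proposition~\ref{prop:weighted-same-branch-reduction} then patches the other branch into an entire $\lambda$ and reduces $W_{(u,\psi)}$ to the single scalar weighted composition \eqref{eq:weighted-one-branch-formula}.
\end{itemize}

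Your uniform approach can also be repaired in three steps.
\begin{enumerate}
\item The second-row entries of $\Phi_I(W_{(u,\psi)}k_w)$ at $z$ are reflections of first-row entries at $\overline z$, and the weight is reflection invariant. So the submean and two-kernel estimates hold for all four entries of $UE_\sigma$.
\item Use a symmetrized product $\rho^4\bigl[(UE_+)_{ij}(UE_-)_{kl}+(UE_-)_{ij}(UE_+)_{kl}\bigr]$. It is symmetric in the branches and locally bounded near $Z(\Delta)$, hence entire, and it is bounded by $2|\rho|^4\|UE_+\|\,\|UE_-\|$.
\item For some choice of indices it is not identically zero. If all of them vanished, take $(ij)=(kl)$ with $(UE_+)_{ij}\not\equiv0$; this forces $(UE_-)_{ij}\equiv0$ and then $UE_-\equiv0$, which is impossible since $\det U\not\equiv0$.
\end{enumerate}
With such a function, the rigidity proof from the composition case goes through unchanged, including $|a_\pm|\le1$, and the same-branch case split becomes unnecessary.
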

 
 \begin{theorem}\label{thm:weighted-q-less-p}
 	Let $u,\psi\in\mathcal{SR}(\mathbb H)$ and $0<q<p<\infty$. Put
 	$s=p/(p-q)$. Then the following are equivalent:
 	\begin{enumerate}[(i)]
 		\item $W_{(u,\psi)}:F_\alpha^p(\mathbb H)\to F_\alpha^q(\mathbb H)$ is bounded;
 		\item $W_{(u,\psi)}:F_\alpha^p(\mathbb H)\to F_\alpha^q(\mathbb H)$ is compact;
 		\item for some, equivalently for every, $I\in\mathbb S$,
 		\[
 		\mathcal B_{(u,\psi,I)}\in
 		L^s(\mathbb C_I,\mathrm d m_{2,I}).
 		\]
 	\end{enumerate}
 	Moreover,
 	\[
 	\|W_{(u,\psi)}\|
 	\approx
 	\left(
 	\int_{\mathbb C_I}
 	\mathcal B_{(u,\psi,I)}(w)^s
 	\,\mathrm d m_{2,I}(w)
 	\right)^{1/(qs)}.
 	\]
 \end{theorem}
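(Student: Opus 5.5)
The plan is to repeat the proof of Theorem~\ref{thm:composition-q-less-p} with the weight inserted through the matrix model. By Proposition~\ref{thm:matrix-model},
\[
\Phi_I\bigl(u\star(f\bullet\psi)\bigr)=\Phi_I(u)\,\Phi_I(f\bullet\psi).
\]
On a nondegenerate local region, the four-term formulas \eqref{eq:composition-local-four-term-11}--\eqref{eq:composition-local-four-term-12} therefore keep their form, but the coefficients $c_\sigma,d_\sigma$ are replaced by the entries of the first row of $\Phi_I(u)E_\sigma$. Write $\tilde c_\sigma,\tilde d_\sigma$ for these entries (more precisely, for the suitable combinations entering the $(1,1)$ and $(1,2)$ entries).

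The argument begins with compact $\Rightarrow$ bounded, which is trivial. For bounded $\Rightarrow$ (iii), the kernel test gives $\sup_w\mathcal B_{(u,\psi,I)}(w)\lesssim\|W_{(u,\psi)}\|^q<\infty$. For (iii) $\Rightarrow$ compact, the embedding $F^{qs}_\alpha\hookrightarrow F^\infty_\alpha$ shows that $\mathcal B_{(u,\psi,I)}\in L^s$ gives the same pointwise separated estimates. The case $u\equiv0$ is vacuous, and the case $Q\equiv0$ reduces, as in Subsection~\ref{subsec:composition-scalar-reductions}, to scalar weighted composition operators $W_{U,P}$ and $W_{V,P^\#}$-type pieces. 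These are handled by the complex $(p,q)$ Fock--Carleson theorem \cite[Theorem~2.7]{MR3248473} applied to the pull-back measures with densities $|U|^q$ and $|V|^q$.

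When $Q\not\equiv0$, the next step is the analogue of the rigidity in Theorem~\ref{thm:composition-spectral-rigidity}, with $\mathcal H$ replaced by
\[
\mathcal H_u:=\det\Phi_I(u)\cdot\mathcal H=(|U|^2\text{-type product})\cdot(-\Delta QQ^\#).
\]
The key identity is
\[
\tilde c_+\tilde c_-\rho^4=\det(\Phi_I(u))\,c_+c_-\rho^4\ \text{(up to the off-diagonal combination)}.
\]
I would instead work with the nonzero entire function obtained by multiplying the two separated estimates for the $(1,1)$ entries of $\Phi_I(u)E_\pm$, after verifying that it is entire, symmetric in the branch, and not identically zero. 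Jensen's formula then gives $\deg\Delta\le2$ and $\tau$ affine, exactly as before. This step is the main obstacle. The product $\tilde c_+\tilde c_-$ involves $u$, and one must check that it is a global entire function that does not vanish identically. I would try $\tilde c_+\tilde c_-\rho^4=U^2\,c_+c_-\rho^4+(\text{cross terms})$, and fall back on using whichever of the entry products is nonzero, choosing among the $(1,1)$ and $(1,2)$ entries and using that $\Phi_I(u)$ is invertible off a discrete set.

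With rigidity available, the good/singular decomposition of Lemma~\ref{lem:composition-good-singular-decomposition} applies. On the good regions, the argument of Proposition~\ref{prop:composition-good-q-less-p} proceeds verbatim with $\tilde c_\pm,\tilde d_\pm$ in place of $c_\pm,d_\pm$: preimage-disk comparison, the lower bound for the two-term difference, Khinchine's inequality with lattice duality in $\ell^s$, and \cite[Lemma~2.3]{MR4811250}. This shows the measures are vanishing Carleson measures with $L^s$ control in both directions. On the singular set $E$, Proposition~\ref{prop:composition-singular-estimates}(ii) carries over because $\Phi_I(u)$ is bounded on $E$, so $\mathcal B^{(u)}_E(w)\lesssim\mathcal B^{(u)}_E(0)e^{-c|w|^2}$. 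For the lower bound, $|(u\star(k_w\bullet\psi))_I|\ge\tfrac12|u_I|$ near $w=0$ on $E$ by uniform convergence. Summing the pieces yields the norm equivalence, and slice independence follows as before.
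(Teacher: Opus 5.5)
Your outline follows the paper's structure: the scalar case $Q\equiv0$, a Jensen-type rigidity step, the good/singular decomposition, Khinchine and lattice duality on the good regions, and Gaussian decay on $E$. However, the rigidity step has a genuine gap, which you flag but do not close.

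You need a global entire function, built from $\tilde c_\pm=(\Phi_I(u)E_\pm)_{11}$ and $\tilde d_\pm=(\Phi_I(u)E_\pm)_{12}$, that is not identically zero. Your fallback is to take whichever of $\tilde c_+\tilde c_-\rho^4$, $\tilde d_+\tilde d_-\rho^4$ is nonzero, appealing to the invertibility of $\Phi_I(u)$ off a discrete set. That fails. Both products vanish identically whenever the first row $(A,D)$ of $\Phi_I(u)$ is a left eigenvector of $\Psi$, and this is compatible with $\Phi_I(u)$ being everywhere invertible: its two rows can be left eigenvectors for the two different eigenvalues.

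Concretely, take $\psi_I=\frac12 z+\kappa J$ with $\kappa>0$, and $u_I=1+IJ$, so $A=1$, $D=I$. Then $\tau=z$, $\Delta=-4\kappa^2$, $\rho=2I\kappa$, $\lambda_\pm=\frac12 z\pm I\kappa$, and $(1,I)(\Psi-\lambda_-\mathrm I_2)=(0,0)$. Hence $\tilde c_+\equiv\tilde d_+\equiv0$, while $Q\not\equiv0$ and $\det\Phi_I(u)\equiv2$.

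In this example $W_{(u,\psi)}f$ involves only $F,G,F^\#,G^\#$ composed with $\lambda_-$, and the operator is compact, so the theorem has content here. Yet the Jensen argument has nothing to act on, and the separated two-branch estimates carry no growth information on $\lambda_+$, $\tau$ or $\Delta$. Your later appeal to the good/singular decomposition, which needs $\Delta$ to be a polynomial of degree at most two, is therefore unsupported in this regime.

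Separately, the identity $\tilde c_+\tilde c_-\rho^4=\det\Phi_I(u)\,c_+c_-\rho^4$ is false. For $D\equiv0$ the left side is $A^2c_+c_-\rho^4$, not $AA^\#c_+c_-\rho^4$. The correct branch-even entire functions are the paper's $\mathcal H_u$ and $\mathcal L_u$.

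The missing idea is the paper's separate treatment of the case $\mathcal H_u\equiv\mathcal L_u\equiv0$. In that case, on each local region one coefficient in each pair vanishes.
\begin{enumerate}
\item The mixed pattern ($\tilde c_+\equiv0$ and $\tilde d_-\equiv0$, or the reverse) forces $Q\equiv0$ (Proposition~\ref{prop:weighted-mixed-degeneracy}), returning you to the scalar case.
\item In the same-sign pattern, the identities $AP-DQ^\#=\lambda_-A$ and $AQ+DP^\#=\lambda_-D$ let the surviving local branches patch into an entire function $\lambda$, with removable singularities at the zeros of $\Delta$ (Proposition~\ref{prop:weighted-same-branch-reduction}).
\end{enumerate}
In the same-sign case $W_{(u,\psi)}$ becomes a single scalar weighted composition with symbol $\lambda$, and $\mathcal B_{(u,\psi,I)}\approx\widetilde\nu_{(u,\lambda,I)}$. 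The complex $q<p$ Fock--Carleson theorem then gives the three equivalences and the norm estimate directly, without rigidity of $\tau$ or $\Delta$ and without the good/singular decomposition. Only once this regime is removed does your two-branch argument go through, with $\mathcal H_u$ or $\mathcal L_u$ as the nonzero entire function in the Jensen step.
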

 
\paragraph{The weighted local two-branch representation.}
 \label{subsec:weighted-local-change}
 
 Fix $I,J\in\mathbb S$ with $J\perp I$. Retain all notation associated with
 $\psi$ from Section~\ref{sec:composition-operators};  
Write
\[
u_I=A+DJ,
\qquad
\Phi_I(u)
=
\begin{pmatrix}
	A&D\\[2pt]
	-D^\#&A^\#
\end{pmatrix}.
\]
 On a nondegenerate local region $U$, choose $\rho^2=\Delta$, set
 $\lambda_\pm=(\tau\pm\rho)/2$, and let $E_\pm$ be the spectral projections
 from \eqref{eq:composition-spectral-projections}. Define
 \begin{equation}\label{eq:weighted-branch-coefficients}
 	c_\pm^{u}:=(\Phi_I(u)E_\pm)_{11},
 	\qquad
 	d_\pm^{u}:=(\Phi_I(u)E_\pm)_{12}.
 \end{equation} 
 Explicitly,
 \begin{equation}\label{eq:weighted-explicit-branch-coefficients}
 	\begin{aligned}
 		c_+^{u}
 		&=\frac{A(P-\lambda_-)-DQ^\#}{\rho},
 		&\qquad
 		c_-^{u}
 		&=-\frac{A(P-\lambda_+)-DQ^\#}{\rho},\\[2pt]
 		d_+^{u}
 		&=\frac{AQ+D(P^\#-\lambda_-)}{\rho},
 		&
 		d_-^{u}
 		&=-\frac{AQ+D(P^\#-\lambda_+)}{\rho}.
 	\end{aligned}
 	\nonumber
 \end{equation}
 Since $E_++E_-=\mathrm I_2$,
 \begin{equation}\label{eq:weighted-coefficient-sums}
 	c_+^{u}+c_-^{u}=A,
 	\qquad
 	d_+^{u}+d_-^{u}=D. 
 \end{equation}
 
 \begin{proposition}\label{prop:weighted-local-formula}
 	Let $f_I=F+GJ$. On every nondegenerate local region,
 	\begin{equation}\label{eq:weighted-local-formula-11}
 		\bigl(\Phi_I(W_{(u,\psi)}f)\bigr)_{11}
 		=
 		\sum_{\sigma=\pm}
 		\left[
 		c_\sigma^{u}F(\lambda_\sigma)
 		-d_\sigma^{u}G^\#(\lambda_\sigma)
 		\right], 
 	\end{equation}
 	and
 	\begin{equation}\label{eq:weighted-local-formula-12}
 		\bigl(\Phi_I(W_{(u,\psi)}f)\bigr)_{12}
 		=
 		\sum_{\sigma=\pm}
 		\left[
 		d_\sigma^{u}F^\#(\lambda_\sigma)
 		+c_\sigma^{u}G(\lambda_\sigma)
 		\right]. 
 	\end{equation}
 \end{proposition}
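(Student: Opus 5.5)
The plan is to reduce everything to the matrix realization, exactly as for \eqref{eq:composition-local-four-term-11}--\eqref{eq:composition-local-four-term-12}. By Proposition~\ref{thm:matrix-model}, $\Phi_I$ is multiplicative, so
\[
\Phi_I(W_{(u,\psi)}f)=\Phi_I(u)\,\Phi_I(f\bullet\psi).
\]
Thus it suffices to write $\Phi_I(f\bullet\psi)$ on a nondegenerate local region $U$ as a combination of the spectral projections $E_\pm$ with scalar holomorphic coefficients. After that, we multiply on the left by $\Phi_I(u)$ and read off the first row.

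The first step is to identify the full matrix $\Phi_I(f\bullet\psi)$, not only its first row. I would use the power series computation preceding \eqref{eq:Xf-Yf-global} together with the structure of $\mathcal M_I$. Since $\Phi_I(\psi^{\star n})=\Psi^n$ and $\Phi_I$ is real linear, I expect
\[
\Phi_I(f\bullet\psi)=F(\Psi)\,\mathrm{I}_2 \text{-part}+\dots
\]
To make this precise, I would check that for $c\in\mathbb C_I$ one has $\Phi_I(\text{const } c)=\operatorname{diag}(c,\overline c)$, and $\Phi_I(J)=\begin{pmatrix}0&1\\-1&0\end{pmatrix}=:\mathcal J$. Then $f\bullet\psi=\sum_n\psi^{\star n}(\alpha_n+\beta_nJ)$ gives
\[
\Phi_I(f\bullet\psi)=\sum_n\Psi^n\bigl(\operatorname{diag}(\alpha_n,\overline{\alpha_n})+\operatorname{diag}(\beta_n,\overline{\beta_n})\mathcal J\bigr).
\]
This step uses that constants $\star$-multiply on the right as ordinary right multiplication. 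Hence the first column of $\Phi_I(f\bullet\psi)$ equals $F(\Psi)e_1-G^\#(\Psi)e_2$, and the second column equals $F^\#(\Psi)e_2+G(\Psi)e_1$. Here $e_1,e_2$ are the standard basis vectors, and $H(\Psi)e_j$ means the $j$-th column of $H(\Psi)$. This is consistent with \eqref{eq:Xf-Yf-global}.

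The second step inserts the functional calculus $H(\Psi)=H(\lambda_+)E_++H(\lambda_-)E_-$ on $U$ into these columns. Left multiplication by $\Phi_I(u)$ acts columnwise, so it replaces each $E_\sigma$ by $\Phi_I(u)E_\sigma$. Taking the $(1,1)$ entry then gives
\[
\sum_\sigma\bigl[F(\lambda_\sigma)(\Phi_I(u)E_\sigma)_{11}-G^\#(\lambda_\sigma)(\Phi_I(u)E_\sigma)_{12}\bigr].
\]
This is \eqref{eq:weighted-local-formula-11} by the definition \eqref{eq:weighted-branch-coefficients}. The $(1,2)$ entry gives
\[
\sum_\sigma\bigl[F^\#(\lambda_\sigma)(\Phi_I(u)E_\sigma)_{12}+G(\lambda_\sigma)(\Phi_I(u)E_\sigma)_{11}\bigr],
\]
which is \eqref{eq:weighted-local-formula-12}.

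The only delicate point is bookkeeping in the first step. One must verify that the scalars $F(\lambda_\sigma)$ and $G^\#(\lambda_\sigma)$ commute past the matrix $\Phi_I(u)$. They do, because all entries lie in the commutative field $\mathbb C_I$ and the scalars multiply whole columns. One must also confirm that $G^\#(\Psi)$ contributes through its $(1,2)$ entry, that is, through the entry $(E_\sigma)_{12}$ in the first row. An alternative route avoids the full-matrix identification: apply the unweighted formulas \eqref{eq:composition-local-four-term-11}--\eqref{eq:composition-local-four-term-12} to both rows, using $\Phi_I(f\bullet\psi)\in\mathcal M_I$ so that the second row is the $\#$-reflection of the first. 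This, however, requires care with $(\lambda_\sigma)^\#$ and with the branch swap on $U=U^\#$. I would therefore go with the column argument above.
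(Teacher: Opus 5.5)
Your proposal is correct and follows the paper's proof. As in the paper, you use multiplicativity $\Phi_I(W_{(u,\psi)}f)=\Phi_I(u)\Phi_I(f\bullet\psi)$, insert $H(\Psi)=H(\lambda_+)E_++H(\lambda_-)E_-$, multiply on the left by $\Phi_I(u)$, and read off the first row. Your explicit identification $\Phi_I(f\bullet\psi)=\sum_n\Psi^n\Phi_I(a_n)$ also supplies the second row of $\Phi_I(f\bullet\psi)$. That row is needed for the left multiplication but is not recorded in \eqref{eq:Xf-Yf-global}, so it fills a step the paper leaves implicit.
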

 
 \begin{proof}
Use the functional-calculus identity
$H(\Psi)=H(\lambda_+)E_++H(\lambda_-)E_-$ in
\eqref{eq:Xf-Yf-global}. Since
\[
\Phi_I(W_{(u,\psi)}f)=\Phi_I(u)\Phi_I(f\bullet\psi),
\]
we multiply from the left by $\Phi_I(u)$ and take the first-row entries.
 \end{proof}

\paragraph{Scalar and one-branch reductions.}
 \label{subsec:weighted-degeneracies}
 
If $Q\equiv0$, then $\psi$ preserves the fixed complex slice \(\mathbb C_I\) and
 \begin{equation}\label{eq:weighted-scalar-formula}
 	\begin{aligned}
 		\bigl(W_{(u,\psi)}f\bigr)_I
 		={}&
 		\left[A(F\circ P)-D(G\circ P)^\#\right] 
 	 +
 		\left[A(G\circ P)+D(F\circ P)^\#\right]J.
 	\end{aligned}
 	\nonumber
 \end{equation}
 Define
 \begin{equation}\label{eq:weighted-scalar-pullback}
 	\mu_{(u,P,I)}(\mathcal A)
 	:=
 	\int_{P^{-1}(\mathcal A)}
 	\bigl(|A(z)|^q+|D^\#(z)|^q\bigr)
 	\mathrm e^{-\frac{\alpha q}{2}|z|^2}
 	\,\mathrm d m_{2,I}(z),
 	\nonumber
 \end{equation}
 and
 \[
 \mathrm d\nu_{(u,P,I)}(\zeta)
 :=
 \mathrm e^{\frac{\alpha q}{2}|\zeta|^2}
 \,\mathrm d\mu_{(u,P,I)}(\zeta).
 \]
 Exactly as in the scalar reduction in
 Section~\ref{subsec:composition-scalar-reductions},
 \begin{equation}\label{eq:weighted-scalar-Berezin}
 	\mathcal B_{(u,\psi,I)}(w)
 	\approx
 	\widetilde\nu_{(u,P,I)}(w), 
 \end{equation}
 and the norm of $W_{(u,\psi)}f$ is controlled by the corresponding scalar
 Fock--Carleson embedding. Thus Theorems~\ref{thm:weighted-p-le-q} and
 \ref{thm:weighted-q-less-p} follow in this case from the classical complex
 theorems. We therefore assume below that $Q\not\equiv0$.
 
 We define the global entire functions
 \begin{equation}\label{eq:weighted-HL}
 	\begin{aligned}
 		\mathcal H_u
 		&:=-\left(A\left(P-\frac\tau2\right)-DQ^\#\right)^2\Delta
 		+\frac{A^2}{4}\Delta^2,\\[2pt]
 		\mathcal L_u
 		&:=-\left(AQ+D\left(P^\#-\frac\tau2\right)\right)^2\Delta
 		+\frac{D^2}{4}\Delta^2.
 	\end{aligned}
 	\nonumber
 \end{equation}
 A direct calculation gives, on a nondegenerate local
 region, 
 \begin{equation}\label{eq:weighted-HL-local}
 	\mathcal H_u=c_+^{u}c_-^{u}\rho^4,
 	\qquad
 	\mathcal L_u=d_+^{u}d_-^{u}\rho^4. 
 \end{equation}
 
 In particular, every two-branch formula and every local kernel estimate from
 Section~\ref{sec:composition-operators} remains valid after the simultaneous
 substitutions
 \begin{equation}\label{eq:weighted-global-substitution}
 	c_\pm\mapsto c_\pm^{u},
 	\qquad
 	d_\pm\mapsto d_\pm^{u},
 	\qquad
 	\mathcal B_{\psi,I}\mapsto\mathcal B_{(u,\psi,I)}.
 \end{equation}
 The only genuinely new issue is that some of the four weighted coefficients may
 vanish identically.
 \begin{proposition}\label{prop:weighted-mixed-degeneracy}
 	Assume $u\not\equiv0$ and $\Delta\not\equiv0$. If, on a nondegenerate local
 	region, either
 	\[
 	c_+^{u}\equiv0,\qquad d_-^{u}\equiv0, \qquad \text{or} \qquad
 	c_-^{u}\equiv0,\qquad d_+^{u}\equiv0,
 	\]
then $Q\equiv0$. Hence the symbol preserves the fixed complex slice
\(\mathbb C_I\).
 \end{proposition}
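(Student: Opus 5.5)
The plan is to reduce both alternatives to one by symmetry, and then argue with the rank-one structure of the spectral projection \(E_+\), using the row vector \(r:=(A,\ D)\), the first row of \(\Phi_I(u)\). The two alternatives are exchanged by replacing the branch \(\rho\) with \(-\rho\). This swaps \(\lambda_+\leftrightarrow\lambda_-\) and \(E_+\leftrightarrow E_-\), hence \(c^u_\pm\leftrightarrow c^u_\mp\) and \(d^u_\pm\leftrightarrow d^u_\mp\). So it suffices to treat \(c_+^{u}\equiv0\), \(d_-^{u}\equiv0\) on a nondegenerate local region \(U\).

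By definition \eqref{eq:weighted-branch-coefficients}, \(rE_+=(c_+^u,d_+^u)\) and \(rE_-=(c_-^u,d_-^u)\). By \eqref{eq:weighted-coefficient-sums}, the hypotheses force
\[
rE_+=(0,\ D),\qquad rE_-=(A,\ 0)\qquad\text{on }U .
\]
In particular \(c_+^u=A\,c_+-D\,(E_+)_{21}\equiv0\) and \(d_+^u=D\). Using \((E_+)_{21}=-Q^\#/\rho\), \(c_+=(P-\lambda_-)/\rho\) and \(d_+=Q/\rho\), the explicit formulas in \eqref{eq:weighted-explicit-branch-coefficients} give two scalar identities on \(U\):
\[
A(P-\lambda_-)-DQ^\#=0,\qquad AQ+D(P^\#-\lambda_-)=D\rho .
\]
Since \(P^\#-\lambda_-=\rho-(P-\lambda_-)\) (because \(\tau=P+P^\#\) and \(\lambda_+-\lambda_-=\rho\)), the second identity simplifies to
\[
AQ=D(P-\lambda_-).
\]

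Next, I would eliminate. On the open set where \(A\neq0\), the first identity gives \(P-\lambda_-=DQ^\#/A\). Substituting into the second yields \(A^2Q=D^2QQ^\#\), that is,
\[
Q\,(A^2-D^2Q^\#)=0 .
\]
Rather than pursue this, I would instead use the rank-one structure, which seems cleaner. The matrix \(E_+\) is a rank-one idempotent, so \(E_+=v\,\omega\) with \(\omega v=1\), where \(v\) is a column and \(\omega\) a row. Then \(rE_+=(rv)\,\omega\), and there are two cases.

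\emph{Case (a): \(rv\not\equiv0\).} Then \(\omega\) is proportional to \((0,1)\), so the first column of \(E_+\) vanishes. Its \((2,1)\)-entry is \(-Q^\#/\rho\), hence \(Q^\#\equiv0\) on \(U\).

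\emph{Case (b): \(rv\equiv0\).} Then \(rE_+\equiv0\), so \(D\equiv0\) on \(U\). Since \(u\not\equiv0\), \(A\not\equiv0\), and \(0=rE_+=A\,(c_+,d_+)\) gives \(d_+=Q/\rho\equiv0\) on \(U\).

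In either case \(Q\) vanishes on the open set \(U\), so the identity theorem gives \(Q\equiv0\) on \(\mathbb C_I\). Hence \(\psi\) preserves \(\mathbb C_I\).

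The main obstacle is the pointwise, versus identically-holomorphic, dichotomy in the rank-one factorization. The factors \(v\) and \(\omega\) need only be meromorphic on \(U\), and "\(rv\not\equiv0\)" only gives \(rv\neq0\) off a discrete set. I would handle this by working on the dense open subset of \(U\) where \(rv\neq0\), or where \(A\neq0\), deducing the vanishing of \(Q^\#\) or \(Q\) there, and then extending to all of \(U\) by continuity and to \(\mathbb C_I\) by the identity theorem. The hypothesis \(\Delta\not\equiv0\) guarantees that \(U\) is a genuine nonempty open set on which \(\rho\neq0\), so the divisions by \(\rho\) are legitimate.
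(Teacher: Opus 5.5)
Your proof is correct, but it finishes differently from the paper. Both arguments start from the same two identities, $At=DQ^\#$ and $AQ=Dt$ with $t=P-\lambda_-$.

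\emph{The paper's route.} It splits on whether $A\equiv0$. If $A\equiv0$, then $DQ^\#\equiv0$ with $D\not\equiv0$. Otherwise, multiplying the two identities by $t$ and by $Q^\#$ gives $t^2=QQ^\#$. The explicit discriminant relation $4QQ^\#=(P-P^\#)^2-\rho^2$ then turns this into $\rho t=0$, so $t=0$ and $QQ^\#=0$.

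\emph{Your route.} You split instead on whether $D\equiv0$ on $U$; this is exactly what $rv\equiv0$ means, since $rE_+=(rv)\,\omega=(0,D)$ and $\omega\neq0$. You then replace the discriminant computation by the projection structure of $E_+$. The Cayley--Hamilton information enters only through $E_+$ being a rank-one idempotent. That structure forces either the first column of $E_+$ (which contains $-Q^\#/\rho$) or its first row (which contains $d_+=Q/\rho$) to vanish. Your argument is more structural and never touches the explicit formula for $\Delta$. The paper's is a short direct computation that also records $t=0$ as a byproduct; your first-column and first-row conclusions give $c_+=0$, i.e.\ the same fact.

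\emph{A simplification.} The factorization $E_+=v\omega$, and the meromorphy issue you flag as the main obstacle, are unnecessary. Idempotency alone gives
\[
(0,D)=rE_+=rE_+^2=(0,D)E_+,
\]
hence $D\,(E_+)_{21}\equiv0$. The case $D\equiv0$ needs only $rE_+=A\,(c_+,d_+)=0$ together with $A\not\equiv0$. Likewise, the two scalar identities you derive are not actually used in the rank-one step.

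\emph{Two slips, neither used in your argument.}
\begin{itemize}
\item The first entry of $rE_+$ is $Ac_++D(E_+)_{21}$, not $Ac_+-D(E_+)_{21}$. Your resulting identity $A(P-\lambda_-)=DQ^\#$ is nevertheless correct.
\item In the elimination you abandon, the substitution gives $A^2Q=D^2Q^\#$, not $A^2Q=D^2QQ^\#$.
\end{itemize}
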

 
 \begin{proof}
 	We treat the first alternative. Put
 	\[
 	t:=P-\lambda_-=\lambda_+-P^\#.
 	\]
 	The two vanishing identities are equivalent to
 	\[
 	At=DQ^\#,
 	\qquad
 	AQ=Dt.
 	\]
 	If $A\equiv0$, then $D\not\equiv0$ and the first identity gives
 	$Q^\#\equiv0$. Assume $A\not\equiv0$. Multiplying the first identity by $t$
 	and the second by $Q^\#$ gives $t^2=QQ^\#$. Since
 	\[
 	t=\frac{P-P^\#+\rho}{2},
 	\qquad
 	QQ^\#=\frac{(P-P^\#)^2-\rho^2}{4},
 	\]
 	we obtain $\rho(P-P^\#+\rho)=0$. Thus $t=0$ and $QQ^\#=0$ on the local
 	region. The identity theorem implies $Q\equiv0$ on
 	$\mathbb C_I$.
 \end{proof}
 \begin{proposition}
 	\label{prop:weighted-same-branch-reduction}
 	Assume \(u\not\equiv0\) and \(\Delta\not\equiv0\). Suppose that on every
 	nondegenerate local region \(U\), after possibly replacing the chosen square
 	root \(\rho\) of \(\Delta\) by \(-\rho\), the weighted branch coefficients satisfy
 	\[
 	c_+^{u}\equiv0,
 	\qquad
 	d_+^{u}\equiv0
 	\qquad\text{on }U.
 	\]
 	Equivalently, the first row of \(\Phi_I(u)\) annihilates the spectral projection
 	\(E_+\) associated with one local eigenvalue branch:
 	\[
 	(\Phi_I(u)E_+)_{1\bullet}\equiv(0,0).
 	\]
 	Then the complementary local eigenvalue branches \(\lambda_-\) patch together
 	to an entire function \(\lambda\in\mathcal O(\mathbb C_I)\), and, for every
 	\(f\in\mathcal{SR}(\mathbb H)\) with \(f_I=F+GJ\),
 	\begin{equation}\label{eq:weighted-one-branch-formula}
 		\begin{aligned}
 			\bigl(\Phi_I(W_{(u,\psi)}f)\bigr)_{11}
 			&=AF(\lambda)-DG^\#(\lambda),\\
 			\bigl(\Phi_I(W_{(u,\psi)}f)\bigr)_{12}
 			&=DF^\#(\lambda)+AG(\lambda).
 		\end{aligned}
 	\end{equation}
 \end{proposition}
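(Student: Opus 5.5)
The plan is to read the one-branch formula directly off Proposition~\ref{prop:weighted-local-formula}, and then to patch the surviving branch globally. On a nondegenerate local region \(U\), orient \(\rho\) so that \(c_+^{u}\equiv d_+^{u}\equiv0\). Then \eqref{eq:weighted-coefficient-sums} gives \(c_-^{u}=A\) and \(d_-^{u}=D\) on \(U\). Substituting into \eqref{eq:weighted-local-formula-11}--\eqref{eq:weighted-local-formula-12}, the \(\sigma=+\) terms drop out, and we obtain \eqref{eq:weighted-one-branch-formula} on \(U\) with \(\lambda=\lambda_-\). So the formula is immediate locally. The real content is that the selected branches \(\lambda_-\), coming from different regions and possibly from different sign choices of \(\rho\), are restrictions of a single entire function.

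For the patching, I would first characterize the surviving branch intrinsically. The hypothesis says the first row \(r:=(A,D)\) of \(\Phi_I(u)\) satisfies \(rE_+=0\), that is, \(r(\Psi-\lambda_-\mathrm I_2)=0\). Equivalently, \(r\Psi=\lambda_- r\), so \(r\) is a left eigenvector of \(\Psi\) with eigenvalue \(\lambda_-\). Since \(u\not\equiv0\), the identity theorem shows that \(A\) and \(D\) are not both identically zero, so \(r\ne0\) off a discrete set \(Z\).

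At any \(z\notin Z\) with \(\Delta(z)\ne0\), the two eigenvalues are distinct. The value \(\lambda_-(z)\) is then the unique eigenvalue whose left eigenspace contains \(r(z)\). In particular it is independent of the region \(U\) and of the sign convention used for \(\rho\). For instance, wherever \(A\ne0\), one has explicitly \(\lambda(z)=(r\Psi)_1/A=P-DQ^\#/A\). The local branches therefore agree on overlaps and define a single holomorphic function \(\lambda\) on \(\mathcal N\setminus Z\).

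It remains to extend \(\lambda\) across the discrete set \(Z\cup Z(\Delta)\). Since \(\lambda\) is a root of \(x^2-\tau x+\delta\), it satisfies \(|\lambda|\le|\tau|+|\delta|^{1/2}+1\), so it is locally bounded everywhere. Riemann's removable singularity theorem then gives \(\lambda\in\mathcal O(\mathbb C_I)\). Both sides of \eqref{eq:weighted-one-branch-formula} are now entire, and they agree on the nonempty open set \(\mathcal N\). The identity theorem therefore gives the formula on all of \(\mathbb C_I\). I expect the only delicate point to be this gluing step, namely confirming that the sign ambiguity of \(\rho\) is resolved consistently. The left-eigenvector characterization handles this, because \(\lambda_+\ne\lambda_-\) on \(\mathcal N\) and \(r\) is nonzero there generically.
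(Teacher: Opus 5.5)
Your proof is correct. Its skeleton matches the paper's: the hypothesis becomes the left-eigenvector relation $r\Psi=\lambda_- r$ for the first row $r=(A,D)$ of $\Phi_I(u)$. That relation determines $\lambda_-$ independently of the region and of the sign of $\rho$ wherever $r\neq0$, and the only substantive step left is extension across the zeros of $\Delta$.

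At that step you argue differently from the paper.

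\begin{itemize}
\item \textbf{Removability at zeros of $\Delta$.} The paper cancels the largest common power of $z-z_0$ from $A$ and $D$ at each zero $z_0$ of $\Delta$. It then extends $\lambda$ through the explicit quotients $(AP-DQ^\#)/A$ or $(AQ+DP^\#)/D$, whichever denominator is nonzero at $z_0$. You instead note that any root of $x^2-\tau x+\delta$ satisfies $|\lambda|\le|\tau|+|\delta|^{1/2}$, so $\lambda$ is locally bounded, and you apply Riemann's removable singularity theorem. Your route is shorter and avoids the common-factor bookkeeping. The paper's route has the advantage of giving an explicit local expression for $\lambda$ near each zero of $\Delta$.
\item \textbf{Agreement on overlaps.} Your observation that a nonzero row vector has at most one left eigenvalue makes agreement immediate at every point off the discrete set $Z$. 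This replaces the paper's "agree on an open subset, then continue analytically" step.
\item \textbf{The final formula.} The paper obtains it globally: once $r\Psi=\lambda r$ holds everywhere, it follows that $rH(\Psi)=H(\lambda)r$ for every entire $H$. You instead read the formula off the local two-branch representation on $\mathcal N$ and extend by the identity theorem. Both are valid. The paper's version is slightly more direct once $\lambda$ is known to be entire.
\end{itemize}
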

 
 \begin{proof}
 	With the above labeling, \(c_+^{u}\equiv d_+^{u}\equiv0\). By \eqref{eq:weighted-coefficient-sums}, we have
 	$c_-^{u}=A$ and $d_-^{u}=D$, and the first row of $\Phi_I(u)$ annihilates
 	$\Psi-\lambda_-\mathrm I_2$. Thus
 	\begin{equation}\label{eq:weighted-row-eigen-identities}
 		AP-DQ^\#=\lambda_-A,
 		\qquad
 		AQ+DP^\#=\lambda_-D.
 	\end{equation}
 Since \(u\not\equiv0\), \(A\) and \(D\) do not vanish simultaneously on any
 open set; hence the two local definitions agree on a nonempty open subset of
 each overlap and therefore agree everywhere by analytic continuation.  They therefore patch to a holomorphic function on
 	$\mathcal N=\{\Delta\ne0\}$. At a zero \(z_0\) of \(\Delta\), cancel the largest common power of
 	\(z-z_0\) from \(A\) and \(D\), and keep the notation \(A,D\) for the
 	resulting holomorphic functions. Thus \(A\) and \(D\) have no common factor
 	\(z-z_0\), and in particular \(A(z_0)\) and \(D(z_0)\) are not both zero.
 	After this cancellation, the identities in
\eqref{eq:weighted-row-eigen-identities} still hold on a punctured
neighbourhood of \(z_0\). If \(A(z_0)\ne0\), then
 	\[
 	\lambda=\frac{AP-DQ^\#}{A}
 	\]
 	extends holomorphically across \(z_0\); if \(D(z_0)\ne0\), then
 	\[
 	\lambda=\frac{AQ+DP^\#}{D}
 	\]
 	does so. Then \(\lambda\) has a removable singularity at \(z_0\). Hence $\lambda$ is entire. The functional calculus then gives
 	\[
 	(\Phi_I(u)H(\Psi))_{11}=AH(\lambda),
 	\qquad
 	(\Phi_I(u)H(\Psi))_{12}=DH(\lambda),
 	\]
 	which yields \eqref{eq:weighted-one-branch-formula}.
 \end{proof}
 
 For the same-branch regime, define
 \begin{equation}\label{eq:weighted-one-branch-measure}
 	\begin{aligned}
 		\mu_{(u,\lambda,I)}(\mathcal A)
 		:={}&
 		\int_{\lambda^{-1}(\mathcal A)}
 		|A(z)|^q\mathrm e^{-\frac{\alpha q}{2}|z|^2}
 		\,\mathrm d m_{2,I}(z)\\
 		&+
 		\int_{(\lambda^\#)^{-1}(\mathcal A)}
 		|D^\#(z)|^q\mathrm e^{-\frac{\alpha q}{2}|z|^2}
 		\,\mathrm d m_{2,I}(z),
 	\end{aligned}
 \end{equation}
 and let
 $\mathrm d\nu_{(u,\lambda,I)}(\zeta)
 =\mathrm e^{\alpha q|\zeta|^2/2}\,
 \mathrm d\mu_{(u,\lambda,I)}(\zeta)$. Then
 \begin{equation}\label{eq:weighted-one-branch-Berezin}
 	\mathcal B_{(u,\psi,I)}(w)
 	\approx
 	\widetilde\nu_{(u,\lambda,I)}(w).
 \end{equation}
 Consequently, the same-branch case is again a single scalar
 Fock--Carleson embedding. In particular, boundedness forces
 $\lambda(z)=az+b$ with $|a|\le1$ whenever $u\not\equiv0$.
 
 By \eqref{eq:weighted-HL-local}, if
 $\mathcal H_u\equiv\mathcal L_u\equiv0$, then on every local region one
 coefficient in each pair vanishes. Proposition~\ref{prop:weighted-mixed-degeneracy}
 shows that, when $Q\not\equiv0$, the two vanishing signs must coincide. Hence
 Proposition~\ref{prop:weighted-same-branch-reduction} applies. The only remaining
 case is
 \begin{equation}\label{eq:weighted-genuine-two-branch}
 	\mathcal H_u\not\equiv0
 	\qquad\text{or}\qquad
 	\mathcal L_u\not\equiv0.
 \end{equation} 
If either
\[
\sup_{w\in\mathbb C_I}\mathcal B_{(u,\psi,I)}(w)<\infty,
\]
or \(0<q<p<\infty\) and
\[
\mathcal B_{(u,\psi,I)}\in L^{p/(p-q)}(\mathbb C_I),
\]
then all conclusions of Theorem~\ref{thm:composition-spectral-rigidity}
hold for the same functions \(\Delta,\tau,\lambda_\pm\). 
\paragraph{The  two-branch regime.}
 \label{subsec:weighted-transfer}
 
 Assume \eqref{eq:weighted-genuine-two-branch}. Use the same good/singular
 regions $U_0,U_1,E,\Gamma$ as in
 Lemma~\ref{lem:composition-good-singular-decomposition}, with the same
 constant-discriminant convention. On each nonempty $U_j$, let
 $c_{\pm,j}^{u},d_{\pm,j}^{u}$ be the coefficients
 \eqref{eq:weighted-branch-coefficients}. Define the pull-back measures by
 \begin{align}
 	\nu_{j,u}^{c,\pm}(\mathcal A)
 	&:=
 	\int_{U_j\cap\lambda_{\pm,j}^{-1}(\mathcal A)}
 	|c_{\pm,j}^{u}(z)|^q
 	\mathrm e^{-\frac{\alpha q}{2}|z|^2}
 	\,\mathrm d m_{2,I}(z),
 	\nonumber\\
 	\nu_{j,u}^{d,\pm}(\mathcal A)
 	&:=
 	\int_{U_j\cap\lambda_{\pm,j}^{-1}(\mathcal A)}
 	|d_{\pm,j}^{u}(z)|^q
 	\mathrm e^{-\frac{\alpha q}{2}|z|^2}
 	\,\mathrm d m_{2,I}(z),\nonumber
 \end{align}
 and
 \begin{equation}\label{eq:weighted-Lambda}
 	\mathrm d\Lambda_{j,u}^{c,\pm}(\zeta)
 	:=\mathrm e^{\frac{\alpha q}{2}|\zeta|^2}
 	\,\mathrm d\nu_{j,u}^{c,\pm}(\zeta),
 	\qquad
 	\mathrm d\Lambda_{j,u}^{d,\pm}(\zeta)
 	:=\mathrm e^{\frac{\alpha q}{2}|\zeta|^2}
 	\,\mathrm d\nu_{j,u}^{d,\pm}(\zeta).
 	\nonumber
 \end{equation}
 Also set
 \begin{align}
 	\mathcal B_{U_j}^{u}(w)
 	&:=
 	\int_{U_j}
 	\left|\bigl(u\star(k_w\bullet\psi)\bigr)_I(z)\right|^q
 	\mathrm e^{-\frac{\alpha q}{2}|z|^2}
 	\,\mathrm d m_{2,I}(z),\nonumber
 	\\
 	\mathcal B_E^{u}(w)
 	&:=
 	\int_E
 	\left|\bigl(u\star(k_w\bullet\psi)\bigr)_I(z)\right|^q
 	\mathrm e^{-\frac{\alpha q}{2}|z|^2}
 	\,\mathrm d m_{2,I}(z).\nonumber
 \end{align}
 Then
 \begin{equation}\label{eq:weighted-B-decomposition}
 	\mathcal B_{(u,\psi,I)}
 	=
 	\mathcal B_{U_0}^{u}+\mathcal B_{U_1}^{u}+\mathcal B_E^{u}.
 	\nonumber
 \end{equation}
 
 \begin{proposition} 
 	\label{prop:weighted-two-branch-transfer}
Assume \(u\not\equiv0\), \(Q\not\equiv0\), and
\eqref{eq:weighted-genuine-two-branch}. Then the following assertions hold.
 	
 	\smallskip
 	\noindent\textup{(i)} If either
 	\[
 	\sup_w\mathcal B_{(u,\psi,I)}(w)<\infty,
 	\]
 	or, for $0<q<p<\infty$,
 	\[
 	\mathcal B_{(u,\psi,I)}\in L^{p/(p-q)}(\mathbb C_I),
 	\]
 	then all conclusions of
 	Theorem~\ref{thm:composition-spectral-rigidity} hold for the same functions
 	$\Delta$, $\tau$, and $\lambda_\pm$.
 	
 	\smallskip
 	\noindent\textup{(ii)} If $0<p\le q<\infty$ and
 	$\sup_w\mathcal B_{(u,\psi,I)}(w)<\infty$, then the measures
 	\[
 	\Lambda_{j,u}^{c,\pm},
 	\qquad
 	\Lambda_{j,u}^{d,\pm},
 	\qquad j=0,1,
 	\]
 	are $(p,q)$-Fock--Carleson measures. If
 	$\mathcal B_{(u,\psi,I)}(w)\to0$, they are vanishing
 	$(p,q)$-Fock--Carleson measures.
 	
 	\smallskip
 	\noindent\textup{(iii)} Let $0<q<p<\infty$ and $s=p/(p-q)$. If
 	$\mathcal B_{(u,\psi,I)}\in L^s$, then all the preceding good-region measures
 	are vanishing $(p,q)$-Fock--Carleson measures. Conversely, if
 	$W_{(u,\psi)}$ is bounded, then
 	\[
 	\mathcal B_{U_j}^{u}\in L^s,
 	\qquad
 	\|\mathcal B_{U_j}^{u}\|_{L^s}^{1/q}
 	\lesssim
 	\|W_{(u,\psi)}\|,
 	\qquad j=0,1.
 	\]
 	
 	\smallskip
 	\noindent\textup{(iv)} If $\Delta$ is nonconstant, the singular neighborhood may
 	be chosen so that, for every $\varepsilon>0$,
 	\[
 	\int_E
 	\left|\bigl(W_{(u,\psi)}f\bigr)_I(z)\right|^q
 	\mathrm e^{-\frac{\alpha q}{2}|z|^2}
 	\,\mathrm d m_{2,I}(z)
 	\le
 	\varepsilon\|f\|_{F_\alpha^p(\mathbb H)}^q.
 	\]
 	Moreover, when $0<q<p<\infty$,
 	$\mathcal B_E^{u}\in L^s$, and
 	\[
 	\int_E
 	\left|\bigl(W_{(u,\psi)}f\bigr)_I(z)\right|^q
 	\mathrm e^{-\frac{\alpha q}{2}|z|^2}
 	\,\mathrm d m_{2,I}(z)
 	\lesssim_E
 	\|\mathcal B_E^{u}\|_{L^s}
 	\|f\|_{F_\alpha^p(\mathbb H)}^q.
 	\]
 	If $W_{(u,\psi)}$ is bounded, then
 	\[
 	\|\mathcal B_E^{u}\|_{L^s}^{1/q}
 	\lesssim_E
 	\|W_{(u,\psi)}\|.
 	\]
 \end{proposition}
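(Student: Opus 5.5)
The plan is to run the proofs of Section~\ref{sec:composition-operators} again under the substitution \eqref{eq:weighted-global-substitution}, checking at each step that the argument uses only three ingredients: the local formulas of Proposition~\ref{prop:weighted-local-formula}, the fact that $\Delta,\tau,\lambda_\pm$ and the spectral projections $E_\pm$ depend on $\psi$ alone, and the nonvanishing of one of the global entire functions $\mathcal H_u,\mathcal L_u$.

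For \textup{(i)}, I would first derive the analogues of \eqref{eq:composition-separated-c-infty}--\eqref{eq:composition-separated-d-Ls} with $c_\pm^u,d_\pm^u$ and $\mathcal B_{(u,\psi,I)}$ in place of $c_\pm,d_\pm$ and $\mathcal B_{\psi,I}$. The route is the same as before. Apply the subharmonic local estimate to $(\Phi_I(W_{(u,\psi)}k_w))_{11}(z)=\sum_\sigma c_\sigma^u(z)k_w(\lambda_\sigma(z))$ and to the $(1,2)$ entry. Then use the two-kernel separation lemma \cite[Lemma~2.1]{MR3895397} in the sup case, and the embedding $F^{qs}_\alpha\hookrightarrow F^\infty_\alpha$ in the $L^s$ case. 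By \eqref{eq:weighted-genuine-two-branch}, one of $\mathcal H_u=c_+^uc_-^u\rho^4$ or $\mathcal L_u=d_+^ud_-^u\rho^4$ is a nonzero entire function; call it $\mathcal H_\ast$. Multiplying the two coefficient bounds gives the branch-independent inequality
\[
\log|\mathcal H_\ast(z)|+\tfrac\alpha4|\tau(z)|^2+\tfrac\alpha8|\Delta(z)|-\alpha|z|^2\le C,\qquad z\in\mathcal N,
\]
which is the analogue of \eqref{eq:composition-global-growth}. From here the Jensen/Cauchy argument, the Parseval argument, the asymptotics of $\rho$ on the exterior slit domain, and the Liouville contradiction for $|a_\pm|>1$ go through verbatim, since they use only an entire function that is not identically zero.

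The step I expect to need the most care is the sign constraints in Theorem~\ref{thm:composition-spectral-rigidity}(i): that $d_2\le0$, and that $\Delta$ is either a nonzero constant or has $d_2<0$. These came from $\Delta(x)\le0$ on $\mathbb R$ and from $Q\not\equiv0$, both properties of $\psi$ alone, so they carry over once $\Delta$ is known to be a polynomial of degree at most two. A second point to check is the Liouville step in (iv). There the nonvanishing of $\mathcal H_\ast$ has to come from \eqref{eq:weighted-genuine-two-branch} rather than from $-\Delta_0QQ^\#$; that hypothesis is exactly why the mixed and same-branch degeneracies (Propositions~\ref{prop:weighted-mixed-degeneracy} and \ref{prop:weighted-same-branch-reduction}) were excluded beforehand.

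Part \textup{(i)} makes Lemma~\ref{lem:composition-good-singular-decomposition} available, and then:

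\textup{(ii)} follows the proof of Proposition~\ref{prop:composition-good-p-le-q}. On $U_j$ the branches are separated by $\sqrt{\varepsilon_0}$, so $M_z(c^u_{\pm,j},\lambda_{\pm,j})+M_z(d^u_{\pm,j},\lambda_{\pm,j})\lesssim \sup_w\mathcal B_{(u,\psi,I)}(w)^{1/q}$. Combining this with the affine Gaussian bound \eqref{eq:composition-Gaussian-pullback} bounds the Berezin transforms. When both branches are constant, we again solve the invertible $2\times 2$ kernel system. The vanishing statement comes from evaluating at $w=\lambda_{\sigma,j}(z)$ and $w=\overline{\lambda_{\sigma,j}(z)}$ and splitting $U_0$ into a bounded part and an exterior part.

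\textup{(iii)} repeats Proposition~\ref{prop:composition-good-q-less-p}. The direction from $\mathcal B_{(u,\psi,I)}\in L^s$ to vanishing Carleson measures uses the preimage-disk comparison and the inequality $x^q+y^q\lesssim(Ax-By)_+^q+(Ay-Bx)_+^q$. The converse uses random lattice sums $\sum_m r_m(t)\beta_mk_{a_m}$, Khinchine's inequality, $\ell^{p/q}$--$\ell^s$ duality, and \cite[Lemma~2.3]{MR4811250}, now applied to the bounded operator $W_{(u,\psi)}$.

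\textup{(iv)} follows Proposition~\ref{prop:composition-singular-estimates}. By \eqref{eq:composition-compact-set-estimate}, $\sup_K\|\Phi_I(u)\Phi_I(f\bullet\psi)\|\le C_K\sup_K\|\Phi_I(u)\|\,\|f\|$. Since $m_{2,I}(E)$ can be made small, this gives the $\varepsilon$-estimate. For the $L^s$ bound, \eqref{eq:composition-kernel-matrix-exponential} together with boundedness of $\Phi_I(u)$ on $E$ gives the Gaussian decay $\mathcal B_E^u(w)\lesssim_E \mathrm e^{-c|w|^2}$. In the other direction, for small $|w|$ we have $\Phi_I(u\star(k_w\bullet\psi))\to\Phi_I(u)$ uniformly on $E$, which gives $\mathcal B_E^u(w)\gtrsim\mathcal B_E^u(0)=\int_E|u_I|^q\mathrm e^{-\frac{\alpha q}{2}|z|^2}\,\mathrm dm_{2,I}$. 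This lower bound may be weaker than before because $u_I$ can vanish on part of $E$. I would therefore replace the estimate $\int_E|(W f)_I|^q\lesssim \mathcal B_E(0)\|f\|^q$ with the bound $\lesssim_E \|f\|^q$ and obtain the constant $\mathcal B_E^u(0)>0$ from $u\not\equiv0$; this is why the statement only claims $\lesssim_E$. Finally, $\mathcal B_E^u(0)\le\mathcal B_{(u,\psi,I)}(0)\lesssim\|W_{(u,\psi)}\|^q$ completes the operator bound.
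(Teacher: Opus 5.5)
Your parts (i)--(iii) are the paper's argument. The substitution $c_\pm\mapsto c_\pm^u$, $d_\pm\mapsto d_\pm^u$, $\mathcal B_{\psi,I}\mapsto\mathcal B_{(u,\psi,I)}$ gives the separated coefficient bounds. The Jensen, Parseval and Liouville steps run with whichever of $\mathcal H_u,\mathcal L_u$ is nonzero, and the sign information on $\Delta,\tau$ comes from $\psi$ alone. The $\varepsilon$-estimate and $\mathcal B_E^u\in L^s$ in (iv) are also fine as you have them.

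The one place you depart from the paper is the quantitative singular estimate in (iv), and there the departure is a real weakening. Your workaround bounds $\int_E|(W_{(u,\psi)}f)_I|^q\mathrm e^{-\frac{\alpha q}{2}|z|^2}\,\mathrm dm_{2,I}$ by $C\sup_E\|\Phi_I(u)\|^q\|f\|^q$ and then divides by $\mathcal B_E^u(0)>0$. This gives constants that depend on $u$ through the ratio $\sup_E\|\Phi_I(u)\|^q/\mathcal B_E^u(0)$, which is not uniform in $u$. That is not what $\lesssim_E$ records: the subscript records dependence on $E$, via $\sup_E\|\Psi\|$ and the radius $\delta$, and hence on $\psi$. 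With $u$-dependent constants, the last claim of (iv) is empty. Indeed $\|W_{(u,\psi)}\|^q\gtrsim\mathcal B_{(u,\psi,I)}(0)\ge\mathcal B_E^u(0)>0$, so every finite $u$-dependent quantity is $\lesssim_u\|W_{(u,\psi)}\|^q$.

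Your worry that zeros of $u_I$ weaken the lower bound is unfounded, because every estimate on $E$ is pointwise proportional to $|u_I(z)|$. The value $(u\star h)_I(z)$ is the row $(A(z),D(z))$ times $\Phi_I(h)(z)$, and $|u_I(z)|=|(A(z),D(z))|$. Hence, with the operator norm,
\[
\bigl(1-\|\Phi_I(h)(z)-\mathrm I_2\|\bigr)\,|u_I(z)|\le|(u\star h)_I(z)|\le\|\Phi_I(h)(z)\|\,|u_I(z)|.
\]
Taking $h=f\bullet\psi$ gives
\[
\int_E|(W_{(u,\psi)}f)_I|^q\mathrm e^{-\frac{\alpha q}{2}|z|^2}\,\mathrm dm_{2,I}\lesssim_E\mathcal B_E^u(0)\,\|f\|_{F^p_\alpha(\mathbb H)}^q.
\]
Taking $h=k_w\bullet\psi$ gives both $\mathcal B_E^u(w)\lesssim_E\mathcal B_E^u(0)\,\mathrm e^{-c|w|^2}$ and $\mathcal B_E^u(w)\ge 2^{-q}\mathcal B_E^u(0)$ for $|w|<\delta$, where $\delta$ depends only on $\psi$ and $E$. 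These three bounds are homogeneous in $u$, and they give all of (iv) with constants independent of $u$.

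The paper does the same thing with $\|\Phi_I(u)(z)\|$ in place of $|u_I(z)|$. It uses $E=E^\#$ to compare $\int_E\|\Phi_I(u)\|^q\mathrm e^{-\frac{\alpha q}{2}|z|^2}\,\mathrm dm_{2,I}$ with $\mathcal B_E^u(0)$.
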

 \begin{proof}
 	We record only the changes needed to pass from the unweighted case to the weighted case.  Putting \(F=k_w\) and \(G=0\) in
\eqref{eq:weighted-local-formula-11}, we obtain, on every nondegenerate
 	local region,
 	\[
 	\bigl(\Phi_I(W_{(u,\psi)}k_w)\bigr)_{11}(z)
 	=
 	\sum_{\sigma=\pm}
 	c_\sigma^u(z)k_w(\lambda_\sigma(z)).
 	\]
 	Applying the same local submean estimate as in the proof of
 	Theorem~\ref{thm:composition-spectral-rigidity}, and using
 	\eqref{eq:weighted-testing-quantity}, we obtain
 	\begin{equation}\label{eq:weighted-local-kernel-test-c}
 		\left|
 		\sum_{\sigma=\pm}
 		\overline{c_\sigma^u(z)}K_{\lambda_\sigma(z)}(w)
 		\right|^q
 		\mathrm e^{-\frac{\alpha q}{2}|w|^2}
 		\lesssim
 		\mathrm e^{\frac{\alpha q}{2}|z|^2}
 		\mathcal B_{(u,\psi,I)}(w).
 	\end{equation}
 	The \((1,2)\)-entry gives  the
 	corresponding estimate with \(d_\sigma^u\) in place of \(c_\sigma^u\).
 	
 Thus the separated coefficient estimates from the unweighted case remain valid
under the replacement rule \eqref{eq:weighted-global-substitution}. 
 	If \(\mathcal H_u\not\equiv0\), we use \eqref{eq:weighted-HL-local}; 
 	if \(\mathcal H_u\equiv0\), then \(\mathcal L_u\not\equiv0\) by
 	\eqref{eq:weighted-genuine-two-branch}, and we use \eqref{eq:weighted-HL-local}. 
 	The Jensen and growth argument in
 	Theorem~\ref{thm:composition-spectral-rigidity} then applies verbatim, with
 	\(\mathcal H_u\) or \(\mathcal L_u\) in place of 	\(\mathcal H \) or \(\mathcal L \) . This proves \textup{(i)}.
 	
 	On the good regions, the proofs of
 	Propositions~\ref{prop:composition-good-p-le-q} and
 	\ref{prop:composition-good-q-less-p} use only the separated coefficient
 	estimates, the affine asymptotics of the branches \(\lambda_\pm\), and the
 	local two-branch formulas. These ingredients have just been established in the weighted form. Thus the same arguments yield \textup{(ii)} and
 	\textup{(iii)}. If one of the weighted coefficients vanishes identically, the
 	corresponding measure is zero and there is nothing to prove for that branch.
 	
 	It remains to indicate the estimates on the singular region. Since \(E\) is compact, by \eqref{eq:composition-compact-set-estimate} and the boundedness of
\(\Phi_I(u)\) on \(E\),  
 	\[
 	\sup_{z\in E}
 	\|\Phi_I(W_{(u,\psi)}f)(z)\|
 	\lesssim 
 	\|f\|_{F_\alpha^p(\mathbb H)} .
 	\]
 	Hence, if \(\Delta\) is nonconstant, the disks forming \(E\) may be chosen with
 	arbitrarily small total area, and this gives the first estimate in
 	\textup{(iv)}.
 	
 	For the \(L^s\)-estimate on \(E\), by \eqref{eq:composition-kernel-matrix-exponential}, 
 	\[
 	\sup_{z\in E}
 	\|\Phi_I(u)(z)\Phi_I(k_w\bullet\psi)(z)\|
 	\lesssim
 	\mathrm e^{-\frac{\alpha}{2}|w|^2+C_E|w|}.
 	\]
 	Therefore
 	\[
 	\mathcal B_E^u(w)
 	\lesssim
 	\mathcal B_E^u(0)\,
 	\mathrm e^{-\frac{\alpha q}{2}|w|^2+C_E|w|}
 	\lesssim
 	\mathcal B_E^u(0)\mathrm e^{-c|w|^2}
 	\]
 	for some \(c>0\). Thus
 	\[
 	\mathcal B_E^u\in L^s(\mathbb C_I),
 	\qquad
 	\|\mathcal B_E^u\|_{L^s}
 	\lesssim_E
 	\mathcal B_E^u(0).
 	\]
 	Moreover,
 	\[
 	\Phi_I(u)\Phi_I(k_w\bullet\psi)\longrightarrow \Phi_I(u)
 	\]
 	uniformly on \(E\) as \(w\to0\). Hence
 	\[
 	\mathcal B_E^u(w)\longrightarrow \mathcal B_E^u(0),
 	\qquad w\to0,
 	\]
 	and consequently
 	\[
 	\mathcal B_E^u(0)
 	\lesssim
 	\|\mathcal B_E^u\|_{L^s}.
 	\]
 	
 	Finally, for \(f\in F_\alpha^p(\mathbb H)\), the compact-set estimate gives
 	\[
 	\begin{aligned}
 		&\int_E
 		\left|\bigl(W_{(u,\psi)}f\bigr)_I(z)\right|^q
 		\mathrm e^{-\frac{\alpha q}{2}|z|^2}
 		\,\mathrm d m_{2,I}(z)   \lesssim
 		\|f\|_{F_\alpha^p(\mathbb H)}^q
 		\int_E
 		\|\Phi_I(u)(z)\|^q
 		\mathrm e^{-\frac{\alpha q}{2}|z|^2}
 		\,\mathrm d m_{2,I}(z).
 	\end{aligned}
 	\]
 	Since \(E=E^\#\), 
 	the last integral is controlled by
 	\[
 	\int_E |u_I(z)|^q
 	\mathrm e^{-\frac{\alpha q}{2}|z|^2}
 	\,\mathrm d m_{2,I}(z)
 	=
 	\mathcal B_E^u(0).
 	\]
 	Combining this with the preceding estimate of
 	\(\mathcal B_E^u(0)\) by \(\|\mathcal B_E^u\|_{L^s}\) proves the singular
 	embedding estimate in \textup{(iv)}.
 	
 	If \(W_{(u,\psi)}\) is bounded, then
 	\[
 	\mathcal B_E^u(0)
 	\le
 	\mathcal B_{(u,\psi,I)}(0)
 	\approx
 	\|W_{(u,\psi)}k_0\|_{F_\alpha^q(\mathbb H)}^q
 	\lesssim
 	\|W_{(u,\psi)}\|^q.
 	\]
 	Together with
 	\(\|\mathcal B_E^u\|_{L^s}\lesssim_E \mathcal B_E^u(0)\), this gives
 	\[
 	\|\mathcal B_E^u\|_{L^s}^{1/q}
 	\lesssim_E
 	\|W_{(u,\psi)}\|.
 	\]
 	This completes the proof.
 \end{proof}
 
\paragraph{Proof of the weighted criteria.}
 \label{subsec:weighted-main-proofs}
 
 \begin{proof}[Proof of Theorem~\ref{thm:weighted-p-le-q}]
 	If $u\equiv0$, the result is immediate. Assume $u\not\equiv0$. Since
 	$\|k_w\|_{F_\alpha^p(\mathbb H)}\approx1$,
 	\begin{equation}\label{eq:weighted-kernel-test}
 		\mathcal B_{(u,\psi,I)}(w)
 		\approx
 		\|W_{(u,\psi)}k_w\|_{F_\alpha^q(\mathbb H)}^q.
 	\end{equation}
 	Thus boundedness implies the testing condition and the lower norm estimate.
 	
 	Conversely, assume the testing quantity is bounded. If $Q\equiv0$, use the
 	scalar reduction \eqref{eq:weighted-scalar-Berezin}. If
 	$Q\not\equiv0$ and $\mathcal H_u\equiv\mathcal L_u\equiv0$, use the
 	same-branch reduction \eqref{eq:weighted-one-branch-Berezin}. In the remaining
 	two-branch case, apply
 	Proposition~\ref{prop:weighted-two-branch-transfer}\textup{(ii),(iv)} and the
 	local formula \eqref{eq:weighted-local-formula-11}--
 	\eqref{eq:weighted-local-formula-12}. These three cases give
 	\[
 	\|W_{(u,\psi)}f\|_{F_\alpha^q(\mathbb H)}
 	\lesssim
 	\sup_w\mathcal B_{(u,\psi,I)}(w)^{1/q}
 	\|f\|_{F_\alpha^p(\mathbb H)}.
 	\]
 	Together with \eqref{eq:weighted-kernel-test}, this proves the norm equivalence.
 	
 	For compactness, necessity again follows from
 	\eqref{eq:weighted-kernel-test}. Conversely, in the scalar and same-branch regimes, the associated scalar pull-back measure is vanishing. In the
 	two-branch regime, the good-region measures are vanishing by
 	Proposition~\ref{prop:weighted-two-branch-transfer}\textup{(ii)}. By \eqref{eq:composition-compact-open-continuity},
\(\Phi_I(f_n\bullet\psi)\to0\) uniformly on compact subsets; multiplication by
\(\Phi_I(u)\) preserves this convergence on compact sets. Hence the
 	singular contribution tends to zero. Therefore $W_{(u,\psi)}$ is compact.
 	
 	A condition on one slice yields boundedness or compactness of the operator; the
 	kernel test then yields the corresponding condition on every slice.
 \end{proof}
 
 \begin{proof}[Proof of Theorem~\ref{thm:weighted-q-less-p}]
 	Put $s=p/(p-q)$. If $u\equiv0$, there is nothing to prove.
 	
 	Assume first that
 	$\mathcal B_{(u,\psi,I)}\in L^s$. In the scalar regime use
 	\eqref{eq:weighted-scalar-Berezin}; in the same-branch regime use
 	\eqref{eq:weighted-one-branch-Berezin}. In both cases the complex theorem for
 	$q<p$ gives a vanishing Carleson embedding. In the two-branch regime,
 	Proposition~\ref{prop:weighted-two-branch-transfer}\textup{(iii),(iv)} gives
 	vanishing embeddings on the good regions and the required singular estimate.
 	Thus $W_{(u,\psi)}$ is compact, and
 	\[
 	\|W_{(u,\psi)}\|
 	\lesssim
 	\|\mathcal B_{(u,\psi,I)}\|_{L^s}^{1/q}.
 	\]
 	
 	Conversely, suppose that $W_{(u,\psi)}$ is bounded. The scalar and same-branch
 	reductions give the $L^s$ condition through the corresponding complex
 	Fock--Carleson theorem. In the two-branch regime,
 	Proposition~\ref{prop:weighted-two-branch-transfer}\textup{(iii),(iv)} gives
 	\[
 	\mathcal B_{U_0}^{u},\ \mathcal B_{U_1}^{u},\ \mathcal B_E^{u}
 	\in L^s
 	\]
 	and
 	\[
 	\|\mathcal B_{(u,\psi,I)}\|_{L^s}^{1/q}
 	\lesssim
 	\|W_{(u,\psi)}\|.
 	\]
 	Together with the preceding upper estimate, this proves the norm equivalence.
 	Since compactness implies boundedness, all three assertions are equivalent.
 	As before, validity on one slice implies validity on every slice.
 \end{proof}

  \subsection{Products of Volterra-Type Integral and Composition Operators}
  \label{sec:volterra-composition-products}
  
 In this section we study the operator \(V_{(g,\psi)}\) introduced in
Definition~\ref{def:volterra-composition}. 
  The results follow from the weighted composition theory of
  Section~\ref{sec:weighted-composition-operators} by means of the
  following slice Littlewood--Paley estimate.
 \begin{lemma}
 	\label{lem:little}
 	Let \(0<p<\infty\) and \(h\in\mathcal{SR}(\mathbb H)\). Then, for every
 	\(I\in\mathbb S\),
 	\[
 	\int_{\mathbb C_I}
 	|h(z)|^p
 	\mathrm e^{-\frac{\alpha p}{2}|z|^2}
 	\,\mathrm d m_{2,I}(z)
 	\approx
 	|h(0)|^p
 	+
 	\int_{\mathbb C_I}
 	\frac{|h'(z)|^p}{(1+|z|)^p}
 	\mathrm e^{-\frac{\alpha p}{2}|z|^2}
 	\,\mathrm d m_{2,I}(z).
 	\]
 	The constants are independent of \(h\) and of the chosen slice.
 \end{lemma}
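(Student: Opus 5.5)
The plan is to reduce the statement to the classical complex Littlewood--Paley estimate for Fock spaces on one fixed slice, through the splitting lemma. Fix \(I\in\mathbb S\) and choose \(J\perp I\). Write \(h_I=H+KJ\) with \(H,K\in\mathcal O(\mathbb C_I)\). By the compatibility convention for primes recorded after Lemma~\ref{lem:splitting}, the slice derivative satisfies \((h')_I=H'+K'J\). Also \(h(0)=H(0)+K(0)J\).

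The pointwise comparison \(|a+bJ|^p\approx|a|^p+|b|^p\) for \(a,b\in\mathbb C_I\) then splits each side of the claimed equivalence into an \(H\)-part and a \(K\)-part. The constants here depend only on \(p\); indeed \(|a+bJ|^2=|a|^2+|b|^2\) exactly. Precisely:
\[
\int_{\mathbb C_I}|h(z)|^p\mathrm e^{-\frac{\alpha p}{2}|z|^2}\,\mathrm dm_{2,I}
\approx
\int_{\mathbb C_I}\bigl(|H|^p+|K|^p\bigr)\mathrm e^{-\frac{\alpha p}{2}|z|^2}\,\mathrm dm_{2,I}.
\]
The analogous comparisons hold for \(|h(0)|^p\) and for the derivative integral with weight \((1+|z|)^{-p}\). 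It therefore suffices to prove, for every entire \(\mathbb C_I\)-valued \(H\),
\[
\int_{\mathbb C_I}|H|^p\mathrm e^{-\frac{\alpha p}{2}|z|^2}\,\mathrm dm_{2,I}
\approx
|H(0)|^p+\int_{\mathbb C_I}\frac{|H'|^p}{(1+|z|)^p}\mathrm e^{-\frac{\alpha p}{2}|z|^2}\,\mathrm dm_{2,I},
\]
and then add the estimates for \(H\) and \(K\).

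Since \(\mathbb C_I\) is isometrically and holomorphically identified with \(\mathbb C\) via \(x+yI\mapsto x+y\mathrm i\), this scalar estimate is exactly the classical Littlewood--Paley (derivative) characterization of \(F^p_\alpha(\mathbb C)\), valid for all \(0<p<\infty\). It should be understood as an identity in \([0,\infty]\), with both sides finite simultaneously. I would cite it from the standard Fock-space literature, e.g.\ \cite{MR2934601} or the Volterra papers \cite{MR2957216,MR3192295}, rather than reprove it.

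If a self-contained argument were wanted, I would proceed as follows. For the ``\(\gtrsim\) derivative'' direction, apply a Cauchy estimate on \(D_I(z,1/(1+|z|))\) to \(H\), combine it with the subharmonic mean-value estimate for \(|H|^p\mathrm e^{-\frac{\alpha p}{2}|\cdot|^2}\) on such disks, and use Fubini. For the reverse direction, integrate radially, \(H(z)=H(0)+\int_0^1 H'(tz)z\,\mathrm dt\), and use a Hardy-type inequality against the Gaussian weight. The main obstacle is this reverse direction when \(p<1\), where Minkowski's inequality fails. The standard remedy is a lattice/subharmonicity argument, and this is why I would prefer citing the known complex result.

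Finally, the constants come only from the complex estimate, which depends on \(\alpha,p\), and from the splitting comparison, which depends on \(p\). Since the identification \(\mathbb C_I\cong\mathbb C\) is isometric for every \(I\), they are independent of \(h\) and of \(I\), as claimed.
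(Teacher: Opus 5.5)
Your proposal is correct and follows essentially the same route as the paper. The paper also splits \(h_I=H+KJ\) with \(J\perp I\), uses \(|a+bJ|^p\approx|a|^p+|b|^p\) together with \((h')_I=H'+K'J\), and applies the classical complex Fock-space Littlewood--Paley equivalence \cite{MR2957216,MR3192295} to \(H\) and \(K\) separately. Your explicit treatment of \(h(0)=H(0)+K(0)J\), of finiteness in \([0,\infty]\), and of the slice-independence of the constants via the isometric identification \(\mathbb C_I\cong\mathbb C\) spells out points the paper leaves implicit.
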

 
 \begin{proof}
 	Choose \(J\in\mathbb S\) with \(J\perp I\) and write $
 	h_I=H+KJ$. 
 	Then
 	\[
 	|h_I(z)|^p\approx |H(z)|^p+|K(z)|^p,
 	\qquad
 	|(h')_I(z)|^p\approx |H'(z)|^p+|K'(z)|^p.
 	\]
 	Applying the classical Littlewood--Paley equivalence on complex Fock
 	spaces separately to \(H\) and \(K\), see
 	\cite[Proposition~1]{MR2957216} and
 	\cite[Lemma~1]{MR3192295}, gives the result.
 \end{proof}

For \(I\in\mathbb S\) and \(w\in\mathbb C_I\), define
\begin{equation}\label{eq:volterra-testing-quantity}
	\mathcal B_{(g,\psi,I)}^V(w)
	:=
	\int_{\mathbb C_I}
	\left|
	\bigl(g'\star(k_w\bullet\psi)\bigr)_I(z)
	\right|^q
	\frac{\mathrm e^{-\frac{\alpha q}{2}|z|^2}}
	{(1+|z|)^q}
	\,\mathrm d m_{2,I}(z).
\end{equation}

The reduction to the weighted-composition case is as follows. Since
\[
\bigl(V_{(g,\psi)}f\bigr)(0)=0,
\qquad
\bigl(V_{(g,\psi)}f\bigr)'
=
g'\star(f\bullet\psi),
\]
Lemma~\ref{lem:little} gives
\begin{equation}\label{eq:volterra-LP-reduction}
	\|V_{(g,\psi)}f\|_{F_\alpha^q(\mathbb H)}^q
	\approx
	\int_{\mathbb C_I}
	\left|
	\bigl(g'\star(f\bullet\psi)\bigr)_I(z)
	\right|^q
	\frac{\mathrm e^{-\frac{\alpha q}{2}|z|^2}}
	{(1+|z|)^q}
	\,\mathrm d m_{2,I}(z).
\end{equation}
Thus the proofs of the weighted-composition criteria apply with
\(u\) replaced by \(g'\), and with the measure
\[
\mathrm e^{-\frac{\alpha q}{2}|z|^2}\,\mathrm d m_{2,I}(z)
\]
replaced by
\[
\frac{\mathrm e^{-\frac{\alpha q}{2}|z|^2}}
{(1+|z|)^q}
\,\mathrm d m_{2,I}(z).
\]
Equivalently, on each nondegenerate local region, the coefficients
\(c_\pm^u,d_\pm^u\) in \eqref{eq:weighted-branch-coefficients} are replaced by
\[
c_\pm^V:=(\Phi_I(g')E_\pm)_{11},
\qquad
d_\pm^V:=(\Phi_I(g')E_\pm)_{12}.
\]
The formula \eqref{eq:composition-M-def} becomes
\[
M_z^V(H,\lambda)
:=
\frac{|H(z)|}{1+|z|}
\exp\!\left\{
\frac{\alpha}{2}
\bigl(|\lambda(z)|^2-|z|^2\bigr)
\right\}.
\]
Since \(1+|\zeta|\asymp1+|z|\) on every fixed disk \(D_I(z,r)\), this extra
factor does not affect the local submean estimates. On the compact singular region
it is bounded above and below, while on the good unbounded regions it only improves
the estimates. Therefore no new argument is needed, and we obtain the following
criteria.

\begin{theorem}\label{thm:volterra-p-le-q}
	Let \(g,\psi\in\mathcal{SR}(\mathbb H)\) and \(0<p\le q<\infty\). Then the
	following assertions hold.
	
	\smallskip
	\noindent\textup{(i)}
	The operator $
	V_{(g,\psi)}:
	F_\alpha^p(\mathbb H)\longrightarrow F_\alpha^q(\mathbb H)$ 
	is bounded if and only if, for some, equivalently for every,
	\(I\in\mathbb S\),
	\[
	\sup_{w\in\mathbb C_I}
	\mathcal B_{(g,\psi,I)}^V(w)<\infty .
	\]
	Moreover,
	\[
	\|V_{(g,\psi)}\|
	\approx
	\sup_{w\in\mathbb C_I}
	\mathcal B_{(g,\psi,I)}^V(w)^{1/q}.
	\]
	
	\smallskip
	\noindent\textup{(ii)}
	The operator \(V_{(g,\psi)}\) is compact if and only if, for some,
	equivalently for every, \(I\in\mathbb S\),
	\[
	\mathcal B_{(g,\psi,I)}^V(w)\longrightarrow0
	\qquad
	\text{as } |w|\to\infty,\quad w\in\mathbb C_I .
	\]
\end{theorem}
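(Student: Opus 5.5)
The plan is to reduce Theorem~\ref{thm:volterra-p-le-q} to Theorem~\ref{thm:weighted-p-le-q}, run with the weight \(u=g'\) and with the Gaussian measure replaced by \(\mathrm e^{-\frac{\alpha q}{2}|z|^2}(1+|z|)^{-q}\,\mathrm d m_{2,I}(z)\).

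\textbf{Step 1: necessity.} By \eqref{eq:volterra-LP-reduction}, which rests on Lemma~\ref{lem:little} and the identities \((V_{(g,\psi)}f)(0)=0\), \((V_{(g,\psi)}f)'=g'\star(f\bullet\psi)\), we have
\[
\|V_{(g,\psi)}k_w\|_{F_\alpha^q(\mathbb H)}^q\approx\mathcal B^V_{(g,\psi,I)}(w).
\]
Since \(\|k_w\|_{F_\alpha^p}\approx1\), boundedness gives the testing condition together with the lower norm bound. If \(V_{(g,\psi)}\) is compact, then, because \(k_w\to0\) uniformly on compacta as \(|w|\to\infty\), we get \(\mathcal B^V\to0\).

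\textbf{Step 2: sufficiency, by the three regimes for the weight \(g'\).} If \(g'\equiv0\), the statement is trivial.

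\emph{Scalar regime \(Q\equiv0\).} The right-hand side of \eqref{eq:volterra-LP-reduction} is a scalar Fock--Carleson embedding for the pull-back measure with density \((|A|^q+|D^\#|^q)(1+|z|)^{-q}\mathrm e^{-\frac{\alpha q}{2}|z|^2}\) under \(P\). Here \(g'_I=A+DJ\). The Berezin transform of this measure is comparable to \(\mathcal B^V\), as in \eqref{eq:weighted-scalar-Berezin}, so \cite[Theorem~2.6]{MR3248473} applies.

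\emph{Same-branch regime \(\mathcal H_{g'}\equiv\mathcal L_{g'}\equiv0\).} Proposition~\ref{prop:weighted-same-branch-reduction} produces a single entire branch \(\lambda\), and the argument again reduces to one scalar embedding.

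\emph{Two-branch regime.} First verify the spectral rigidity of Theorem~\ref{thm:composition-spectral-rigidity} under \(\sup\mathcal B^V<\infty\). The local submean estimate now yields
\[
\Bigl|\sum_\sigma\overline{c^V_\sigma(z)}K_{\lambda_\sigma(z)}(w)\Bigr|^q\mathrm e^{-\frac{\alpha q}{2}|w|^2}\lesssim(1+|z|)^q\,\mathrm e^{\frac{\alpha q}{2}|z|^2}\,\mathcal B^V(w),
\]
because \(1+|\zeta|\asymp1+|z|\) on \(D_I(z,r)\). In the growth inequality \eqref{eq:composition-global-growth}, the extra factor contributes only an additive \(2q^{-1}\cdot q\log(1+|z|)\)-type term. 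Jensen/Cauchy with growth bounds like \(1+R^2+\log R\) still kill the coefficients of degree \(\ge3\) in \(\Delta\) and of degree \(\ge2\) in \(\tau\). The Liouville argument giving \(|a_\pm|\le1\) survives, since Gaussian decay beats polynomial factors.

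Next, on the good regions \(U_0,U_1\) from Lemma~\ref{lem:composition-good-singular-decomposition}, define measures \(\Lambda^{V,c,\pm}_j\) and \(\Lambda^{V,d,\pm}_j\) with densities \(|c^V_{\pm,j}|^q(1+|z|)^{-q}\mathrm e^{-\frac{\alpha q}{2}|z|^2}\). The bound \(M^V_z\lesssim\mathfrak B^V_\infty\) follows from uniform separation of the branches. The affine Gaussian estimate \eqref{eq:composition-Gaussian-pullback} then shows these are \((p,q)\)-Fock--Carleson measures, vanishing when \(\mathcal B^V\to0\), exactly as in Proposition~\ref{prop:composition-good-p-le-q}.

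On the compact singular set \(E\), the weight \((1+|z|)^{-q}\) is bounded above and below. The estimate \eqref{eq:composition-compact-set-estimate} times \(\sup_E\|\Phi_I(g')\|\) gives a contribution at most \(\varepsilon\|f\|^q\) with \(\varepsilon\le(\mathfrak B^V_\infty)^q\), after shrinking \(E\).

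\textbf{Step 3: conclusion.} Sum the good and singular contributions through the local formula \eqref{eq:weighted-local-formula-11}--\eqref{eq:weighted-local-formula-12}, with \(u=g'\). This gives the upper norm bound, and with Step 1 the norm equivalence.

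For compactness, take \(f_n\) bounded and \(f_n\to0\) locally uniformly. The vanishing embeddings handle the good part. On \(E\), \eqref{eq:composition-compact-open-continuity} together with boundedness of \(\Phi_I(g')\) handles the singular part. Then \eqref{eq:volterra-LP-reduction} gives \(\|V_{(g,\psi)}f_n\|\to0\).

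Slice independence follows as before: the condition on one slice yields boundedness or compactness, and the kernel test transfers the condition to every slice.

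I expect the main obstacle to be Step 2's rigidity transfer. The extra factor \(1+|z|\) must not spoil the Jensen argument. The factor \(q\log(1+|z|)\) has circular mean \(O(\log R)\), which is dominated by the \(\alpha R^2\) term, so I would check this carefully but expect it to work.
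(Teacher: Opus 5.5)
Your proposal is correct and follows the paper's route. You use the Littlewood--Paley reduction \eqref{eq:volterra-LP-reduction}, then rerun the weighted-composition argument (scalar, same-branch and two-branch regimes, with the good/singular decomposition) with $u=g'$ and the Gaussian measure replaced by $(1+|z|)^{-q}\mathrm e^{-\frac{\alpha q}{2}|z|^2}\,\mathrm d m_{2,I}$. Your explicit tracking of the extra $(1+|z|)$ factor is more careful than the paper's one-line remark that it ``does not affect the local submean estimates'': it is a polynomial loss absorbed by Jensen and Liouville in the rigidity step, and it cancels exactly in the good-region bound $M_z^V\lesssim\mathfrak B_\infty^V$.
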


\begin{theorem}\label{thm:volterra-q-less-p}
	Let \(g,\psi\in\mathcal{SR}(\mathbb H)\) and \(0<q<p<\infty\). Put
	\(s=p/(p-q)\). Then the following assertions are equivalent:
	\begin{enumerate}[(i)]
		\item
		\(V_{(g,\psi)}:F_\alpha^p(\mathbb H)\to F_\alpha^q(\mathbb H)\) is bounded;
		
		\item
		\(V_{(g,\psi)}:F_\alpha^p(\mathbb H)\to F_\alpha^q(\mathbb H)\) is compact;
		
		\item
		for some, equivalently for every, \(I\in\mathbb S\),
		\[
		\mathcal B_{(g,\psi,I)}^V
		\in L^s(\mathbb C_I,\mathrm d m_{2,I}).
		\]
	\end{enumerate}
	Moreover,
	\[
	\|V_{(g,\psi)}\|
	\approx
	\left(
	\int_{\mathbb C_I}
	\mathcal B_{(g,\psi,I)}^V(w)^s
	\,\mathrm d m_{2,I}(w)
	\right)^{1/(qs)}.
	\]
\end{theorem}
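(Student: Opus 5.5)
The plan is to reduce Theorem~\ref{thm:volterra-q-less-p} to the weighted composition criterion, Theorem~\ref{thm:weighted-q-less-p}, with $u$ replaced by $g'$. The one change is the measure: $\mathrm e^{-\frac{\alpha q}{2}|z|^2}\,\mathrm d m_{2,I}$ becomes the damped measure $(1+|z|)^{-q}\mathrm e^{-\frac{\alpha q}{2}|z|^2}\,\mathrm d m_{2,I}$. Since $V_{(g,\psi)}f(0)=0$ and $(V_{(g,\psi)}f)'=g'\star(f\bullet\psi)$, Lemma~\ref{lem:little} gives \eqref{eq:volterra-LP-reduction}. Thus $\|V_{(g,\psi)}f\|_{F^q_\alpha}^q$ is comparable to the damped $L^q$ integral of $(g'\star(f\bullet\psi))_I$. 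Testing on $f=k_w$ gives
\[
\mathcal B^V_{(g,\psi,I)}(w)\approx\|V_{(g,\psi)}k_w\|^q_{F^q_\alpha(\mathbb H)}.
\]
The implication (ii)$\Rightarrow$(i) is trivial. If $g'\equiv0$ everything vanishes, so I assume $g'\not\equiv0$.

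For (iii)$\Rightarrow$(ii), I split into the same three regimes as in the weighted proof, with $A,D$ the splitting components of $g'$.

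\textbf{Scalar regime ($Q\equiv0$).} The norm is a scalar Fock--Carleson embedding for the pull-back of $(|A|^q+|D^\#|^q)(1+|z|)^{-q}\mathrm e^{-\frac{\alpha q}{2}|z|^2}$ under $P$. Its Berezin transform is comparable to $\mathcal B^V$, and the complex $q<p$ theorem \cite[Theorem~2.6]{MR3248473} gives a vanishing embedding with the norm bound.

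\textbf{Same-branch regime.} Proposition~\ref{prop:weighted-same-branch-reduction} applies verbatim to $u=g'$, since it is purely algebraic. The result is again a single scalar embedding, now with the extra damping factor in the density.

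\textbf{Two-branch regime.} Here I redo Proposition~\ref{prop:weighted-two-branch-transfer} with $c^V_\pm,d^V_\pm$ and $M^V_z$. The local submean estimate is unchanged because $1+|\zeta|\asymp1+|z|$ on $D_I(z,r)$. This gives the separated coefficient estimates with the extra factor $(1+|z|)^{-1}$.

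The rigidity conclusions of Theorem~\ref{thm:composition-spectral-rigidity} then follow as before. The only new ingredient is a factor $(1+|z|)^{q}$ in the growth inequality \eqref{eq:composition-global-growth}, that is, an additive $O(\log(2+|z|))$. This is harmless both in the Jensen averaging and in the Liouville step, because the decay there is Gaussian. The good/singular decomposition of Lemma~\ref{lem:composition-good-singular-decomposition} is therefore available.

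On the good regions, the proof of Proposition~\ref{prop:composition-good-q-less-p}(i) needs only three inputs: the separated bounds, the affine asymptotics of $\lambda_\pm$, and the preimage-disk comparison \eqref{eq:composition-preimage-disk-comparison}. It therefore shows $\Lambda^{c,\pm}_{j}(D(\cdot,r))\in L^s$ for the damped densities. On the compact set $E$, the weight $(1+|z|)^{-q}$ is bounded above and below. Hence Proposition~\ref{prop:composition-singular-estimates}(ii) transfers: $\mathcal B^V_E$ has Gaussian decay, and $\mathcal B^V_E(0)\lesssim\|\mathcal B^V_E\|_{L^s}$. Summing the pieces gives compactness together with $\|V_{(g,\psi)}\|\lesssim\|\mathcal B^V\|_{L^s}^{1/q}$.

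For (i)$\Rightarrow$(iii), boundedness gives $\sup_w\mathcal B^V<\infty$, so the rigidity and the decomposition hold. On $U_0$ I run the Khinchine--lattice argument of Proposition~\ref{prop:composition-good-q-less-p}(ii) with random sums $\sum_m r_m(t)\beta_mk_{a_m}$. The estimate \eqref{eq:composition-Khinchine-good} now carries the damping factor, and it is obtained from \eqref{eq:volterra-LP-reduction} rather than from the definition of the norm. Duality between $\ell^{p/q}$ and $\ell^s$, together with \cite[Lemma~2.3]{MR4811250}, then gives the local mass functions in $L^s$. By \cite[Theorem~2.7]{MR3248473} this yields $\mathcal B^V_{U_j}\in L^s$. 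The singular piece is handled through $\mathcal B^V_E(0)\le\mathcal B^V(0)\lesssim\|V_{(g,\psi)}\|^q$.

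The main obstacle I expect is checking that the lattice and Khinchine step survives the polynomial damping. The concern is that $(1+|z|)^{-q}$ must be comparable on the preimage sets $E^\pm_m$ to $(1+|a_m|)^{-q}$ divided by a fixed constant. This holds because $\lambda_\pm$ is affine with nonzero slope (or constant) on $U_0$. If the slope vanishes, I would use the constant-branch linear-system argument instead. Independence of the slice follows as in the earlier proofs.
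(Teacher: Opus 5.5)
Your proposal is correct and follows essentially the same route as the paper. The Littlewood--Paley reduction \eqref{eq:volterra-LP-reduction} turns $V_{(g,\psi)}$ into the weighted composition problem with $u=g'$ and the damped measure $(1+|z|)^{-q}\mathrm e^{-\frac{\alpha q}{2}|z|^2}\,\mathrm d m_{2,I}$; the scalar, same-branch and two-branch arguments are then rerun, with the damping factor harmless in the submean estimates, on the compact singular set, and on the good regions. Your explicit check that it only adds an $O(\log(2+|z|))$ term in the Jensen/Liouville rigidity step is a useful detail the paper leaves implicit, whereas the comparability of $1+|z|$ with $1+|a_m|$ on $E_m^\pm$ is not needed, since the damping factor is simply absorbed into the pulled-back density.
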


	
	\appendix
	 
	\section*{Acknowledgments}
	The authors would like to express their sincere gratitude to Professor Yuxia Liang
	of Tianjin Normal University for her constant encouragement, valuable discussions,
	and helpful comments on this paper.

	

	\bibliographystyle{amsplain} 
	\bibliography{references} 

@article {MR3482788,
	AUTHOR = {Ren, Guangbin and Wang, Xieping},
	TITLE = {Slice regular composition operators},
	JOURNAL = {Complex Var. Elliptic Equ.},
	FJOURNAL = {Complex Variables and Elliptic Equations. An International
	Journal},
	VOLUME = {61},
	YEAR = {2016},
	NUMBER = {5},
	PAGES = {682--711},
	ISSN = {1747-6933,1747-6941},
	MRCLASS = {30G35 (32A26)},
	MRNUMBER = {3482788},
	DOI = {10.1080/17476933.2015.1113270},
	URL = {https://doi.org/10.1080/17476933.2015.1113270},
}

@article{ueki2007weighted,
	title={Weighted composition operator on the Fock space},
	author={Ueki, Sei-Ichiro},
	journal={Proceedings of the American Mathematical Society},
	volume={135},
	number={5},
	pages={1405--1410},
	year={2007}
}

@article {MR881273,
	AUTHOR = {Shapiro, Joel H.},
	TITLE = {The essential norm of a composition operator},
	JOURNAL = {Ann. of Math. (2)},
	FJOURNAL = {Annals of Mathematics. Second Series},
	VOLUME = {125},
	YEAR = {1987},
	NUMBER = {2},
	PAGES = {375--404},
	ISSN = {0003-486X,1939-8980},
	MRCLASS = {47B38},
	MRNUMBER = {881273},
	MRREVIEWER = {Carl\ C.\ Cowen},
	DOI = {10.2307/1971314},
	URL = {https://doi.org/10.2307/1971314},
}

@incollection {MR2198381,
	AUTHOR = {Smith, Wayne},
	TITLE = {Brennan's conjecture for weighted composition operators},
	BOOKTITLE = {Recent advances in operator-related function theory},
	SERIES = {Contemp. Math.},
	VOLUME = {393},
	PAGES = {209--214},
	PUBLISHER = {Amer. Math. Soc., Providence, RI},
	YEAR = {2006},
	ISBN = {0-8218-3925-X},
	MRCLASS = {47B33 (30C35 30D55)},
	MRNUMBER = {2198381},
	MRREVIEWER = {Ruhan\ Zhao},
	DOI = {10.1090/conm/393/07380},
	URL = {https://doi.org/10.1090/conm/393/07380},
}

@article {MR2609242,
    AUTHOR = {Isralowitz, Josh and Zhu, Kehe},
     TITLE = {Toeplitz operators on the {F}ock space},
   JOURNAL = {Integral Equations Operator Theory},
  FJOURNAL = {Integral Equations and Operator Theory},
    VOLUME = {66},
      YEAR = {2010},
    NUMBER = {4},
     PAGES = {593--611},
      ISSN = {0378-620X,1420-8989},
   MRCLASS = {47B35 (30H20 46E22 47B07)},
  MRNUMBER = {2609242},
MRREVIEWER = {Georg\ T.\ Schneider},
       DOI = {10.1007/s00020-010-1768-9},
       URL = {https://doi.org/10.1007/s00020-010-1768-9},
}

@article {MR3248473,
    AUTHOR = {Hu, Zhangjian and Lv, Xiaofen},
     TITLE = {Toeplitz operators on {F}ock spaces {$F^p(\varphi)$}},
   JOURNAL = {Integral Equations Operator Theory},
  FJOURNAL = {Integral Equations and Operator Theory},
    VOLUME = {80},
      YEAR = {2014},
    NUMBER = {1},
     PAGES = {33--59},
      ISSN = {0378-620X,1420-8989},
   MRCLASS = {47B38 (32A37)},
  MRNUMBER = {3248473},
MRREVIEWER = {N.\ L.\ Vasilevski\u i},
       DOI = {10.1007/s00020-014-2168-3},
       URL = {https://doi.org/10.1007/s00020-014-2168-3},
}

@article {MR2964691,
    AUTHOR = {Cho, Hong Rae and Zhu, Kehe},
     TITLE = {Fock-{S}obolev spaces and their {C}arleson measures},
   JOURNAL = {J. Funct. Anal.},
  FJOURNAL = {Journal of Functional Analysis},
    VOLUME = {263},
      YEAR = {2012},
    NUMBER = {8},
     PAGES = {2483--2506},
      ISSN = {0022-1236,1096-0783},
   MRCLASS = {46E35 (28A12 32A37 42B35)},
  MRNUMBER = {2964691},
MRREVIEWER = {Agnieszka\ Ka\l amajska},
       DOI = {10.1016/j.jfa.2012.08.003},
       URL = {https://doi.org/10.1016/j.jfa.2012.08.003},
}

@article {MR2342670,
    author   = {{\v{Z}}eljko {\v{C}}u{\v{c}}kovi{\'c} and Ruhan Zhao},
     TITLE = {Weighted composition operators between different weighted
              {B}ergman spaces and different {H}ardy spaces},
   JOURNAL = {Illinois J. Math.},
  FJOURNAL = {Illinois Journal of Mathematics},
    VOLUME = {51},
      YEAR = {2007},
    NUMBER = {2},
     PAGES = {479--498},
      ISSN = {0019-2082,1945-6581},
   MRCLASS = {47B33 (30D55 46E15)},
  MRNUMBER = {2342670},
MRREVIEWER = {William\ Thomas\ Ross},
       URL = {http://projecteuclid.org/euclid.ijm/1258138425},
}

@article{MR2078907,
  author   = {{\v{Z}}eljko {\v{C}}u{\v{c}}kovi{\'c} and Ruhan Zhao},
  title    = {Weighted composition operators on the {B}ergman space},
  journal  = {J. London Math. Soc. (2)},
  fjournal = {Journal of the London Mathematical Society. Second Series},
  volume   = {70},
  year     = {2004},
  number   = {2},
  pages    = {499--511},
  issn     = {0024-6107,1469-7750},
  mrclass  = {47B33 (30D55 47B10)},
  mrnumber = {2078907},
  doi      = {10.1112/S0024610704005605},
  url      = {https://doi.org/10.1112/S0024610704005605},
}

@article {MR2886614,
	AUTHOR = {Matache, Valentin and Smith, Wayne},
	TITLE = {Composition operators on a class of analytic function spaces
	related to {B}rennan's conjecture},
	JOURNAL = {Complex Anal. Oper. Theory},
	FJOURNAL = {Complex Analysis and Operator Theory},
	VOLUME = {6},
	YEAR = {2012},
	NUMBER = {1},
	PAGES = {139--162},
	ISSN = {1661-8254,1661-8262},
	MRCLASS = {47B33 (30H20)},
	MRNUMBER = {2886614},
	MRREVIEWER = {Julio\ C\'esar\ Ramos Fern\'andez},
	DOI = {10.1007/s11785-010-0090-5},
	URL = {https://doi.org/10.1007/s11785-010-0090-5},
}

@article {MR854144,
	AUTHOR = {MacCluer, Barbara D. and Shapiro, Joel H.},
	TITLE = {Angular derivatives and compact composition operators on the
	{H}ardy and {B}ergman spaces},
	JOURNAL = {Canad. J. Math.},
	FJOURNAL = {Canadian Journal of Mathematics. Journal Canadien de
	Math\'ematiques},
	VOLUME = {38},
	YEAR = {1986},
	NUMBER = {4},
	PAGES = {878--906},
	ISSN = {0008-414X,1496-4279},
	MRCLASS = {47B05 (30D55 47B38)},
	MRNUMBER = {854144},
	MRREVIEWER = {Warren\ R.\ Wogen},
	DOI = {10.4153/CJM-1986-043-4},
	URL = {https://doi.org/10.4153/CJM-1986-043-4},
}

@article {MR223914,
	AUTHOR = {Nordgren, Eric A.},
	TITLE = {Composition operators},
	JOURNAL = {Canadian J. Math.},
	FJOURNAL = {Canadian Journal of Mathematics. Journal Canadien de
	Math\'ematiques},
	VOLUME = {20},
	YEAR = {1968},
	PAGES = {442--449},
	ISSN = {0008-414X,1496-4279},
	MRCLASS = {47.25},
	MRNUMBER = {223914},
	MRREVIEWER = {E.\ R.\ Deal},
	DOI = {10.4153/CJM-1968-040-4},
	URL = {https://doi.org/10.4153/CJM-1968-040-4},
}

@article {MR192062,
	AUTHOR = {Ryff, John V.},
	TITLE = {Subordinate {$H\sp{p}$} functions},
	JOURNAL = {Duke Math. J.},
	FJOURNAL = {Duke Mathematical Journal},
	VOLUME = {33},
	YEAR = {1966},
	PAGES = {347--354},
	ISSN = {0012-7094,1547-7398},
	MRCLASS = {30.67},
	MRNUMBER = {192062},
	MRREVIEWER = {R.\ J.\ Libera},
	URL = {http://projecteuclid.org/euclid.dmj/1077376389},
}

@article {MR1575208,
	AUTHOR = {Littlewood, J. E.},
	TITLE = {On {I}nequalities in the {T}heory of {F}unctions},
	JOURNAL = {Proc. London Math. Soc. (2)},
	FJOURNAL = {Proceedings of the London Mathematical Society. Second Series},
	VOLUME = {23},
	YEAR = {1925},
	NUMBER = {7},
	PAGES = {481--519},
	ISSN = {0024-6115},
	MRCLASS = {99-04},
	MRNUMBER = {1575208},
	DOI = {10.1112/plms/s2-23.1.481},
	URL = {https://doi.org/10.1112/plms/s2-23.1.481},
}

@article {MR3905527,
	AUTHOR = {Tien, Pham Trong and Khoi, Le Hai},
	TITLE = {Weighted composition operators between different {F}ock
	spaces},
	JOURNAL = {Potential Anal.},
	FJOURNAL = {Potential Analysis. An International Journal Devoted to the
	Interactions between Potential Theory, Probability Theory,
	Geometry and Functional Analysis},
	VOLUME = {50},
	YEAR = {2019},
	NUMBER = {2},
	PAGES = {171--195},
	ISSN = {0926-2601,1572-929X},
	MRCLASS = {30H20 (47B33)},
	MRNUMBER = {3905527},
	MRREVIEWER = {Maria\ Tjani},
	DOI = {10.1007/s11118-017-9678-y},
	URL = {https://doi.org/10.1007/s11118-017-9678-y},
}

@article {MR4811250,
	AUTHOR = {Yang, Zicong},
	TITLE = {The difference of weighted composition operators on {F}ock
	spaces},
	JOURNAL = {Monatsh. Math.},
	FJOURNAL = {Monatshefte f\"ur Mathematik},
	VOLUME = {205},
	YEAR = {2024},
	NUMBER = {3},
	PAGES = {667--680},
	ISSN = {0026-9255,1436-5081},
	MRCLASS = {47B33 (30H20 46E15)},
	MRNUMBER = {4811250},
	MRREVIEWER = {Trieu\ Le},
	DOI = {10.1007/s00605-024-02003-8},
	URL = {https://doi.org/10.1007/s00605-024-02003-8},
}

@article {MR2034214,
	AUTHOR = {Carswell, Brent J. and MacCluer, Barbara D. and Schuster,
	Alex},
	TITLE = {Composition operators on the {F}ock space},
	JOURNAL = {Acta Sci. Math. (Szeged)},
	FJOURNAL = {Acta Universitatis Szegediensis. Acta Scientiarum
	Mathematicarum},
	VOLUME = {69},
	YEAR = {2003},
	NUMBER = {3-4},
	PAGES = {871--887},
	ISSN = {0001-6969,2064-8316},
	MRCLASS = {47B33 (32A37)},
	MRNUMBER = {2034214},
	MRREVIEWER = {Miroslav\ Engli\v s},
}

@article {MR4384577,
	AUTHOR = {Han, Kaikai and Wang, Maofa},
	TITLE = {Slice regular weighted composition operators},
	JOURNAL = {Complex Var. Elliptic Equ.},
	FJOURNAL = {Complex Variables and Elliptic Equations. An International
	Journal},
	VOLUME = {67},
	YEAR = {2022},
	NUMBER = {1},
	PAGES = {162--223},
	ISSN = {1747-6933,1747-6941},
	MRCLASS = {30G35 (32A36 47B33)},
	MRNUMBER = {4384577},
	MRREVIEWER = {M.\ Elena\ Luna-Elizarrar\'as},
	DOI = {10.1080/17476933.2020.1818731},
	URL = {https://doi.org/10.1080/17476933.2020.1818731},
}

@book {MR1397026,
	AUTHOR = {Cowen, Carl C. and MacCluer, Barbara D.},
	TITLE = {Composition operators on spaces of analytic functions},
	SERIES = {Studies in Advanced Mathematics},
	PUBLISHER = {CRC Press, Boca Raton, FL},
	YEAR = {1995},
	PAGES = {xii+388},
	ISBN = {0-8493-8492-3},
	MRCLASS = {47B38 (30D55 46E15)},
	MRNUMBER = {1397026},
	MRREVIEWER = {John\ N.\ McDonald},
}

@book {MR1237406,
	AUTHOR = {Shapiro, Joel H.},
	TITLE = {Composition operators and classical function theory},
	SERIES = {Universitext: Tracts in Mathematics},
	PUBLISHER = {Springer-Verlag, New York},
	YEAR = {1993},
	PAGES = {xvi+223},
	ISBN = {0-387-94067-7},
	MRCLASS = {47B38 (30C99 46E20 46J15)},
	MRNUMBER = {1237406},
	MRREVIEWER = {Aristomenis\ Siskakis},
	DOI = {10.1007/978-1-4612-0887-7},
	URL = {https://doi.org/10.1007/978-1-4612-0887-7},
}

@article {MR4544164,
	AUTHOR = {Liu, Meicheng and Liang, Yuxia and Lian, Pan},
	TITLE = {Self-adjoint, unitary, and isometric weighted composition
	operators on quaternionic {F}ock space},
	JOURNAL = {Banach J. Math. Anal.},
	FJOURNAL = {Banach Journal of Mathematical Analysis},
	VOLUME = {17},
	YEAR = {2023},
	NUMBER = {2},
	PAGES = {Paper No. 24, 25},
	ISSN = {2662-2033,1735-8787},
	MRCLASS = {47B33 (30G35 30H20 47B38)},
	MRNUMBER = {4544164},
	MRREVIEWER = {Hai-Chou\ Li},
	DOI = {10.1007/s43037-023-00252-7},
	URL = {https://doi.org/10.1007/s43037-023-00252-7},
}

@article {MR3895397,
	AUTHOR = {Tien, Pham Trong and Khoi, Le Hai},
	TITLE = {Differences of weighted composition operators between the
	{F}ock spaces},
	JOURNAL = {Monatsh. Math.},
	FJOURNAL = {Monatshefte f\"ur Mathematik},
	VOLUME = {188},
	YEAR = {2019},
	NUMBER = {1},
	PAGES = {183--193},
	ISSN = {0026-9255,1436-5081},
	MRCLASS = {47B33 (46E15)},
	MRNUMBER = {3895397},
	MRREVIEWER = {Rita\ A.\ Hibschweiler},
	DOI = {10.1007/s00605-018-1179-6},
	URL = {https://doi.org/10.1007/s00605-018-1179-6},
}

@article {MR2957216,
	AUTHOR = {Constantin, Olivia},
	TITLE = {A {V}olterra-type integration operator on {F}ock spaces},
	JOURNAL = {Proc. Amer. Math. Soc.},
	FJOURNAL = {Proceedings of the American Mathematical Society},
	VOLUME = {140},
	YEAR = {2012},
	NUMBER = {12},
	PAGES = {4247--4257},
	ISSN = {0002-9939,1088-6826},
	MRCLASS = {30H20 (47B38)},
	MRNUMBER = {2957216},
	MRREVIEWER = {Sei-ichiro\ Ueki},
	DOI = {10.1090/S0002-9939-2012-11541-2},
	URL = {https://doi.org/10.1090/S0002-9939-2012-11541-2},
}

@article {MR3192295,
	AUTHOR = {Mengestie, Tesfa},
	TITLE = {Product of {V}olterra type integral and composition operators
	on weighted {F}ock spaces},
	JOURNAL = {J. Geom. Anal.},
	FJOURNAL = {Journal of Geometric Analysis},
	VOLUME = {24},
	YEAR = {2014},
	NUMBER = {2},
	PAGES = {740--755},
	ISSN = {1050-6926,1559-002X},
	MRCLASS = {30H20 (46E22)},
	MRNUMBER = {3192295},
	MRREVIEWER = {Jordi\ Pau},
	DOI = {10.1007/s12220-012-9353-x},
	URL = {https://doi.org/10.1007/s12220-012-9353-x},
}

@article{lin,
    author       = {Lin, Zhaopeng and Lu, Yufeng and Zu, Chao},
    title        = {Toeplitz Operators on Quaternionic Fock Spaces},
    year         = {2026},
    eprint       = {2601.10162},
    archivePrefix= {arXiv},
    primaryClass = {math.FA},
    note         = {arXiv preprint, version v3, last revised 4 Jun 2026},
    url          = {https://doi.org/10.48550/arXiv.2601.10162}
}

@article {MR2819157,
	AUTHOR = {Hu, Zhangjian and Lv, Xiaofen},
	TITLE = {Toeplitz operators from one {F}ock space to another},
	JOURNAL = {Integral Equations Operator Theory},
	FJOURNAL = {Integral Equations and Operator Theory},
	VOLUME = {70},
	YEAR = {2011},
	NUMBER = {4},
	PAGES = {541--559},
	ISSN = {0378-620X,1420-8989},
	MRCLASS = {47B35 (32A37 46E15 47B10)},
	MRNUMBER = {2819157},
	MRREVIEWER = {Edgar\ Tchoundja},
	DOI = {10.1007/s00020-011-1887-y},
	URL = {https://doi.org/10.1007/s00020-011-1887-y},
}

@incollection {MR3587897,
	AUTHOR = {Alpay, Daniel and Colombo, Fabrizio and Sabadini, Irene and
	Salomon, Guy},
	TITLE = {The {F}ock space in the slice hyperholomorphic setting},
	BOOKTITLE = {Hypercomplex analysis: new perspectives and applications},
	SERIES = {Trends Math.},
	PAGES = {43--59},
	PUBLISHER = {Birkh\"auser/Springer, Cham},
	YEAR = {2014},
	ISBN = {978-3-319-08770-2; 978-3-319-08771-9},
	MRCLASS = {30G35 (30H20)},
	MRNUMBER = {3587897},
}

@book {colombo2016entire,
	AUTHOR = {Colombo, Fabrizio and Sabadini, Irene and Struppa, Daniele C.},
	TITLE = {Entire slice regular functions},
	SERIES = {SpringerBriefs in Mathematics},
	PUBLISHER = {Springer, Cham},
	YEAR = {2016},
	PAGES = {v+118},
	ISBN = {978-3-319-49264-3; 978-3-319-49265-0},
	MRCLASS = {30G35 (30B10 30D15)},
	MRNUMBER = {3585395},
	MRREVIEWER = {Alessandro\ Perotti},
	DOI = {10.1007/978-3-319-49265-0},
	URL = {https://doi.org/10.1007/978-3-319-49265-0},
}

@book {MR4292259,
	AUTHOR = {Alpay, Daniel and Colombo, Fabrizio and Sabadini, Irene},
	TITLE = {Quaternionic de {B}ranges spaces and characteristic operator
	function},
	SERIES = {SpringerBriefs in Mathematics},
	PUBLISHER = {Springer, Cham},
	YEAR = {[2020] \copyright 2020},
	PAGES = {x+116},
	ISBN = {978-3-030-38311-4; 978-3-030-38312-1},
	MRCLASS = {47-02 (30G35 46C20 46E22 46S05 47S05)},
	MRNUMBER = {4292259},
	MRREVIEWER = {Lu\'is\ P.\ Castro},
	DOI = {10.1007/978-3-030-38312-1},
	URL = {https://doi.org/10.1007/978-3-030-38312-1},
}

@article {MR173012,
	AUTHOR = {Cullen, C. G.},
	TITLE = {An integral theorem for analytic intrinsic functions on
	quaternions},
	JOURNAL = {Duke Math. J.},
	FJOURNAL = {Duke Mathematical Journal},
	VOLUME = {32},
	YEAR = {1965},
	PAGES = {139--148},
	ISSN = {0012-7094,1547-7398},
	MRCLASS = {30.83},
	MRNUMBER = {173012},
	MRREVIEWER = {R.\ F.\ Rinehart},
	URL = {http://projecteuclid.org/euclid.dmj/1077375642},
}

@article {MR3239622,
	AUTHOR = {Le, Trieu},
	TITLE = {Normal and isometric weighted composition operators on the
	{F}ock space},
	JOURNAL = {Bull. Lond. Math. Soc.},
	FJOURNAL = {Bulletin of the London Mathematical Society},
	VOLUME = {46},
	YEAR = {2014},
	NUMBER = {4},
	PAGES = {847--856},
	ISSN = {0024-6093,1469-2120},
	MRCLASS = {47B33},
	MRNUMBER = {3239622},
	MRREVIEWER = {Sei-ichiro\ Ueki},
	DOI = {10.1112/blms/bdu046},
	URL = {https://doi.org/10.1112/blms/bdu046},
}

@article {MR3801294,
	AUTHOR = {de Fabritiis, Chiara and Gentili, Graziano and Sarfatti,
	Giulia},
	TITLE = {Quaternionic {H}ardy spaces},
	JOURNAL = {Ann. Sc. Norm. Super. Pisa Cl. Sci. (5)},
	FJOURNAL = {Annali della Scuola Normale Superiore di Pisa. Classe di
	Scienze. Serie V},
	VOLUME = {18},
	YEAR = {2018},
	NUMBER = {2},
	PAGES = {697--733},
	ISSN = {0391-173X,2036-2145},
	MRCLASS = {30G35 (30H10)},
	MRNUMBER = {3801294},
}

@article {MR2353257,
	AUTHOR = {Gentili, Graziano and Struppa, Daniele C.},
	TITLE = {A new theory of regular functions of a quaternionic variable},
	JOURNAL = {Adv. Math.},
	FJOURNAL = {Advances in Mathematics},
	VOLUME = {216},
	YEAR = {2007},
	NUMBER = {1},
	PAGES = {279--301},
	ISSN = {0001-8708,1090-2082},
	MRCLASS = {30G35 (30B10 30C10)},
	MRNUMBER = {2353257},
	MRREVIEWER = {Alessandro\ Perotti},
	DOI = {10.1016/j.aim.2007.05.010},
	URL = {https://doi.org/10.1016/j.aim.2007.05.010},
}

@article {MR4162404,
	AUTHOR = {Lian, Pan and Liang, Yuxia},
	TITLE = {Weighted composition operator on quaternionic {F}ock space},
	JOURNAL = {Banach J. Math. Anal.},
	FJOURNAL = {Banach Journal of Mathematical Analysis},
	VOLUME = {15},
	YEAR = {2021},
	NUMBER = {1},
	PAGES = {Paper No. 7, 20},
	ISSN = {2662-2033,1735-8787},
	MRCLASS = {30G35 (30H20 47B33 47B38)},
	MRNUMBER = {4162404},
	MRREVIEWER = {Matthew\ M.\ Jones},
	DOI = {10.1007/s43037-020-00087-6},
	URL = {https://doi.org/10.1007/s43037-020-00087-6},
}

@article {MR4519267,
	AUTHOR = {Liang, Y. and Wang, J.},
	TITLE = {Difference of quaternionic weighted composition operators on
	slice regular {F}ock spaces},
	JOURNAL = {Complex Var. Elliptic Equ.},
	FJOURNAL = {Complex Variables and Elliptic Equations. An International
	Journal},
	VOLUME = {68},
	YEAR = {2023},
	NUMBER = {1},
	PAGES = {120--134},
	ISSN = {1747-6933,1747-6941},
	MRCLASS = {30G35 (30H20 47B38)},
	MRNUMBER = {4519267},
	DOI = {10.1080/17476933.2021.1980877},
	URL = {https://doi.org/10.1080/17476933.2021.1980877},
}

@article {MR34064751,
	AUTHOR = {Colombo, Fabrizio and Gonz\'alez-Cervantes, J. Oscar and
	Sabadini, Irene},
	TITLE = {Further properties of the {B}ergman spaces of slice regular
	functions},
	JOURNAL = {Adv. Geom.},
	FJOURNAL = {Advances in Geometry},
	VOLUME = {15},
	YEAR = {2015},
	NUMBER = {4},
	PAGES = {469--484},
	ISSN = {1615-715X,1615-7168},
	MRCLASS = {30G35 (30H20)},
	MRNUMBER = {3406475},
	MRREVIEWER = {John\ Ryan},
	DOI = {10.1515/advgeom-2015-0022},
	URL = {https://doi.org/10.1515/advgeom-2015-0022},
}

@book {MR2934601,
	AUTHOR = {Zhu, Kehe},
	TITLE = {Analysis on {F}ock spaces},
	SERIES = {Graduate Texts in Mathematics},
	VOLUME = {263},
	PUBLISHER = {Springer, New York},
	YEAR = {2012},
	PAGES = {x+344},
	ISBN = {978-1-4419-8800-3},
	MRCLASS = {30H20 (30D15 46E20 47B10 47B35)},
	MRNUMBER = {2934601},
	MRREVIEWER = {Jordi\ Pau},
	DOI = {10.1007/978-1-4419-8801-0},
	URL = {https://doi.org/10.1007/978-1-4419-8801-0},
}

@article {MR3358083,
	AUTHOR = {Arcozzi, Nicola and Sarfatti, Giulia},
	TITLE = {Invariant metrics for the quaternionic {H}ardy space},
	JOURNAL = {J. Geom. Anal.},
	FJOURNAL = {Journal of Geometric Analysis},
	VOLUME = {25},
	YEAR = {2015},
	NUMBER = {3},
	PAGES = {2028--2059},
	ISSN = {1050-6926,1559-002X},
	MRCLASS = {30G35 (46E22 58B20)},
	MRNUMBER = {3358083},
	MRREVIEWER = {Jin-xun\ Wang},
	DOI = {10.1007/s12220-014-9503-4},
	URL = {https://doi.org/10.1007/s12220-014-9503-4},
}

@book {MR3013643,
	AUTHOR = {Gentili, Graziano and Stoppato, Caterina and Struppa, Daniele
	C.},
	TITLE = {Regular functions of a quaternionic variable},
	SERIES = {Springer Monographs in Mathematics},
	PUBLISHER = {Springer, Heidelberg},
	YEAR = {2013},
	PAGES = {x+185},
	ISBN = {978-3-642-33870-0; 978-3-642-33871-7},
	MRCLASS = {30-02 (30B10 30C15 30C80 30E20 30G35)},
	MRNUMBER = {3013643},
	MRREVIEWER = {Alessandro\ Perotti},
	DOI = {10.1007/978-3-642-33871-7},
	URL = {https://doi.org/10.1007/978-3-642-33871-7},
}

	\medskip
	
	\noindent
	School of Mathematical Sciences, Dalian University of Technology,
	Dalian, Liaoning 116024, P. R. China
	
	\noindent
	Email address: \texttt{zhaopeng.lin@mail.dlut.edu.cn}
	
	\medskip
	
	\noindent
	School of Mathematical Sciences, Dalian University of Technology,
	Dalian, Liaoning 116024, P. R. China
	
	\noindent
	Email address: \texttt{lyfdlut@dlut.edu.cn}
	
	\medskip
	
	\noindent
	School of Mathematical Sciences, Dalian University of Technology,
	Dalian, Liaoning 116024, P. R. China
	
	\noindent
	Email address: \texttt{zuchao@dlut.edu.cn}

 \end{document}